\newtheorem{theo}{Theorem}[section]
\newtheorem{proposition}[theo]{Proposition}
\newtheorem{lemma}[theo]{Lemma}
\newtheorem{corollary}[theo]{Corollary}
\newtheorem{rem}[theo]{Remark}
\begin{document}

\title{On Exact Triangles Consisting of Stable Vector Bundles on Tori}

\author{Kazushi Kobayashi\footnote{Department of Mathematics and Informatics, Graduate School of Science, Chiba University, Yayoicho 1-33, Inage, Chiba, 263-8522 Japan. E-mail : afka9031@chiba-u.jp}}

\date{}

\maketitle

\begin{abstract}
In this paper, we consider the exact triangles consisting of stable vector bundles on one-dimensional complex tori, and give a geometric interpretation of them in terms of the corresponding Fukaya category via the homological mirror symmetry. 
\end{abstract}

\tableofcontents

\section{Introduction}
Mirror symmetry is a symmetry between symplectic geometry and complex geometry, and homological mirror symmetry conjectured by M. Kontsevich \cite{K} is one of the most important conjectures as a mathematical formulation of mirror symmetry. The homological mirror symmetry is an equivalence between triangulated categories which are obtained by the Fukaya categories (a Fukaya category is an $A_{\infty }$ category) \cite{Fuk} and the derived categories of coherent sheaves (a derived category is a triangulated category). In this mathematical formulation, a certain triangulated category is constructed from an $A_{\infty }$ category in order to compare the Fukaya category with the derived category of coherent sheaves. Here, exact triangles in a triangulated category obtained by this construction are defined as exact triangles associated to mapping cones. Thus, the notion of a mapping cone is one of the fundamental tools in the formulation of the homological mirror symmetry. 

One of the most fundamental examples of mirror pairs is a pair $(T^2,\check{T}^2)$ of tori. In particular, for an affine Lagrangian submanifold $\mathcal{L}_{\frac{a}{n}}$, i.e., a line of rational slope $\frac{a}{n} \in \mathbb{Q}$ on a covering space of a symplectic torus $T^2$, we can define a holomorphic vector bundle $E(\mathcal{L}_{\frac{a}{n}})$ whose rank and degree are $n$ and $a$, respectively on the mirror dual complex torus $\check{T}^2$ of $T^2$. Note that those holomorphic vector bundles are stable. There are many studies of the homological mirror symmetry for tori (\cite{P}, \cite{Fuk t}, \cite{P A}, \cite{abouzaid} etc.). In this paper, we discuss the structures of exact triangles consisting of those stable vector bundles in the triangulated category.

In the homological mirror symmetry setting, we usually consider the derived category of coherent sheaves in the complex geometry side, but in the present paper we instead consider a DG-category consisting of holomorphic vector bundles corresponding to Lagrangian submanifolds as in \cite{dg}, \cite{abouzaid}. Then, as expressed above, we can construct a triangulated category from this DG-category. For two stable vector bundles $E(\mathcal{L}_{\frac{a}{n}})$ and $E(\mathcal{L}_{\frac{b}{m}})$, we assume dim$\mathrm{Ext}^1(E(\mathcal{L}_{\frac{b}{m}}),E(\mathcal{L}_{\frac{a}{n}}))=1$. For a non-trivial morphism $\psi \in \mathrm{Ext}^1(E(\mathcal{L}_{\frac{b}{m}}),E(\mathcal{L}_{\frac{a}{n}}))$, there exists the short exact sequence
\begin{equation*}
\begin{CD}
0@>>>E(\mathcal{L}_{\frac{a}{n}})@>>>C(\psi )@>>>E(\mathcal{L}_{\frac{b}{m}})@>>>0.
\end{CD}
\end{equation*}
Here, $C(\psi )$ denotes the mapping cone of $\psi $, and we see that $C(\psi )$ is stable (see \cite{Abelian}). Then, we obtain the following exact triangle consisting of stable vector bundles, where $T$ is the shift functor in the triangulated category.
\begin{equation}
\begin{CD}
\cdots E(\mathcal{L}_{\frac{a}{n}})@>>>C(\psi )@>>>E(\mathcal{L}_{\frac{b}{m}})@>\psi >>TE(\mathcal{L}_{\frac{a}{n}})\cdots. \label{tori}
\end{CD}
\end{equation}
On the other hand, the isomorphism classes of indecomposable holomorphic vector bundles over an elliptic curve are classified by Atiyah \cite{atiyah}. This Atiyah's result implies that the set of isomorphism classes of indecomposable holomorphic vector bundles are parametrized by $\mu \in \check{T}^2=\mathbb{C}/2\pi (\mathbb{Z}\oplus \tau \mathbb{Z})$, where $\tau \in \mathbb{H}$. So we denote by $E(\mathcal{L}_{\frac{a}{n}})_{\mu }$ the representative of the isomorphism class [$E(\mathcal{L}_{\frac{a}{n}})_{\mu }$] of $E(\mathcal{L}_{\frac{a}{n}})$ corresponding to $\mu $. Note that this $E(\mathcal{L}_{\frac{a}{n}})_{\mu }$ corresponds to $E_{(\frac{a}{n},\frac{\mu }{n})}$ in the body of this paper. Now we turn to the case of the exact triangle $(\ref{tori})$. We see that $C(\psi )$ is a holomorphic vector bundle whose rank and degree are $m+n$ and $a+b$, respectively, so we expect that there exists a $\mu \in \check{T}^2$ such that $[C(\psi )]=[E(\mathcal{L}_{\frac{a+b}{m+n}})_{\mu }]$ by Atiyah's result. In this paper, we determine the value $\mu $ such that $[C(\psi )]=[E(\mathcal{L}_{\frac{a+b}{m+n}})_{\mu }]$ in the case dim$\mathrm{Ext}^1(E(\mathcal{L}_{\frac{b}{m}}),E(\mathcal{L}_{\frac{a}{n}}))=1$ as the most fundamental example of exact triangles such that $\psi \in \mathrm{Ext}^1(E(\mathcal{L}_{\frac{b}{m}}),E(\mathcal{L}_{\frac{a}{n}}))$ is non-trivial, and construct an isomorphism $C(\psi )\cong E(\mathcal{L}_{\frac{a+b}{m+n}})_{\mu }$ explicitly by employing theta functions. In particular, the Lagrangian submanifolds corresponding to $C(\psi )$ intersect at one point, and they become a single Lagrangian submanifold corresponding to $E(\mathcal{L}_{\frac{a+b}{m+n}})_{\mu }$ by an isomorphism $C(\psi )\cong E(\mathcal{L}_{\frac{a+b}{m+n}})_{\mu }$. We expect that this can be regarded as an analogue of the Dehn twist (see \cite{pic-lef}, \cite{abouzaid}). Furthermore, we give a geometric interpretation of the exact triangles consisting of stable vector bundles in terms of the corresponding Fukaya category via the homological mirror symmetry.

This paper is organized as follows. In section 2, we define holomorphic vector bundles associated to Lagrangian submanifolds. In section 3, we discuss the DG-category consisting of those holomorphic vector bundles, and comment the stability of them. In section 4, we discuss the structures of an exact triangle consisting of stable vector bundles $(\ref{tori})$ in the case dim$\mathrm{Ext}^1(E(\mathcal{L}_{\frac{b}{m}}),E(\mathcal{L}_{\frac{a}{n}}))=1$, where $\psi $ is non-trivial. Then, without loss of generality we may discuss the case $(n,a)=(1,0)$, $(m,b)=(1,1)$ only, because $E(\mathcal{L}_{\frac{a}{n}})$ is associated to $\mathcal{L}_{\frac{a}{n}}$, and $\mathcal{L}_{\frac{a}{n}}$ is transformed by the $SL(2;\mathbb{Z})$ action on $T^2$. We explain details of this fact in section 6. So in section 4, we consider two stable line bundles $E(\mathcal{L}_0)$ and $E(\mathcal{L}_1)$, and we take the mapping cone of $\psi : E(\mathcal{L}_1)\rightarrow TE(\mathcal{L}_0)$. Then $C(\psi )$ is a holomorphic vector bundle whose rank and degree are 2 and 1, respectively. Hence, we determine the value $\mu $ such that $[C(\psi )]=[E(\mathcal{L}_{\frac{1}{2}})_{\mu }]$, and in particular, for $E(\mathcal{L}_{\frac{1}{2}})_{\mu }$, compute non-trivial morphisms $\phi : C(\psi )\rightarrow E(\mathcal{L}_{\frac{1}{2}})_{\mu }$ and $\tilde{\phi } : E(\mathcal{L}_{\frac{1}{2}})_{\mu }\rightarrow C(\psi )$ explicitly. In fact, we can check $\phi \tilde{\phi } =c\cdot id_{E(\mathcal{L}_{\frac{1}{2}})_{\mu }}$ and $\tilde{\phi }\phi =c\cdot id_{C(\psi )}$ with a complex number $c\not=0$, and which implies $C(\psi )\cong E(\mathcal{L}_{\frac{1}{2}})_{\mu }$. In these arguments, theta functions play an important role. The value $\mu $ such that $[C(\psi )]=[E(\mathcal{L}_{\frac{1}{2}})_{\mu }]$ is given in Theorem \ref{the4.9}. In section 5, we discuss a geometric interpretation of the mapping cone $C(\psi )$ from the viewpoint of the corresponding symplectic geometry. In particular, for the isomorphisms $\phi $, $\tilde{\phi } $, we interpret the value $c$ as the structure constant of an $A_{\infty }$ product in the corresponding Fukaya category. In section 6, we explain the $SL(2;\mathbb{Z})$ action on $T^2$.

\section{Holomorphic vector bundles and Lagrangian submanifolds}
In this section, we define Lagrangian submanifolds on $T^2$, and define holomorphic vector bundles associated to those Lagrangian submanifolds on the dual torus $\check{T}^2$ of $T^2$. Here, $\check{T}^2$ is a complex torus. These are based on the SYZ construction \cite{syz} (see also \cite{Le}). 

First, we explain $\check{T}^2$. Let us denote the coordinates of the covering space $\mathbb{R}^2$ of $\check{T}^2$ by $(x,y)$. We also regard $(x,y)$ as a point of $\check{T}^2$ by identifying $x\sim x+2\pi $ and $y\sim y+2\pi $. We fix an $\varepsilon _0>0$ small enough and define
\begin{flalign*}
&O_{ij}:=\biggl\{(x,y)\in \check{T}^2 \ \mid \ \frac{2}{3}\pi (i -1)-\varepsilon _0<x<\frac{2}{3}\pi i +\varepsilon _0, &\\
&\hspace{53mm}\frac{2}{3}\pi (j -1)-\varepsilon _0<y<\frac{2}{3}\pi j +\varepsilon _0,\ \varepsilon _0>0\biggr\}, &
\end{flalign*}
where $i,j=1,2,3$. We can regard $O_{ij}$ as a open set of $\mathbb{R}^2$, and we define the local coordinates of $O_{ij}$ by $(x,y)\in \mathbb{R}^2$. Furthermore, we define the complex coordinate of $\check{T}^2$ as follows. Let $\tau $ be a complex number which satisfies Im$\tau>0$, and for $\check{T}^2$, we locally define the complex coordinate by $z=x+\tau y$. Namely, for the lattice generated by 1 and $\tau $, $\check{T}^2$ is isomorphic to $\mathbb{C}/2\pi (\mathbb{Z}\oplus \tau \mathbb{Z})$. 

On the other hand, we consider the dual symplectic torus $(T^2,\omega )$ whose complexified symplectic form $\omega $ is defined by
\begin{equation*}
\omega :=-\frac{1}{\tau }dx\wedge dy
\end{equation*}
as the mirror pair of $\check{T}^2\cong \mathbb{C}/2\pi (\mathbb{Z}\oplus \tau \mathbb{Z})$. We also denote the local coordinates of $(T^2,\omega )$ by the same notation $(x,y)$ since it may not cause any confusion. We define a map $s_{(\frac{a}{n},\frac{\mu }{n})} : \mathbb{R}\rightarrow \mathbb{C}$ by
\begin{equation*}
s_{(\frac{a}{n},\frac{\mu }{n})}(x):=\frac{a}{n}x+\frac{\mu }{n}.
\end{equation*}
Here, we assume $n\in \mathbb{N}$ and $a\in \mathbb{Z}$ are relatively prime and $\mu \in \mathbb{C}$. We denote $\mu =p+q\tau $ with $p$, $q\in \mathbb{R}$. Removing the term $\frac{q}{n}\tau $ from the above $s_{(\frac{a}{n},\frac{\mu }{n})}(x)$, the graph of 
\begin{equation*}
y=\frac{a}{n}x+\frac{p}{n}
\end{equation*}
defines a Lagrangian submanifold $\mathcal{L}_{(\frac{a}{n},\frac{p}{n})}$ in $\mathbb{R}^2$. Then, for the covering map $\pi : \mathbb{R}^2\rightarrow T^2$, the image $\pi (\mathcal{L}_{(\frac{a}{n},\frac{p}{n})})$ is a cycle which winds $n$ times in the base space direction and $a$ times in the fiber direction. This $\pi (\mathcal{L}_{(\frac{a}{n},\frac{p}{n})})$ is an example of a (special) Lagrangian submanifold in $(T^2,\omega )$. Hereafter, for simplicity, we denote by $\mathcal{L}_{(\frac{a}{n},\frac{p}{n})}$ a Lagrangian submanifold in $(T^2,\omega )$ instead of $\pi (\mathcal{L}_{(\frac{a}{n},\frac{p}{n})})$, too. Here, we explain the term $\frac{q}{n}\tau $ in the formula of $s_{(\frac{a}{n},\frac{\mu }{n})}$ briefly. In the homological mirror symmetry setting, we consider the Fukaya category in the symplectic geometry side (see section 5). Note that an object of the Fukaya category is a Lagrangian submanifold $\mathcal{L}$ endowed with a local system $(\mathscr{L},\nabla_{\mathscr{L}})$. Now, for $\mathcal{L}_{(\frac{a}{n},\frac{p}{n})}$, we consider a flat complex line bundle $\mathscr{L}_{(\frac{a}{n},\frac{\mu }{n})}\rightarrow \mathcal{L}_{(\frac{a}{n},\frac{p}{n})}$ whose $U(1)$-connection $\nabla_{\mathscr{L}_{(\frac{a}{n},\frac{\mu }{n})}}$ is defined by
\begin{equation*}
\nabla_{\mathscr{L}_{(\frac{a}{n},\frac{\mu }{n})}}:=d-\frac{\mathbf{i}}{2\pi } \frac{q}{n} dx,
\end{equation*}
where $\mathbf{i}=\sqrt{-1}$ and $d$ denotes the exterior derivative. Then, the number $q$ corresponds to the $U(1)$ holonomy of $(\mathscr{L}_{(\frac{a}{n},\frac{\mu }{n})},\nabla_{\mathscr{L}_{(\frac{a}{n},\frac{\mu }{n})}})$ along $\mathcal{L}_{(\frac{a}{n},\frac{p}{n})}\cong S^1$. In this sense, we call the number $q$ the $U(1)$ holonomy or simply the holonomy. Thus, giving a map $s_{(\frac{a}{n},\frac{\mu }{n})}$ is equivalent to giving an object $(\mathcal{L}_{(\frac{a}{n},\frac{p}{n})},\mathscr{L}_{(\frac{a}{n},\frac{\mu }{n})},\nabla_{\mathscr{L}_{(\frac{a}{n},\frac{\mu }{n})}})$ of the Fukaya category $\mathrm{Fuk}(T^2,\omega )$.

For $s_{(\frac{a}{n},\frac{\mu }{n})}$, we can associate the following holomorphic vecter bundle $E_{(\frac{a}{n},\frac{\mu }{n})}$ whose rank and degree are $n$ and $a$, respectively, on $\check{T}^2$. Let $U$ and $V$ be following square matrices of order $n$.
\begin{equation*}
U:=\left(\begin{array}{cccc}1&&&\\&\omega &&\\&&\ddots&\\&&&\omega ^{n-1}\end{array}\right),\ V:=\left(\begin{array}{cccc}0\ \ 1&&\\&\ddots\ddots&\\&&1\\1&&0\end{array}\right).
\end{equation*}
Here, $\omega $ is the $n$-th root of 1. Using these matrices, the transition functions of $E_{(\frac{a}{n},\frac{\mu }{n})}$ are defined as follows. Let $\psi _{(i,j)} : O_{ij} \rightarrow \mathbb{C}^n$ be a smooth section of $E_{(\frac{a}{n},\frac{\mu }{n})}$, where $i,j=1,2,3$. We define the transition function on $O_{3j}\cap O_{1j}$ by 
\begin{equation*}
\left.\psi _{(3,j)}\right|_{O_{3j}\cap O_{1j}}=\displaystyle{e^{\frac{a}{n}\mathbf{i}y}V\left.\psi _{(1,j)}\right|_{O_{3j}\cap O_{1j}}}.
\end{equation*}
Similarly, we define the transition function on $O_{i3}\cap O_{i1}$ by
\begin{equation*}
\left.\psi _{(i,3)}\right|_{O_{i3}\cap O_{i1}}=U^{-a}\left.\psi _{(i,1)}\right|_{O_{i3}\cap O_{i1}},
\end{equation*}
where $U^{-a}:=(U^{-1})^a$. Note that the transition functions which are defined on $O_{1j}\cap O_{2j}$, $O_{2j}\cap O_{3j}$, $O_{i1}\cap O_{i2}$ and $O_{i2}\cap O_{i3}$ are trivial. Moreover, when we define 
\begin{equation*}
\left.\psi _{(3,3)}\right|_{O_{33}\cap O_{11}}=U^{-a}\left.\psi _{(3,1)}\right|_{O_{33}\cap O_{11}}=\left(U^{-a}\right)\left(e^{\frac{a}{n}\mathbf{i}y}V\right)\left.\psi _{(1,1)}\right|_{O_{33}\cap O_{11}},
\end{equation*}
we can check that they satisfy the cocycle condition. We define a connection on $E_{(\frac{a}{n},\frac{\mu }{n})}$ locally as 
\begin{equation*}
D=d+A:=d-\frac{\mathbf{i}}{2\pi }s_{(\frac{a}{n},\frac{\mu }{n})}(x)dy\cdot I_n=d-\frac{\mathbf{i}}{2\pi }\left(\frac{a}{n}x+\frac{\mu }{n}\right)dy\cdot I_n,
\end{equation*}
where $I_n$ is the identity matrix of order $n$. In fact, $D$ is compatible with the transition functions and so defines a global connection. Strictly speaking, we should denote by $D_{(\frac{a}{n},\frac{\mu }{n})}$ a connection of $E_{(\frac{a}{n},\frac{\mu }{n})}$, but we denote by $D$ a connection of $E_{(\frac{a}{n},\frac{\mu }{n})}$ here, because we do not use the notation $D$ so much in this paper. Exceptionally, sometimes we denote $D_{\frac{a}{n}}$ instead of $D$ in section 3. Then its curvature form $F$ is 
\begin{equation*}
F=-\frac{\mathbf{i}}{2\pi }\frac{a}{n}dx\wedge dy\cdot I_n.
\end{equation*}
Since $\check{T}^2=\mathbb{C}/2\pi (\mathbb{Z}\oplus \tau \mathbb{Z})$ is a 1-dimensional complex manifold, the (0,2)-part of this curvature form vanishes automatically. Thus, for a complex vector bundle $E_{(\frac{a}{n},\frac{\mu }{n})}$ of rank $n$, $D$ defines the structure of a holomorphic vector bundle. In particular, the case $(n,a)=(1,0)$ with $\mu \in 2\pi n\mathbb{Z}$ corresponds to the trivial line bundle in this definition. Since
\begin{equation*}
\displaystyle{\int_{\check{T}^2}c_1(E_{(\frac{a}{n},\frac{\mu }{n})})=\frac{a}{4\pi ^2}\int_{\check{T}^2}dx\wedge dy=\frac{a}{4\pi ^2}4\pi ^2=a},
\end{equation*}
the degree of $E_{(\frac{a}{n},\frac{\mu }{n})}$ is $a$. Here, $c_1(E_{(\frac{a}{n},\frac{\mu }{n})})$ denotes the first chern class of $E_{(\frac{a}{n},\frac{\mu }{n})}$.
\begin{rem}
All diagonal components of $A$ are defined by $-\frac{\mathbf{i}}{2\pi }(\frac{a}{n}x+\frac{\mu }{n})dy$ in order to maintain the compatibility of the connection 1-form with respect to the transition functions. In fact, when we define the $(i,i)$ components of a connection 1-form $\tilde{A}\ (1\leq i\leq n)$ by $-\frac{\mathbf{i}}{2\pi }(\frac{a}{n}x+\frac{\mu_{ii} }{n})dy$, the relation $\mu _{11}=\cdots =\mu _{nn}$ holds by the condition such that $\tilde{A}$ is compatible with the transition functions.
\end{rem}
In general, a smooth section $\psi (x,y)$ of $E_{(\frac{a}{n},\frac{\mu }{n})}$ is expressed locally as follows. For each $x\in S^1$, $\psi (x,\cdot )$ gives a smooth function on the fiber $S^1$. Thus, $\psi (x,y)$ can be Fourier-expanded locally as 
\begin{equation*}
\psi (x,y)=\left(\begin{array}{ccc}\displaystyle{\sum_{I_1\in \mathbb{Z}}}\psi _{\lambda ,I_1}(x)e^{\frac{\mathbf{i}}{n}I_1y}\\\vdots\\\displaystyle{\sum_{I_n\in \mathbb{Z}}}\psi _{\lambda ,I_n}(x)e^{\frac{\mathbf{i}}{n}I_ny}\end{array}\right). 
\end{equation*}

\section{The DG-category consisting of holomorphic vector bundles}
In this section, we construct a DG-category $DG_{\check{T}^2}$ consisting of holomorphic vector bundles defined in section 2. This is an extension of the DG-category of holomorphic line bundles in \cite{line}. The objects of $DG_{\check{T}^2}$ are holomorphic vector bundles $E_{(\frac{a}{n},\frac{\mu }{n})}$ with $U(n)$-connections $D$. Hereafter sometimes we denote $D_{\frac{a}{n}}$ instead of $D$ in order to specify that $D$ is associated to $E_{(\frac{a}{n},\frac{\mu }{n})}$. We often label these objects as $s_{(\frac{a}{n},\frac{\mu }{n})}$ instead of $(E_{(\frac{a}{n},\frac{\mu }{n})},D_{\frac{a}{n}})$, because $E_{(\frac{a}{n},\frac{\mu }{n})}$ is associated to $s_{(\frac{a}{n},\frac{\mu }{n})}$. For any two objects $s_{(\frac{a}{n},\frac{\mu }{n})}=(E_{(\frac{a}{n},\frac{\mu }{n})},D_{\frac{a}{n}})$, $s_{(\frac{b}{m},\frac{\nu }{m})}=(E_{(\frac{b}{m},\frac{\nu }{m})},D_{\frac{b}{m}})\in Ob(DG_{\check{T}^2})$, the space $DG_{\check{T}^2}(s_{(\frac{a}{n},\frac{\mu }{n})},s_{(\frac{b}{m},\frac{\nu }{m})})$ of morphisms is defined by
\begin{equation*}
DG_{\check{T}^2}(s_{(\frac{a}{n},\frac{\mu }{n})},s_{(\frac{b}{m},\frac{\nu }{m})}):=\Gamma (E_{(\frac{a}{n},\frac{\mu }{n})},E_{(\frac{b}{m},\frac{\nu }{m})})\otimes _{C^{\infty}(\check{T}^2)}\Omega ^{0,\ast }(\check{T}^2),
\end{equation*}
where $\Omega ^{0,\ast }(\check{T}^2)$ is the space of anti-holomorphic differential forms, and $\Gamma (E_{(\frac{a}{n},\frac{\mu }{n})},$ \\ $E_{(\frac{b}{m},\frac{\nu }{m})})$ is the space of homomorphisms from $E_{(\frac{a}{n},\frac{\mu }{n})}$ to $E_{(\frac{b}{m},\frac{\nu }{m})}$. The space $DG_{\check{T}^2}(s_{(\frac{a}{n},\frac{\mu }{n})},s_{(\frac{b}{m},\frac{\nu }{m})})$ is a $\mathbb{Z}$-graded vector space, where the grading is defined as the degree of the anti-holomorphic differential forms. The degree $r$ part is denoted $DG^r_{\check{T}^2}(s_{(\frac{a}{n},\frac{\mu }{n})},s_{(\frac{b}{m},\frac{\nu }{m})})$. We define a linear map $d_{(\frac{a}{n},\frac{b}{m})} : DG^r_{\check{T}^2}(s_{(\frac{a}{n},\frac{\mu }{n})},s_{(\frac{b}{m},\frac{\nu }{m})})\rightarrow DG^{r+1}_{\check{T}^2}(s_{(\frac{a}{n},\frac{\mu }{n})},s_{(\frac{b}{m},\frac{\nu }{m})})$ as follows. We decompose $D_{\frac{a}{n}}$ into its holomorphic part and anti-holomorphic part $D_{\frac{a}{n}}=D^{(1,0)}_{\frac{a}{n}}+D^{(0,1)}_{\frac{a}{n}}$, and set $d_{\frac{a}{n}}:=2D^{(0,1)}_{\frac{a}{n}}$. Then, for $\psi \in DG^r_{\check{T}^2}(s_{(\frac{a}{n},\frac{\mu }{n})},s_{(\frac{b}{m},\frac{\nu }{m})})$, we set 
\begin{equation*}
d_{(\frac{a}{n},\frac{b}{m})}(\psi ):=d_{\frac{b}{m}}\psi -(-1)^r\psi d_{\frac{a}{n}}.
\end{equation*}
Furthermore, for any $\psi \in DG^r_{\check{T}^2}(s_{(\frac{a}{n},\frac{\mu }{n})},s_{(\frac{b}{m},\frac{\nu }{m})})$, $d^2_{(\frac{a}{n},\frac{b}{m})}(\psi )$ is expressed as follows.
\begin{align*}
d^2_{(\frac{a}{n},\frac{b}{m})}(\psi )&=d_{(\frac{a}{n},\frac{b}{m})}(d_{\frac{b}{m}}\psi -(-1)^r\psi d_{\frac{a}{n}})\\
                                             &=d^2_{\frac{b}{m}}\psi -\psi d^2_{\frac{a}{n}}.
\end{align*}
Here, $d^2_{\frac{a}{n}}=0$ because $d_{\frac{a}{n}}=2D^{(0,1)}_{\frac{a}{n}}$ and $E_{(\frac{a}{n},\frac{\mu }{n})}$ is holomorphic. Similarly one has $d_{\frac{b}{m}}^2=0$. Thus $d^2_{(\frac{a}{n},\frac{b}{m})}(\psi )=0$, so $d_{(\frac{a}{n},\frac{b}{m})}$ defines a differential. Sometimes we denote by $H^r(E_{(\frac{a}{n},\frac{\mu }{n})},E_{(\frac{b}{m},\frac{\nu }{m})})$ the $r$-th cohomology with respect to the differential $d_{(\frac{a}{n},\frac{b}{m})}$, and in particular, $H^0(E_{(\frac{a}{n},\frac{\mu }{n})},E_{(\frac{b}{m},\frac{\nu }{m})})$ is the space of holomorphic maps. Furthermore, when $s_{(\frac{a}{n},\frac{\mu }{n})}$ is the zero section, $s_{(\frac{a}{n},\frac{\mu }{n})}=s_{(0,0)}$, the differential $d_{(0,\frac{b}{m})} : DG^r_{\check{T}^2}(s_{(0,0)},s_{(\frac{b}{m},\frac{\nu }{m})})\rightarrow DG^{r+1}_{\check{T}^2}(s_{(0,0)},s_{(\frac{b}{m},\frac{\nu }{m})})$ is also denoted $d_{\frac{b}{m}}:=d_{(0,\frac{b}{m})}$. The product structure $m : DG_{\check{T}^2}(s_{(\frac{b}{m},\frac{\nu }{m})},s_{(\frac{c}{l},\frac{\eta }{l})})\otimes DG_{\check{T}^2}(s_{(\frac{a}{n},\frac{\mu }{n})},s_{(\frac{b}{m},\frac{\nu }{m})})\rightarrow DG_{\check{T}^2}(s_{(\frac{a}{n},\frac{\mu }{n})},s_{(\frac{c}{l},\frac{\eta }{l})})$ is defined by the composition of homomorphisms of vector bundles together with the wedge product for the anti-holomorphic differential forms. We can check that $d_{(\frac{a}{n},\frac{b}{m})}$ and $m$ satisfy the Leibniz rule. Thus, $DG_{\check{T}^2}$ forms a DG-category.

For later convenience, we give the local expression of $d_{(\frac{a}{n},\frac{b}{m})}$ explicitly. Since $z=x+\tau y$ and $\bar{z}=x+\bar{\tau }y$, one has
\begin{equation*}
dx=-\frac{\bar{\tau }}{\tau -\bar{\tau }}dz+\frac{\tau }{\tau -\bar{\tau }}d\bar{z},\ \ dy=\frac{1}{\tau -\bar{\tau }}(dz-d\bar{z}).
\end{equation*}
Using these, we decompose $D_{\frac{a}{n}}=d-\frac{\mathbf{i}}{2\pi }s_{(\frac{a}{n},\frac{\mu }{n})}(x)dy\cdot I_n$ into its holomorphic part and anti-holomorphic part.
\begin{align*}
D_{\frac{a}{n}}&=d-\frac{\mathbf{i}}{2\pi }s_{(\frac{a}{n},\frac{\mu }{n})}(x)dy\cdot I_n\\
&=\partial +\frac{\mathbf{i}}{2\pi (\bar{\tau } -\tau)} s_{(\frac{a}{n},\frac{\mu }{n})}(x)dz\cdot I_n+\bar{\partial }-\frac{\mathbf{i}}{2\pi (\bar{\tau } -\tau)} s_{(\frac{a}{n},\frac{\mu }{n})}(x)d\bar{z}\cdot I_n.
\end{align*}
Since $d_{\frac{a}{n}}=2D^{(0,1)}_{\frac{a}{n}}$, so $d_{\frac{a}{n}}=2\bar{\partial }-\frac{\mathbf{i}}{\pi (\bar{\tau }-\tau )}s_{(\frac{a}{n},\frac{\mu }{n})}(x)d\bar{z}\cdot I_n$. Thus, for any $\psi \in DG^r_{\check{T}^2}(s_{(\frac{a}{n},\frac{\mu }{n})},s_{(\frac{b}{m},\frac{\nu }{m})})$, $d_{(\frac{a}{n},\frac{b}{m})}(\psi )$ is expressed locally as 
\begin{equation*}
d_{(\frac{a}{n},\frac{b}{m})}(\psi )=2\bar{\partial }(\psi ) -\frac{\mathbf{i}}{\pi (\bar{\tau }-\tau )}(s_{(\frac{b}{m},\frac{\nu }{m})}(x)-s_{(\frac{a}{n},\frac{\mu }{n})}(x))d\bar{z} \cdot I_n\wedge \psi.
\end{equation*}

On the other hand, for two objects $s_{(\frac{a}{n},\frac{\mu }{n})},s_{(\frac{b}{m},\frac{\nu }{m})}\in Ob(DG_{\check{T}^2})$, $DG^0_{\check{T}^2}(s_{(\frac{a}{n},\frac{\mu }{n})},$ \\ 
$s_{(\frac{b}{m},\frac{\nu }{m})})$ is the space of sections of $E_{(\frac{b}{m},\frac{\nu }{m} )}$ if $s_{(\frac{a}{n},\frac{\mu }{n})}$ is the zero section, $s_{(\frac{a}{n},\frac{\mu }{n})}=s_{(0,0)}$. Furthermore, in section 2, we saw that smooth sections of $E_{(\frac{a}{n},\frac{\mu }{n})}$ can be Fourier-expanded locally. In fact, any morphism $\psi \in DG^r_{\check{T}^2}(s_{(\frac{a}{n},\frac{\mu }{n})},s_{(\frac{b}{m},\frac{\nu }{m})})$ can be Fourier-expanded locally in a similar way.

Recall that $H^0(E_{(\frac{a}{n},\frac{\mu }{n})},E_{(\frac{b}{m},\frac{\nu }{m})})$ is the space of holomorphic maps. For two holomorphic vector bundles whose ranks and degrees are same, i.e., $(n,a)=(m,b)$, the following proposition holds.
\begin{proposition}\label{pro3.1}
Let n be a natural number and a an integer. We assume n and a are relatively prime. Then for $\mu $ and $\nu \ (\mu ,\nu \in \mathbb{C})$, $\mathrm{dim}H^0(E_{(\frac{a}{n},\frac{\mu }{n})},E_{(\frac{a}{n},\frac{\nu }{n})})$ \\
$=1$ if and only if $\mu \equiv \nu \ (\mathrm{mod}\ 2\pi (\mathbb{Z} \oplus \tau \mathbb{Z}))$, and, $\mathrm{dim}H^0(E_{(\frac{a}{n},\frac{\mu }{n})},E_{(\frac{a}{n},\frac{\nu }{n})})=0$ otherwise. Furthermore, in the case of $\mathrm{dim}H^0(E_{(\frac{a}{n},\frac{\mu }{n})},E_{(\frac{a}{n},\frac{\nu }{n})})=1$, a non-trivial element in $H^0(E_{(\frac{a}{n},\frac{\mu }{n})},E_{(\frac{a}{n},\frac{\nu }{n})})$ gives an isomorphism $E_{(\frac{a}{n},\frac{\mu }{n})}\cong E_{(\frac{a}{n},\frac{\nu }{n})}$.
\end{proposition}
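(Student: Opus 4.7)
The plan is to reduce the computation of $H^0$ to the classification of entire functions on $\mathbb{C}$ satisfying two constant multiplicative quasi-periodicity laws. Using the explicit local formula for $d_{(\frac{a}{n},\frac{a}{n})}$ derived just above, and noting that the source and target bundles have the same slope $\frac{a}{n}$ so that the $x$-dependent parts of the two connections cancel, a matrix-valued morphism $\psi$ is holomorphic precisely when each entry satisfies the constant-coefficient PDE $\partial_{\bar z}\psi^{(k,l)} = \alpha\,\psi^{(k,l)}$, where $\alpha := \frac{\mathbf{i}(\nu-\mu)}{2\pi n(\bar{\tau}-\tau)}$. The general solution is $\psi^{(k,l)}(z,\bar z)=e^{\alpha\bar z}\,h^{(k,l)}(z)$ with $h^{(k,l)}$ entire.

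Next I would impose compatibility with the transition functions on the universal cover, which act on $\psi$ by conjugation since source and target share the same transitions (so the scalar factors $e^{\frac{a}{n}\mathbf{i}y}$ cancel). Conjugation by $V$ produces the cyclic relation $h^{(k+1,l+1)}(z)=e^{2\pi\alpha}\,h^{(k,l)}(z+2\pi)$, so the whole matrix is determined by the $n$ representatives $h_d(z):=h^{(1,1+d)}(z)$ for $d=0,\dots,n-1$, together with the closure $h_d(z+2\pi n)=e^{-2\pi n\alpha}\,h_d(z)$. Conjugation by $U^{-a}$ produces $h_d(z+2\pi\tau)=\omega^{ad}\,e^{-2\pi\alpha\bar{\tau}}\,h_d(z)$. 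Hence each $h_d$ is a holomorphic section of a flat line bundle on $\mathbb{C}/(2\pi n\mathbb{Z}\oplus 2\pi\tau\mathbb{Z})$, and a short Fourier-series argument (after setting $f(z)=e^{-\lambda z}h_d(z)$ with $e^{\lambda\cdot 2\pi n}=e^{-2\pi n\alpha}$ and using that $\tau/n\notin\mathbb{R}$, so the phases $e^{2\pi\mathbf{i}m\tau/n}$ are distinct) shows that such a bundle admits a nonzero, one-dimensional space of sections, necessarily of the form $Ce^{\lambda z}$, precisely when it is trivial; after substituting the value of $\alpha$, this existence criterion becomes the congruence $\nu-\mu\in 2\pi(ad+n\mathbb{Z})+2\pi\tau\mathbb{Z}$.

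Finally, the hypothesis $\gcd(a,n)=1$ makes $ad$ range over all of $\mathbb{Z}/n\mathbb{Z}$ as $d=0,\dots,n-1$, so the union of these congruences is exactly $\nu-\mu\in 2\pi(\mathbb{Z}\oplus\tau\mathbb{Z})$, while conversely for any such $\nu-\mu$ there is a unique $d\pmod n$ satisfying it. This delivers $\dim H^0=1$ in the congruence case and $0$ otherwise. For the isomorphism statement I would invoke stability: both bundles are stable of the same slope $\frac{a}{n}$, so any nonzero morphism between them is automatically an isomorphism; alternatively, one can reassemble the representatives $h_d$ explicitly and check directly that the resulting $\psi$ is everywhere invertible. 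The main obstacle I expect is the combinatorial bookkeeping linking the cyclic $V$-action on matrix entries to the parameter $d$, and verifying cleanly that coprimality makes the $n$ partial congruences tile the lattice $2\pi(\mathbb{Z}\oplus\tau\mathbb{Z})$ exactly once.
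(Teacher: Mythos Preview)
Your approach is correct and reaches the same conclusion as the paper, but the packaging is genuinely different. The paper works in real coordinates: it Fourier-expands each entry $\Phi_{ij}$ in $y$, solves the resulting first-order ODE in $x$ for each Fourier mode, and then chases the constants $C_{ij,I_{ij}}$ through the chain of relations coming from the $V$-transition in the $x$-direction until it reaches the fixed-point equation $C_{1l,I_{1l}}=e^{\frac{\mathbf{i}}{\tau}(\nu-\mu-2\pi nI_{1l}-2\pi a(l-1))}C_{1l,I_{1l}}$. Your route instead exploits the complex coordinate from the start: you factor out $e^{\alpha\bar z}$ so that what remains is an entire function with two \emph{constant} multipliers, and then you recognise this as the section problem for a degree-zero line bundle on the $n$-fold cover $\mathbb{C}/(2\pi n\mathbb{Z}\oplus 2\pi\tau\mathbb{Z})$. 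This is cleaner conceptually and makes the role of $\gcd(a,n)=1$ transparent (it forces the $n$ cosets $ad+n\mathbb{Z}$ to tile $\mathbb{Z}$); the paper's version, by contrast, produces the explicit matrix form of $\Phi$ as a by-product, which it then uses directly.

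One caveat: your preferred argument for the isomorphism assertion---``both bundles are stable of the same slope, so any nonzero morphism is an isomorphism''---is circular in this paper's logical order. Stability of $E_{(\frac{a}{n},\frac{\mu}{n})}$ is deduced \emph{from} Proposition~\ref{pro3.1} via Corollary~\ref{cor3.2} (simplicity $\Rightarrow$ indecomposable $\Rightarrow$ stable, using the coprimality criterion). So you must take your own alternative: reassemble $\psi$ from the single nonzero $h_d$ and observe that the resulting matrix is a cyclic permutation matrix (up to nonvanishing scalar entries), hence invertible everywhere. This is exactly what the paper does at the end of its proof.
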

\begin{proof}
We look for a non-trivial holomorphic map $\Phi \in H^0(E_{(\frac{a}{n},\frac{\mu }{n})},E_{(\frac{a}{n},\frac{\nu }{n})})$. By definition, rank$E_{(\frac{a}{n},\frac{\mu }{n})}$=rank$E_{(\frac{a}{n},\frac{\nu }{n})}$=$n$, so $\Phi $ is expressed locally as 
\begin{equation*}
\Phi =\left(\begin{array}{ccc}\Phi _{11}(x,y)\cdots\Phi _{1n}(x,y)\\\vdots \ \ \ \ \ \ \ddots \ \ \ \ \ \ \vdots\\\Phi _{n1}(x,y)\cdots\Phi _{nn}(x,y)\end{array}\right).
\end{equation*}
As discussed in section 2, the transition function for a smooth section $\psi _{(i,j)} : O_{ij} \rightarrow \mathbb{C}^n$ ($i,j=1,2,3$) of $E_{(\frac{a}{n},\frac{\mu }{n})}$ is defined by 
\begin{equation*}
\left.\psi _{(3,j)}\right|_{O_{3j}\cap O_{1j}}=\displaystyle{e^{\frac{a}{n}\mathbf{i}y}V\left.\psi _{(1,j)}\right|_{O_{3j}\cap O_{1j}}},\ \left.\psi _{(i,3)}\right|_{O_{i3}\cap O_{i1}}=U^{-a}\left.\psi _{(i,1)}\right|_{O_{i3}\cap O_{i1}},
\end{equation*}
where the transition functions on $O_{1j}\cap O_{2j}$, $O_{2j}\cap O_{3j}$, $O_{i1}\cap O_{i2}$ and $O_{i2}\cap O_{i3}$ are trivial. The same thing holds also for $E_{(\frac{a}{n},\frac{\nu }{n})}$. Then $\Phi _{ij}(x,y)\ (1\leq i,j\leq n)$ can be Fourier-expanded as 
\begin{equation*}
\Phi _{ij}(x,y)=e^{\frac{j-i}{n}a\mathbf{i}y}\displaystyle{\sum_{I_{ij}\in \mathbb{Z}}}\Phi _{ij,I_{ij}}(x)e^{\mathbf{i}I_{ij}y},
\end{equation*}
by considering the transition functions of $E_{(\frac{a}{n},\frac{\mu }{n})}$ and $E_{(\frac{a}{n},\frac{\nu }{n})}$ in the $y$ direction. Here, we consider the holomorphic structures of $E_{(\frac{a}{n},\frac{\mu }{n})}$ and $E_{(\frac{a}{n},\frac{\nu }{n})}$. The holomorphic structure of $E_{(\frac{a}{n},\frac{\mu }{n})}$ is defined by
\begin{equation*}
d_{\frac{a}{n}}:=\left(2\bar{\partial } -\frac{\mathbf{i}}{\pi (\bar{\tau } -\tau )}\left(\frac{a}{n}x+\frac{\mu }{n}\right)d\bar{z} \right)\cdot I_n,
\end{equation*}
and similarly, the holomorphic structure of $E_{(\frac{a}{n},\frac{\nu }{n})}$ is defined by
\begin{equation*}
d'_{\frac{a}{n}}:=\left(2\bar{\partial } -\frac{\mathbf{i}}{\pi (\bar{\tau } -\tau )}\left(\frac{a}{n}x+\frac{\nu }{n}\right)d\bar{z} \right)\cdot I_n.
\end{equation*}
Since $\Phi \in H^0(E_{(\frac{a}{n},\frac{\mu }{n})},E_{(\frac{a}{n},\frac{\nu }{n})})=\mathrm{Ker}\{d_{(\frac{a}{n},\frac{a}{n})} : DG^0_{\check{T}^2}(s_{(\frac{a}{n},\frac{\mu }{n})},s_{(\frac{a}{n},\frac{\nu }{n})})\rightarrow DG^1_{\check{T}^2}$\\
$(s_{(\frac{a}{n},\frac{\mu }{n})},s_{(\frac{a}{n},\frac{\nu }{n})})\}$, $\Phi $ satisfies the differential equation $d_{(\frac{a}{n},\frac{a}{n})}(\Phi )=d'_{\frac{a}{n}}\Phi -\Phi d_{\frac{a}{n}}=0$. Hence $\Phi _{ij}(x,y)\ (1\leq i,j\leq n)$ is given by 
\begin{equation*}
\Phi _{ij}(x,y)=e^{\frac{j-i}{n}a\mathbf{i}y}\displaystyle{\sum_{I_{ij}\in \mathbb{Z}}}C_{ij,I_{ij}}e^{-\frac{\mathbf{i}}{\pi n\tau }(\frac{\nu }{2}-\frac{\mu }{2}-\pi nI_{ij}-\pi a(j-i))x}e^{\mathbf{i}I_{ij}y},
\end{equation*}
where $C_{ij,I_{ij}}$ is an arbitrary constant. On the other hand, we obtain the following relations for any $i,j\ (1\leq i,j\leq n)$,
\begin{equation*}
\Phi _{ij}(x+2\pi ,y)=\Phi _{(i+1)(j+1)}(x,y),
\end{equation*}
by considering the transition functions of $E_{(\frac{a}{n},\frac{\mu }{n})}$ and $E_{(\frac{a}{n},\frac{\nu }{n})}$ in the $x$ direction. Namely, for each $l=1,\cdots,n$, the following relations hold.
\begin{align*}
&\Phi _{1l}(x+2\pi ,y)=\Phi _{2(l+1)}(x,y),\Phi _{2(l+1)}(x+2\pi ,y)=\Phi _{3(l+2)}(x,y),\cdots,\\
&\Phi _{(n-l)(n-1)}(x+2\pi ,y)=\Phi _{(n-l+1)n}(x,y),\Phi _{(n-l+1)n}(x+2\pi ,y)=\Phi _{(n-l+2)1}(x,y),\\
&\Phi _{(n-l+2)1}(x+2\pi ,y)=\Phi _{(n-l+3)2}(x,y),\cdots,\Phi _{(n-1)(l-2)}(x+2\pi ,y)=\Phi _{n(l-1)}(x,y),\\
&\Phi _{n(l-1)}(x+2\pi ,y)=\Phi _{1l}(x,y).
\end{align*}
These relations imply that the arbitrary constants satisfy the following relations.
\begin{align*}
&C_{1l,I_{1l}}e^{-\frac{\mathbf{i}}{n\tau }(\nu -\mu -2\pi nI_{1l}-2\pi a(l-1))}=C_{2(l+1),I_{1l}},\\
&C_{2(l+1),I_{2(l+1)}}e^{-\frac{\mathbf{i}}{n\tau }(\nu -\mu -2\pi nI_{2(l+1)}-2\pi a(l-1))}=C_{3(l+2),I_{2(l+1)}},\cdots,\\
&C_{(n-l)(n-1),I_{(n-l)(n-1)}}e^{-\frac{\mathbf{i}}{n\tau }(\nu -\mu -2\pi nI_{(n-l)(n-1)}-2\pi a(l-1))}=C_{(n-l+1)n,I_{(n-l)(n-1)}},\\
&C_{(n-l+1)n,I_{(n-l+1)n}}e^{-\frac{\mathbf{i}}{n\tau }(\nu -\mu -2\pi nI_{(n-l+1)n}-2\pi a(l-1))}=C_{(n-l+2)1,I_{(n-l+1)n}+a},\\
&C_{(n-l+2)1,I_{(n-l+2)1}+a}e^{-\frac{\mathbf{i}}{n\tau }(\nu -\mu -2\pi nI_{(n-l+2)1}-2\pi a(l-1))}=C_{(n-l+3)2,I_{(n-l+2)1}+a},\cdots,\\
&C_{(n-1)(l-2),I_{(n-l)(l-2)}+a}e^{-\frac{\mathbf{i}}{n\tau }(\nu -\mu -2\pi nI_{(n-l)(l-2)}-2\pi a(l-1))}=C_{n(l-1),I_{(n-l)(l-2)}+a},\\
&C_{n(l-1),I_{n(l-1)}+a}e^{-\frac{\mathbf{i}}{n\tau }(\nu -\mu -2\pi nI_{n(l-1)}-2\pi a(l-1))}=C_{1l,I_{n(l-1)}}.
\end{align*}
Hence we obtain 
\begin{equation*}
C_{1l,I_{1l}}=e^{\frac{\mathbf{i}}{\tau }(\nu -\mu -2\pi nI_{1l}-2\pi a(l-1))}C_{1l,I_{1l}}.
\end{equation*}
Here, if $e^{\frac{\mathbf{i}}{\tau }(\nu -\mu -2\pi nI_{1l}-2\pi a(l-1))}\not=1$ then $C_{1l,I_{1l}}=0\ (I_{1l}\in \mathbb{Z})$, so we consider the case $e^{\frac{\mathbf{i}}{\tau }(\nu -\mu -2\pi nI_{1l}-2\pi a(l-1))}=1$. We set that $\mu =p+q\tau $ and $\nu =p'+q'\tau \ (p,p',q,q'\in \mathbb{R})$, and one has
\begin{equation*}
1=e^{\frac{\mathbf{i}}{\tau }(\nu -\mu -2\pi nI_{1l}-2\pi a(l-1))}=e^{\frac{\mathbf{i}}{\tau }(p'-p-2\pi nI_{1l}-2\pi a(l-1))}e^{\mathbf{i}(q'-q)}.
\end{equation*}
Therefore $p,\ p',\ q,\ q'$ must satisfy the conditions $p'-p=2\pi (nI_{1l}+a(l-1))$, $q\equiv q'\ (\mathrm{mod}\ 2\pi \mathbb{Z})$. In particular, if we fix a value $l$ (clearly, there are $n$ ways how to fix a value $l$), we see that $\ C_{1l,I_{1l}}=c\in \mathbb{C}\ (c\not=0)$ for an $I_{1l}\in \mathbb{Z}$, and $C_{1l,I}=0$ for any $I\not=I_{1l}\ (I\in \mathbb{Z})$. Thus, when we set $2\pi k:=q'-q\ (k\in \mathbb{Z})$, 
\begin{align*}
&C_{2(l+1),I_{1l}}=c\omega ^{-k},\cdots,C_{(n-l+1)n,I_{1l}}=c\omega ^{-(n-l)k},\\
&C_{(n-l+2)1,I_{1l}+a}=c\omega ^{-(n-l+1)k},\cdots,C_{n(l-1),I_{1l}+a}=c\omega ^{-(n-1)k},
\end{align*}
and $C_{2(l+1),I}=\cdots=C_{(n-l+1)n,I}=C_{(n-l+2)1,I+a}=\cdots=C_{n(l-1),I+a}=0$ for any $I\not=I_{1l}\ (I\in \mathbb{Z})$. Here, $n$ and $a$ are relatively prime and $I_{1l}\in \mathbb{Z}$, so $p\equiv p'\ (\mathrm{mod}\ 2\pi \mathbb{Z})$ holds. Furthermore, all other components of $\Phi $ are zero by the conditions of transition functions in the $x$ direction for the values $l'\not=l,\ l'=1,\cdots,n$. Thus, for $\mu $ and $\nu $ ($\mu $, $\nu \in \mathbb{C}$), the condition $E_{(\frac{a}{n},\frac{\mu }{n})}\cong E_{(\frac{a}{n},\frac{\nu }{n})}$ is equivalent to the condition $\mu \equiv \nu \ (\mathrm{mod}\ 2\pi (\mathbb{Z}\oplus \tau \mathbb{Z}))$, and when $p'-p=2\pi (nI_{1l}+a(l-1))\ (I_{1l}\in \mathbb{Z})$ and $q'-q=2\pi k\ (k\in \mathbb{Z})$, the non-trivial holomorphic map $\Phi : E_{(\frac{a}{n},\frac{\mu }{n})}\rightarrow E_{(\frac{a}{n},\frac{\nu }{n})}$ is expressed locally as
\begin{align*}
&\Phi =e^{-\frac{k}{n}\mathbf{i}x+\mathbf{i}I_{1l}y+\frac{l-1}{n}a\mathbf{i}y}\left(\begin{array}{ccc}0&\Phi _1\\\Phi _2&0\end{array}\right),\\
&\Phi _1:=\left(\begin{array}{ccc}c&&\\&\ddots&\\&&c\omega ^{-(n-l)k}\end{array}\right),\ \Phi _2:=\left(\begin{array}{ccc}c\omega ^{-(n-l+1)k}&&\\&\ddots&\\&&c\omega ^{-(n-1)k}\end{array}\right),
\end{align*}
where $\Phi _1$ and $\Phi _2$ are square matrices of order $n-l+1$ and $l-1$, respectively ($c\in \mathbb{C}$, $c\not=0$). Since $\Phi $ has its inverse, it is an isomorphism.
\end{proof}
Thus, the isomorphism $\Phi : E_{(\frac{a}{n},\frac{\mu }{n})}\rightarrow E_{(\frac{a}{n},\frac{\nu }{n})}$ belongs to $H^0(E_{(\frac{a}{n},\frac{\mu }{n})},E_{(\frac{a}{n},\frac{\nu }{n})})$. By Proposition \ref{pro3.1}, we obtain the following corollary.
\begin{corollary}\label{cor3.2}
For $E_{(\frac{a}{n},\frac{\mu }{n})}$, a holomorphic map $\Phi : E_{(\frac{a}{n},\frac{\mu }{n})}\rightarrow E_{(\frac{a}{n},\frac{\mu }{n})}$ is expressed locally as $\Phi =cI_n$, where $c\in \mathbb{C}$.
\end{corollary}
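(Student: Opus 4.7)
The plan is to reduce this immediately to the already-proved Proposition \ref{pro3.1}. Applying that proposition in the special case $\nu = \mu$, the congruence $\mu \equiv \mu \pmod{2\pi (\mathbb{Z} \oplus \tau \mathbb{Z})}$ is trivially satisfied, so $\dim H^0(E_{(\frac{a}{n},\frac{\mu }{n})}, E_{(\frac{a}{n},\frac{\mu }{n})}) = 1$. Since the identity map $I_n$ is manifestly a non-trivial holomorphic endomorphism of $E_{(\frac{a}{n},\frac{\mu }{n})}$, it furnishes a basis of this one-dimensional space. Consequently, every holomorphic map $\Phi : E_{(\frac{a}{n},\frac{\mu }{n})} \to E_{(\frac{a}{n},\frac{\mu }{n})}$ must be a scalar multiple of $I_n$, which is exactly the claim.

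If one prefers to see this from the explicit local form produced in the proof of Proposition \ref{pro3.1}, the plan is to specialize the parameters there to $\nu = \mu$. Writing $\mu = p + q\tau$ and $\nu = p' + q'\tau$, the conditions $p' - p = 2\pi(nI_{1l} + a(l-1))$ and $q' - q = 2\pi k$ collapse to $k = 0$ and $nI_{1l} + a(l-1) = 0$. Since $\gcd(n,a) = 1$, the second equation forces $n \mid (l-1)$, and because $1 \le l \le n$ this pins down $l = 1$ and $I_{11} = 0$. Plugging these back into the explicit formula, the off-diagonal block $\Phi_2$ has order $l - 1 = 0$ and thus disappears, while $\Phi_1$ is a diagonal matrix of order $n$ whose entries $c\omega^{-jk}$ all reduce to $c$ because $k = 0$. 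Hence $\Phi = c I_n$ locally, confirming the corollary.

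There is no substantive obstacle: the content of the corollary is already embedded in Proposition \ref{pro3.1}, and the only thing to check is that the identity endomorphism realizes the unique nontrivial direction guaranteed by that proposition. The slightly less trivial bookkeeping -- verifying that the combinatorial constraints from the transition functions collapse precisely to $l=1$, $k=0$, $I_{11}=0$ when $\mu = \nu$ -- is routine once one has the explicit formula from the proof of Proposition \ref{pro3.1} in hand.
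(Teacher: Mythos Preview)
Your proposal is correct and matches the paper's own proof, which simply says to apply Proposition \ref{pro3.1} in the case $\mu=\nu$, where $l=1$, $I_{11}=0$ and $k=0$. Your second paragraph fills in exactly the bookkeeping the paper leaves implicit, and your first paragraph gives a clean abstract shortcut (identifying $I_n$ as the basis vector of the one-dimensional space) that reaches the same conclusion without unpacking the explicit formula.
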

\begin{proof}
We consider in the Proposition \ref{pro3.1} in the case $\mu =\nu $, where $l=1$, $I_{11}=0$ and $k=0$.
\end{proof}
Hence $E_{(\frac{a}{n},\frac{\mu }{n})}$ is simple, so it is indecomposable. In fact, it is known that an indecomposable vector bundle $E$ on an elliptic curve is stable if and only if the rank of $E$ and the degree of $E$ are relatively prime (see \cite{Abelian}, $\mathrm{p}$.178). Thus $E_{(\frac{a}{n},\frac{\mu }{n})}$ is stable.

Moreover, for $E_{(\frac{a}{n},\frac{\mu }{n})}$ and $E_{(\frac{b}{m},\frac{\nu }{m})}$ with $(n,a)\not=(m,b)$, the following proposition is known (see \cite{Abelian}, $\mathrm{p}$.179).
\begin{proposition}\label{pro3.3}
For $E_{(\frac{a}{n},\frac{\mu }{n})}$ and $E_{(\frac{b}{m},\frac{\nu }{m})}$, if $bn-am>0$ then $\mathrm{dim}H^0(E_{(\frac{a}{n},\frac{\mu }{n})},$\\
$E_{(\frac{b}{m},\frac{\nu }{m})})=bn-am$ and $\mathrm{dim}H^1(E_{(\frac{a}{n},\frac{\mu }{n})},E_{(\frac{b}{m},\frac{\nu }{m})})=0$, if $bn-am<0$ then $\mathrm{dim}H^0(E_{(\frac{a}{n},\frac{\mu }{n})},E_{(\frac{b}{m},\frac{\nu }{m})})=0$ and $\mathrm{dim}H^1(E_{(\frac{a}{n},\frac{\mu }{n})},E_{(\frac{b}{m},\frac{\nu }{m})})=am-bn$. 
\end{proposition}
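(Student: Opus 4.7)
The plan is to combine Riemann--Roch on the elliptic curve $\check{T}^2$ with the stability of $E_{(\frac{a}{n},\frac{\mu }{n})}$ and $E_{(\frac{b}{m},\frac{\nu }{m})}$ established immediately before this proposition. Conceptually, the Euler characteristic $\dim H^0-\dim H^1$ is determined by ranks and degrees alone, while comparison of slopes forces exactly one of $H^0$, $H^1$ to vanish.

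First, I would identify the DG-category cohomologies $H^r(E_{(\frac{a}{n},\frac{\mu }{n})},E_{(\frac{b}{m},\frac{\nu }{m})})$ with the sheaf-theoretic groups $\mathrm{Ext}^r(E_{(\frac{a}{n},\frac{\mu }{n})},E_{(\frac{b}{m},\frac{\nu }{m})})$. This is immediate because the differential $d_{(\frac{a}{n},\frac{b}{m})}$ is, up to the factor of $2$, the Dolbeault operator $\bar{\partial}$ on the holomorphic bundle $\mathcal{H}om(E_{(\frac{a}{n},\frac{\mu }{n})},E_{(\frac{b}{m},\frac{\nu }{m})})\cong E_{(\frac{a}{n},\frac{\mu }{n})}^{\vee}\otimes E_{(\frac{b}{m},\frac{\nu }{m})}$, so its cohomology computes coherent cohomology.

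Next, Hirzebruch--Riemann--Roch on $\check{T}^2$, which is an elliptic curve (genus $1$, trivial canonical bundle), yields $\chi(\mathcal{F})=\deg\mathcal{F}$ for any coherent sheaf $\mathcal{F}$. The bundle $E_{(\frac{a}{n},\frac{\mu }{n})}^{\vee}\otimes E_{(\frac{b}{m},\frac{\nu }{m})}$ has rank $nm$ and degree $nb+(-a)m=bn-am$, so
\begin{equation*}
\dim H^0(E_{(\frac{a}{n},\frac{\mu }{n})},E_{(\frac{b}{m},\frac{\nu }{m})})-\dim H^1(E_{(\frac{a}{n},\frac{\mu }{n})},E_{(\frac{b}{m},\frac{\nu }{m})})=bn-am.
\end{equation*}
Assume $bn-am>0$, equivalently $\frac{a}{n}<\frac{b}{m}$. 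Since both bundles are stable and any nonzero morphism from a stable bundle to a stable bundle of strictly smaller slope must vanish, $H^0(E_{(\frac{b}{m},\frac{\nu }{m})},E_{(\frac{a}{n},\frac{\mu }{n})})=0$. Serre duality on the elliptic curve (with $K_{\check{T}^2}\cong\mathcal{O}$) then gives
\begin{equation*}
H^1(E_{(\frac{a}{n},\frac{\mu }{n})},E_{(\frac{b}{m},\frac{\nu }{m})})\cong H^0(E_{(\frac{b}{m},\frac{\nu }{m})},E_{(\frac{a}{n},\frac{\mu }{n})})^{\ast}=0,
\end{equation*}
and the Euler characteristic computation above yields $\dim H^0=bn-am$. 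The case $bn-am<0$ is handled symmetrically: stability gives $H^0(E_{(\frac{a}{n},\frac{\mu }{n})},E_{(\frac{b}{m},\frac{\nu }{m})})=0$ directly, and Riemann--Roch delivers $\dim H^1=am-bn$.

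The main obstacle is packaging the stability input correctly: one needs the standard slope-stability fact that between stable bundles a nonzero morphism forces $\mu(\mathrm{source})\leq\mu(\mathrm{target})$, together with the observation from the paragraph after Corollary \ref{cor3.2} that the relative primality assumptions on $(n,a)$ and $(m,b)$ force both bundles to be stable in the first place. An alternative, more hands-on route would mimic the Fourier-expansion computation of Proposition \ref{pro3.1} directly, but this appears substantially longer without gaining conceptual clarity, so I would prefer the Riemann--Roch / Serre duality approach above.
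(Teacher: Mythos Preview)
Your argument is correct. The paper does not give a full proof of this proposition; it cites Polishchuk's book and offers only a one-sentence sketch: compute $\dim H^0(E_{(\frac{a}{n},\frac{\mu}{n})},E_{(\frac{b}{m},\frac{\nu}{m})})=bn-am$ ``by a direct calculation'' (i.e.\ the Fourier-expansion method of Proposition~\ref{pro3.1}), and then read off $\dim H^1=0$ from Riemann--Roch.

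Your route is genuinely different in the order of the ingredients. The paper's sketch establishes the nonvanishing space first by explicit construction and uses Riemann--Roch only to kill the other one; you instead use Riemann--Roch to fix the Euler characteristic and then invoke slope stability together with Serre duality (with $K_{\check{T}^2}\cong\mathcal{O}$) to force the appropriate vanishing. The paper's approach has the advantage of producing explicit bases for $H^0$ in terms of theta-like sections, which is exactly what is exploited later in Section~4; your approach is cleaner and shorter, and---as you note---avoids a lengthy computation, at the cost of not yielding those explicit generators. Both are standard and valid; the alternative you mention at the end is precisely what the paper has in mind.
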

In Proposition 3.3, if $bn-am>0$, then we see dim$H^0(E_{(\frac{a}{n},\frac{\mu }{n})},E_{(\frac{b}{m},\frac{\nu }{m})})=bn-am$ by a direct calculation, so one has dim$H^1(E_{(\frac{a}{n},\frac{\mu }{n})},E_{(\frac{b}{m},\frac{\nu }{m})})=0$ by Riemann-Roch theorem. We can also prove in the case of $bn-am<0$ similarly. The arguments of Proposition 3.3 correspond to the discussions of slope stability for $E_{(\frac{a}{n},\frac{\mu }{n})}$ and $E_{(\frac{b}{m},\frac{\nu }{m})}$ (see $\cite{Abelian}$). By Proposition 3.3, for $E_{(\frac{a}{n},\frac{\mu }{n})}$ and $E_{(\frac{b}{m},\frac{\nu }{m})}$ with $(n,a)\not=(m,b)$, either $H^0(E_{(\frac{a}{n},\frac{\mu }{n})},E_{(\frac{b}{m},\frac{\nu }{m})})$ or $H^0(E_{(\frac{b}{m},\frac{\nu }{m})},E_{(\frac{a}{n},\frac{\mu }{n})})$ is zero. Thus, when $(n,a)\not=(m,b)$, $E_{(\frac{a}{n},\frac{\mu }{n})}$ is not isomorphic to $E_{(\frac{b}{m},\frac{\nu }{m})}$.

\section{The construction of the isomorphism}
In this section, we construct the mapping cone of a morphism between holomorphic vector bundles on $\check{T}^2$, and discuss the structures of an exact triangle associated to the mapping cone  
\begin{equation*}
\begin{CD}
\cdots E_{(\frac{a}{n},\frac{\mu }{n})}@>>>C([\psi] )@>>>E_{(\frac{b}{m},\frac{\nu }{m})}@>[\psi ]>>TE_{(\frac{a}{n},\frac{\mu }{n})} \cdots
\end{CD}
\end{equation*}
in the case dim$\mathrm{Ext}^1(E_{(\frac{b}{m},\frac{\nu }{m})},E_{(\frac{a}{n},\frac{\mu }{n})})=1$, where $\frac{a}{n}<\frac{b}{m}$. Here, for a morphism $\psi \in DG_{\check{T}^2}^1(s_{(\frac{b}{m},\frac{\nu }{m})},s_{(\frac{a}{n},\frac{\mu }{n})})$, the cohomology class $[\psi ]\in \mathrm{Ext}^1(E_{(\frac{b}{m},\frac{\nu }{m})},E_{(\frac{a}{n},\frac{\mu }{n})})=H^1(E_{(\frac{b}{m},\frac{\nu }{m})},E_{(\frac{a}{n},\frac{\mu }{n})})$ is non-trivial, and $C([\psi ])$ denotes the mapping cone of $[\psi ]$. Hereafter, we also denote by $\psi $ a cohomology class of $\psi $ instead of $[\psi ]$. Then, without loss of generality we may discuss the case $(n,a)=(1,0)$, $(m,b)=(1,1)$ only, because we can consider the $SL(2;\mathbb{Z})$ action on $(T^2,\omega )$. As discussed in section 2, $E_{(\frac{a}{n},\frac{\mu }{n})}$ is associated to $s_{(\frac{a}{n},\frac{\mu }{n})}$, and $s_{(\frac{a}{n},\frac{\mu }{n})}$ is transformed by the $SL(2;\mathbb{Z})$ action on $(T^2,\omega )$. We explain this fact in section 6. Thus, we consider the mapping cone of $\psi =\tilde{\psi } d\bar{z}\in DG^1_{\check{T}^2}(s_{(1,\nu )},s_{(0,\mu )})$, where $\mu =p+q\tau$, $\nu =s+t\tau \ (p,q,s,t\in \mathbb{R})$. First, we recall the Atiyah's result on the classification of the isomorphism classes of indecomposable holomorphic vector bundles over an elliptic curve. 
\begin{theo}[Atiyah, 1957, \cite{atiyah}]\label{theati}
The set of isomorphism classes of indecomposable holomorphic vector bundles over an elliptic curve can be identified with the elliptic curve when the rank and degree of holmorphic vector bundles are relatively prime.
\end{theo}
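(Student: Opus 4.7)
The plan is to exploit the $\mathrm{Pic}^{0}(E)$-action by tensor product on the set of isomorphism classes of indecomposable rank-$r$ degree-$d$ bundles, and show that when $\gcd(r,d)=1$ this action is simply transitive, which via $\mathrm{Pic}^{0}(E)\cong E$ yields the desired identification. Equivalently, I would analyse the determinant map
\begin{equation*}
\det : \mathcal{M}(r,d) \longrightarrow \mathrm{Pic}^{d}(E)\cong E,
\end{equation*}
where $\mathcal{M}(r,d)$ denotes the set of isomorphism classes of indecomposable holomorphic bundles of rank $r$ and degree $d$, and prove it is bijective.

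For surjectivity I would use Atiyah's inductive construction: starting from a line bundle $L$ with prescribed determinant, form successive non-split extensions
\begin{equation*}
0 \longrightarrow F_{k-1}\longrightarrow F_{k}\longrightarrow L_{k}\longrightarrow 0
\end{equation*}
with the $L_{k}$ chosen so that the resulting rank-$r$ bundle $F_{r}$ is indecomposable and has the correct determinant. Non-splitness at each stage can be guaranteed because the relevant $\mathrm{Ext}^{1}$ groups on $E$ are computable from Riemann--Roch and Serre duality (using $K_{E}\cong \mathcal{O}_{E}$), and are non-zero in the bidegrees that arise.

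For injectivity the key point is to upgrade indecomposability to slope-stability when $\gcd(r,d)=1$: any proper subbundle $F'\subset F$ of slope $\mu(F')=\mu(F)=d/r$ would force $\mathrm{rank}(F')\cdot d$ to be divisible by $r$, contradicting coprimality. Hence every indecomposable $F\in \mathcal{M}(r,d)$ is stable. Then, given two such bundles $F,G$ with $\det F\cong \det G$, I would produce a non-zero map $F\to G$ and observe that, by stability of both and equality of slopes, any such map must be an isomorphism.

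The main obstacle will be producing that non-zero map from the single datum $\det F\cong \det G$. Riemann--Roch gives $\chi(F^{\vee}\otimes G)=0$, so $\dim\mathrm{Hom}(F,G)=\dim\mathrm{Ext}^{1}(F,G)$ without a priori forcing either to be positive. The cleanest way past this is to invoke a Fourier--Mukai transform on $E$, which interchanges rank and degree and, for $\gcd(r,d)=1$, sends $\mathcal{M}(r,d)$ to $\mathcal{M}(r',d')$ with $r'=1$; the statement then collapses to the rank-one case, where the identification with $E$ is precisely the Abel--Jacobi isomorphism $\mathrm{Pic}^{d}(E)\cong E$. Both surjectivity and injectivity of $\det$ follow at once from this reduction, which is why I expect a Fourier--Mukai approach to give the shortest route to the theorem.
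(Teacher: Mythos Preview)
The paper does not prove this theorem at all: it is quoted as Atiyah's 1957 result and used as a black box, with the surrounding text only explaining how the parametrisation by $\mu \in \mathbb{C}/2\pi(\mathbb{Z}\oplus \tau\mathbb{Z})$ matches the bundles $E_{(\frac{a}{n},\frac{\mu}{n})}$ constructed earlier. So there is no ``paper's own proof'' to compare against; any argument you give is automatically a different route.

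Your outline is broadly reasonable, but there is one genuine gap. In the injectivity step you write that indecomposability upgrades to stability because ``any proper subbundle $F'\subset F$ of slope $\mu(F')=\mu(F)=d/r$ would force $\mathrm{rank}(F')\cdot d$ to be divisible by $r$, contradicting coprimality.'' This only rules out proper subbundles of \emph{equal} slope; it says nothing about subbundles with $\mu(F')>\mu(F)$. What you have actually shown is ``semistable $\Rightarrow$ stable'' when $\gcd(r,d)=1$. You still need the separate fact that on an elliptic curve every indecomposable bundle is semistable. The standard argument uses the Harder--Narasimhan filtration together with Serre duality and $K_E\cong\mathcal{O}_E$: if $F_1\subset F$ is the maximal destabilising subsheaf, then $\mathrm{Ext}^1(F/F_1,F_1)\cong \mathrm{Hom}(F_1,F/F_1)^{\vee}=0$ since $F_1$ is semistable of slope strictly larger than every HN slope of $F/F_1$, forcing $F\cong F_1\oplus F/F_1$. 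Without this step your passage from indecomposable to stable is incomplete.

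The Fourier--Mukai reduction you sketch at the end is the cleanest modern route and does work, but note that it already presupposes that the transform carries indecomposable (equivalently, stable once you have the step above) bundles of coprime type to sheaves rather than complexes, and that it intertwines the $\mathrm{Pic}^0$-action with translation on $E$; you should state and justify those facts if you take that line.
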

We explain how to apply this Theorem \ref{theati} to our discussions. Since $E_{(\frac{a}{n},\frac{\mu }{n})}\cong E_{(\frac{a}{n},\frac{\nu }{n})}$ holds if and only if $\mu \equiv \nu \ (\mathrm{mod}\ 2\pi (\mathbb{Z}\oplus \tau \mathbb{Z}))$ by Proposition \ref{pro3.1}, the set of isomorphism classes of $E_{(\frac{a}{n},\frac{\mu }{n})}$ is parametrized by $\mu \in \mathbb{C}/2\pi (\mathbb{Z}\oplus \tau \mathbb{Z})$. Now $C(\psi )$ is a holomorphic vector bundle whose rank and degree are 2 and 1, respectively. So if $C(\psi )$ is indecomposable, we expect that there exists an $\eta \in \mathbb{C}$ such that $C(\psi )\cong E_{(\frac{1}{2},\frac{\eta }{2})}$ by Theorem \ref{theati}. In fact, for $E_{(\frac{1}{2},\frac{\eta }{2})}$, where $\eta =u+v\tau \ (u,v\in \mathbb{R})$, $C(\psi )\cong E_{(\frac{1}{2},\frac{\eta }{2})}$ holds if and only if $\eta \equiv \mu +\nu +\pi +\pi \tau \ (\mathrm{mod}\ 2\pi (\mathbb{Z}\oplus \tau \mathbb{Z}))$ (Theorem \ref{the4.9}). This is our main theorem which we will show in this section.

Generally, for a given DG-category, we can construct a DG-category consisting of one-sided twisted complexes from the original DG-category, and obtain a triangulated category as the 0-th cohomology of the DG-category of one-sided twisted complexes \cite{bondal}. Here, we denote by $Tr(DG_{\check{T}^2})$ the triangulated category obtained by this construction from $DG_{\check{T}^2}$. In $Tr(DG_{\check{T}^2})$, there exists the following exact triangle associated to the mapping cone, 
\begin{equation*}
\begin{CD}
\cdots T^{-1}E_{(1,\nu )}@>T^{-1}\psi >>E_{(0,\mu )}@>\iota >>C(\psi )@>\pi >>E_{(1,\nu )}@>\psi >>TE_{(0,\mu )}\cdots,
\end{CD}
\end{equation*}
where $T$ is the shift functor. We discuss the conditions when there exist non-trivial holomorphic maps such that $\tilde{\phi } : E_{(\frac{1}{2},\frac{\eta }{2})}\rightarrow C(\psi )\ (\tilde{\phi } \not=0)$ and $\phi : C(\psi )\rightarrow E_{(\frac{1}{2},\frac{\eta }{2})}\ (\phi \not=0)$. We apply the covariant cohomological functor $F:=\mathrm{Hom}(E_{(\frac{1}{2},\frac{\eta }{2})},\cdot )$ and contravariant cohomological functor $G:=\mathrm{Hom}(\cdot ,E_{(\frac{1}{2},\frac{\eta }{2})})$ to it, and obtain the following long exact sequence.
\begin{equation*}
\begin{CD}
\cdots F(E_{(0,\mu )})@>F(\iota )>>F(C(\psi ))@>F(\pi )>>F(E_{(1,\nu )})@>F(\psi )>>F(TE_{(0,\mu )})\cdots,
\end{CD}
\end{equation*}
\begin{equation*}
\begin{CD}
\cdots G(E_{(1,\nu )})@>G(\pi )>>G(C(\psi ))@>G(\iota )>>G(E_{(0,\mu )})@>G(T^{-1}\psi )>>G(T^{-1}E_{(1,\nu )})\cdots.
\end{CD}
\end{equation*}
Then we obtain the following lemma. 
\begin{lemma}\label{lem4.1}
If $F(\psi )=0$ then $\mathrm{dim}F(C(\psi ))=1$ and if $F(\psi )\not=0$ then $F(C(\psi ))=0$.
\end{lemma}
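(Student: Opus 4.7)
The plan is to read off $F(C(\psi))$ directly from the long exact sequence displayed just before the lemma, after computing the dimensions of every other term that appears in the relevant stretch of that sequence via Proposition \ref{pro3.3}.

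First, I would recall that in the triangulated category $Tr(DG_{\check{T}^2})$, applying $F = \mathrm{Hom}(E_{(\frac{1}{2},\frac{\eta}{2})},\cdot)$ to an object $X$ computes $H^0(E_{(\frac{1}{2},\frac{\eta}{2})}, X)$, so that $F(TX) = H^1(E_{(\frac{1}{2},\frac{\eta}{2})}, X)$ and $F(T^{-1}X) = H^{-1}(E_{(\frac{1}{2},\frac{\eta}{2})}, X) = 0$ (since $\Omega^{0,\ast}(\check{T}^2)$ has no components in negative degree). This reduces the question to a comparison of $H^0$'s and $H^1$'s to which Proposition \ref{pro3.3} applies.

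Next, I would compute the four values bordering $F(C(\psi))$ in the long exact sequence. For $\mathrm{Hom}(E_{(\frac{1}{2},\frac{\eta}{2})}, E_{(0,\mu)})$ one has $bn-am = 0\cdot 2 - 1\cdot 1 = -1 < 0$, so by Proposition \ref{pro3.3} $\dim H^0 = 0$ and $\dim H^1 = 1$. For $\mathrm{Hom}(E_{(\frac{1}{2},\frac{\eta}{2})}, E_{(1,\nu)})$ one has $bn-am = 1\cdot 2 - 1\cdot 1 = 1 > 0$, so $\dim H^0 = 1$ and $\dim H^1 = 0$. Plugging these into the relevant stretch of the long exact sequence yields
\begin{equation*}
0 \longrightarrow 0 \longrightarrow F(C(\psi)) \longrightarrow \mathbb{C} \xrightarrow{\ F(\psi)\ } \mathbb{C},
\end{equation*}
so that $F(C(\psi)) = \ker F(\psi)$.

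Finally, a two-case analysis of the linear map $F(\psi) : \mathbb{C} \to \mathbb{C}$ concludes the proof: if $F(\psi) = 0$ then its kernel is all of $\mathbb{C}$, giving $\dim F(C(\psi)) = 1$; if $F(\psi) \neq 0$ then it is injective on a one-dimensional space, giving $F(C(\psi)) = 0$. There is no substantive obstacle here; the only care required is in correctly identifying the shifts $TE_{(0,\mu)}$ and $T^{-1}E_{(1,\nu)}$ with $H^1$ and $H^{-1}$ respectively, and in verifying the signs of $bn-am$ so that Proposition \ref{pro3.3} is applied with the correct orientation.
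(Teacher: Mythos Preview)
Your proof is correct and follows essentially the same route as the paper: both compute the neighboring terms of the long exact sequence via Proposition~\ref{pro3.3} and then read off $F(C(\psi))$ by exactness. Your formulation $F(C(\psi))\cong \ker F(\psi)$ is a slightly more compact way of packaging the same case analysis the paper carries out, and the extra observation that $F(T^{-1}E_{(1,\nu)})=0$ is harmless but unnecessary since $F(E_{(0,\mu)})=0$ already suffices.
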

\begin{proof}
We assume $F(\psi )=0$. By Proposition \ref{pro3.3}, $F(E_{(0,\mu )})=0$ and so $F(\iota )=0$. Thus, $F(\pi )$ is the isomorphism because of the exactness of the sequence, and dim$F(E_{(1,\nu )})=1$ by Proposition \ref{pro3.3}. Hence there is a morphism which is not zero in $F(C(\psi ))$, too, so it is clear that dim$F(C(\psi ))=1$. If $F(\psi )\not=0$, then $F(\psi )$ is the isomorphism because dim$F(E_{(1,\nu )})=\mathrm{dim}F(TE_{(0,\mu )})=1$, so $F(\pi )=0$. Thus, it can be seen that $F(C(\psi ))=0$.
\end{proof}
Similarly as above, we obtain the following lemma.
\begin{lemma}\label{lem4.2}
If $G(T^{-1}\psi )=0$ then $\mathrm{dim}G(C(\psi ))=1$ and if $G(T^{-1}\psi )\not=0$ then $G(C(\psi ))=0$. 
\end{lemma}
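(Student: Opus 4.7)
The plan is to run the argument of Lemma \ref{lem4.1} verbatim, but with the contravariant functor $G = \mathrm{Hom}(\cdot, E_{(\frac{1}{2}, \frac{\eta}{2})})$ in place of $F$. The relevant segment of the resulting long exact sequence is
\begin{equation*}
G(E_{(1,\nu)}) \xrightarrow{G(\pi)} G(C(\psi)) \xrightarrow{G(\iota)} G(E_{(0,\mu)}) \xrightarrow{G(T^{-1}\psi)} G(T^{-1}E_{(1,\nu)}),
\end{equation*}
so everything boils down to pinning down the three outer terms.

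First I would read off the outer dimensions from Proposition \ref{pro3.3}. For $G(E_{(1,\nu)}) = H^0(E_{(1,\nu)}, E_{(\frac{1}{2}, \frac{\eta}{2})})$, the slope numerics $bn - am = 1\cdot 1 - 1\cdot 2 = -1 < 0$ give $G(E_{(1,\nu)}) = 0$, while $\dim H^1(E_{(1,\nu)}, E_{(\frac{1}{2}, \frac{\eta}{2})}) = 1$; the latter agrees with $\dim G(T^{-1}E_{(1,\nu)})$ via the standard shift $G(T^{-1}X) = \mathrm{Ext}^1(X, E_{(\frac{1}{2}, \frac{\eta}{2})})$. For $G(E_{(0,\mu)})$, the numerics $bn - am = 1 > 0$ yield $\dim G(E_{(0,\mu)}) = 1$.

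With these substitutions the sequence collapses to
\begin{equation*}
0 \longrightarrow G(C(\psi)) \xrightarrow{G(\iota)} G(E_{(0,\mu)}) \xrightarrow{G(T^{-1}\psi)} G(T^{-1}E_{(1,\nu)}),
\end{equation*}
with both right-hand spaces one-dimensional. If $G(T^{-1}\psi) = 0$, exactness forces $G(\iota)$ to be an isomorphism onto a one-dimensional space, so $\dim G(C(\psi)) = 1$. If $G(T^{-1}\psi) \ne 0$, it is a nonzero map between one-dimensional spaces, hence an isomorphism with trivial kernel, and exactness yields $G(C(\psi)) = 0$.

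I do not anticipate any real obstacle: the lemma is a formal dimension count strictly parallel to Lemma \ref{lem4.1}. The only bookkeeping step worth double-checking is the identification $G(T^{-1}E_{(1,\nu)}) \cong H^1(E_{(1,\nu)}, E_{(\frac{1}{2}, \frac{\eta}{2})})$ before invoking Proposition \ref{pro3.3}.
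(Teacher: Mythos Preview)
Your proposal is correct and follows exactly the approach the paper intends: the paper simply writes ``Similarly as above, we obtain the following lemma'' after Lemma \ref{lem4.1}, and your argument is precisely that dualization, with the same dimension counts from Proposition \ref{pro3.3}.
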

Note that $C(\psi )$ does not depend on the choice of a non-trivial $\psi $. We consider the local expression of the morphism $\psi =\tilde{\psi }(x,y)d\bar{z} $ as follows. For $\tilde{\psi }(x,y)$, it can be Fourier-expanded as 
\begin{equation*}
\tilde{\psi } (x,y)=\displaystyle{\sum_{H\in \mathbb{Z}}}\psi _H(x)e^{\mathbf{i}Hy},
\end{equation*}
and we see that $\psi _H(x)$ satisfies 
\begin{equation}
\psi _H(x+2\pi )=\psi _{H+1}(x) \label{psi}
\end{equation}
by the conditions of transition functions of $E_{(0,\mu )}$ and $E_{(1,\nu )}$. So we need to define $\psi _H(x)$. Here, as a non-trivial morphism in $\mathrm{Ext}^1(E_{(1,\nu )},E_{(0,\mu )})$, we take a bump function $\psi _H(x)$ as described in Figure 1, where $\varepsilon \in \mathbb{R}$ satisfies $0<\varepsilon <\pi $. The bump function $\psi _H : \mathbb{R}\rightarrow \mathbb{R}$ monotonically increases from $-2\pi H+p-s-\varepsilon $ to $-2\pi H+p-s$, and monotonically decreases from $-2\pi H+p-s$ to $-2\pi H+p-s+\varepsilon $, and otherwise it takes value zero.
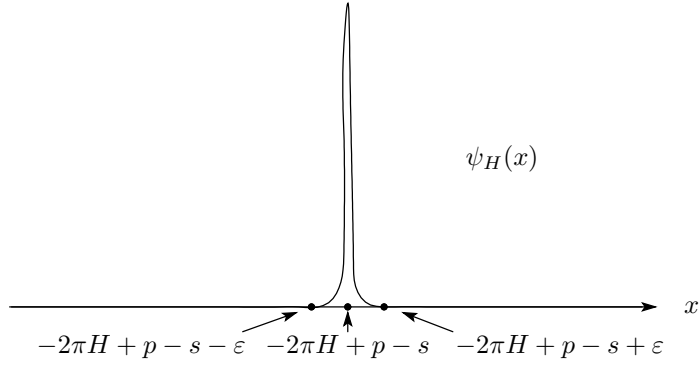
\begin{figure}
\begin{center}
{\unitlength 0.1in%
\begin{picture}(34.8000,26.1000)(0.0000,-26.1000)%
%
\special{pn 8}%
\special{pa 0 2200}%
\special{pa 3380 2200}%
\special{fp}%
\special{sh 1}%
\special{pa 3380 2200}%
\special{pa 3313 2180}%
\special{pa 3327 2200}%
\special{pa 3313 2220}%
\special{pa 3380 2200}%
\special{fp}%
\put(17.7000,-24.1000){\makebox(0,0){$-2\pi H+p-s$}}%
\put(35.7000,-22.0000){\makebox(0,0){$x$}}%
%
\special{pn 8}%
\special{pa 35502652 1634780}%
\special{pa 35502652 1634780}%
\special{fp}%
%
\special{pn 8}%
\special{pa 1634876 1977853670}%
\special{pa 1634876 1977853670}%
\special{fp}%
\put(6.9000,-24.1000){\makebox(0,0){$-2\pi H+p-s-\varepsilon$}}%
\put(28.8000,-24.1000){\makebox(0,0){$-2\pi H+p-s+\varepsilon$}}%
\put(25.8000,-14.2000){\makebox(0,0){$\psi_H(x)$}}%
%
\special{pn 8}%
\special{pa 10 2200}%
\special{pa 1194 2200}%
\special{pa 1226 2201}%
\special{pa 1386 2201}%
\special{pa 1450 2199}%
\special{pa 1482 2199}%
\special{pa 1514 2200}%
\special{pa 1546 2203}%
\special{pa 1578 2204}%
\special{pa 1610 2202}%
\special{pa 1641 2193}%
\special{pa 1670 2178}%
\special{pa 1693 2156}%
\special{pa 1711 2129}%
\special{pa 1724 2100}%
\special{pa 1734 2070}%
\special{pa 1741 2039}%
\special{pa 1745 2007}%
\special{pa 1748 1975}%
\special{pa 1749 1942}%
\special{pa 1750 1910}%
\special{pa 1750 1877}%
\special{pa 1751 1844}%
\special{pa 1751 1811}%
\special{pa 1752 1777}%
\special{pa 1752 1678}%
\special{pa 1753 1645}%
\special{pa 1753 1484}%
\special{pa 1752 1453}%
\special{pa 1752 1391}%
\special{pa 1751 1361}%
\special{pa 1751 1331}%
\special{pa 1750 1302}%
\special{pa 1749 1274}%
\special{pa 1748 1245}%
\special{pa 1745 1161}%
\special{pa 1745 1132}%
\special{pa 1744 1102}%
\special{pa 1744 975}%
\special{pa 1745 939}%
\special{pa 1746 902}%
\special{pa 1748 863}%
\special{pa 1750 822}%
\special{pa 1756 736}%
\special{pa 1759 695}%
\special{pa 1762 660}%
\special{pa 1765 632}%
\special{pa 1768 615}%
\special{pa 1771 610}%
\special{pa 1773 618}%
\special{pa 1775 638}%
\special{pa 1776 667}%
\special{pa 1777 704}%
\special{pa 1778 746}%
\special{pa 1779 792}%
\special{pa 1780 839}%
\special{pa 1780 885}%
\special{pa 1781 930}%
\special{pa 1782 973}%
\special{pa 1782 1015}%
\special{pa 1783 1056}%
\special{pa 1784 1095}%
\special{pa 1784 1133}%
\special{pa 1785 1170}%
\special{pa 1786 1205}%
\special{pa 1786 1240}%
\special{pa 1787 1274}%
\special{pa 1788 1307}%
\special{pa 1788 1339}%
\special{pa 1790 1401}%
\special{pa 1790 1431}%
\special{pa 1791 1460}%
\special{pa 1791 1489}%
\special{pa 1793 1545}%
\special{pa 1793 1573}%
\special{pa 1794 1600}%
\special{pa 1794 1628}%
\special{pa 1795 1655}%
\special{pa 1795 1682}%
\special{pa 1796 1709}%
\special{pa 1796 1737}%
\special{pa 1797 1764}%
\special{pa 1797 1792}%
\special{pa 1798 1820}%
\special{pa 1798 1848}%
\special{pa 1799 1877}%
\special{pa 1799 1936}%
\special{pa 1800 1966}%
\special{pa 1800 1997}%
\special{pa 1801 2029}%
\special{pa 1804 2061}%
\special{pa 1811 2093}%
\special{pa 1823 2124}%
\special{pa 1840 2152}%
\special{pa 1863 2175}%
\special{pa 1889 2190}%
\special{pa 1920 2197}%
\special{pa 1953 2200}%
\special{pa 2020 2200}%
\special{pa 2053 2199}%
\special{pa 2149 2199}%
\special{pa 2180 2200}%
\special{pa 3310 2200}%
\special{fp}%
%
\special{pn 4}%
\special{sh 1}%
\special{ar 1770 2200 16 16 0 6.2831853}%
\special{sh 1}%
\special{ar 1770 2200 16 16 0 6.2831853}%
%
\special{pn 4}%
\special{sh 1}%
\special{ar 1580 2200 16 16 0 6.2831853}%
\special{sh 1}%
\special{ar 1580 2200 16 16 0 6.2831853}%
%
\special{pn 4}%
\special{sh 1}%
\special{ar 1960 2200 16 16 0 6.2831853}%
\special{sh 1}%
\special{ar 1960 2200 16 16 0 6.2831853}%
%
\special{pn 8}%
\special{pa 1220 2330}%
\special{pa 1500 2230}%
\special{fp}%
\special{sh 1}%
\special{pa 1500 2230}%
\special{pa 1430 2234}%
\special{pa 1450 2248}%
\special{pa 1444 2271}%
\special{pa 1500 2230}%
\special{fp}%
%
\special{pn 8}%
\special{pa 2320 2330}%
\special{pa 2060 2230}%
\special{fp}%
\special{sh 1}%
\special{pa 2060 2230}%
\special{pa 2115 2273}%
\special{pa 2110 2249}%
\special{pa 2129 2235}%
\special{pa 2060 2230}%
\special{fp}%
%
\special{pn 8}%
\special{pa 1770 2330}%
\special{pa 1770 2230}%
\special{fp}%
\special{sh 1}%
\special{pa 1770 2230}%
\special{pa 1750 2297}%
\special{pa 1770 2283}%
\special{pa 1790 2297}%
\special{pa 1770 2230}%
\special{fp}%
\end{picture}}%
\caption{An example of the bump function $\psi _H(x)$}
\end{center}
\end{figure}
Note that we can extend $\tilde{\psi} (x,y)$ defined locally to a function defined on $\mathbb{R}^2$ by using the relation (\ref{psi}). In this sense, we often treat $\tilde{\psi } (x,y)$ as a function on $\mathbb{R}^2$. For this $\psi $, in the symplectic geometry side, the values $-2\pi H+p-s$ ($H\in \mathbb{Z}$) correspond to the $x$ coordinates of the intersection points of the Lagrangian submanifolds $\mathcal{L}_{(0,p)}$, $\mathcal{L}_{(1,s)}$ and their copies in the covering space of $T^2$.

We examine the condition for $\eta $ to satisfy dim$F(C(\psi ))=\mathrm{dim}G(C(\psi ))=1$, i.e., the condition to exist non-trivial holomorphic maps $\tilde{\phi } : E_{(\frac{1}{2},\frac{\eta }{2})}\rightarrow C(\psi )$ and $\phi : C(\psi )\rightarrow E_{(\frac{1}{2},\frac{\eta }{2})}$. For $\tilde{\phi } : E_{(\frac{1}{2},\frac{\eta }{2})}\rightarrow C(\psi )$, we obtain the following theorem.
\begin{theo}\label{the4.3}
For $E_{(\frac{1}{2},\frac{\eta }{2})}$, $\mathrm{dim}F(C(\psi ))=1$ holds if and only if $\eta \equiv \mu +\nu +\pi +\pi \tau \ (\mathrm{mod}\ 2\pi (\mathbb{Z}\oplus \tau \mathbb{Z}))$.
\end{theo}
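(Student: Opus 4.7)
My plan is to reduce the question, via Lemma \ref{lem4.1}, to computing when the induced map
\[
F(\psi):\mathrm{Hom}(E_{(\frac{1}{2},\frac{\eta}{2})},E_{(1,\nu)})\longrightarrow \mathrm{Ext}^{1}(E_{(\frac{1}{2},\frac{\eta}{2})},E_{(0,\mu)})
\]
vanishes. By Proposition \ref{pro3.3}, both the source and the target are $1$-dimensional (since $1\cdot 2-1\cdot 1=1$ and $|0\cdot 2-1\cdot 1|=1$), so $F(\psi)=0$ is equivalent to the vanishing of $\psi\circ\tilde{\phi}$ as a class in $H^{1}(E_{(\frac{1}{2},\frac{\eta}{2})},E_{(0,\mu)})$ for one, hence any, non-zero holomorphic $\tilde{\phi}:E_{(\frac{1}{2},\frac{\eta}{2})}\to E_{(1,\nu)}$. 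Thus the theorem reduces to identifying exactly those $\eta$ for which this composition is $d_{(\frac{1}{2},0)}$-exact.

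The first concrete step is to write $\tilde{\phi}$ explicitly. The analysis done in the proof of Proposition \ref{pro3.1} (adapted to the non-square case using the transition data $U$, $V$ and the holomorphicity equation $d_{(\frac{1}{2},1)}(\tilde{\phi})=0$) gives a local Fourier series
\[
\tilde{\phi}(x,y)=\sum_{N\in\mathbb{Z}}C_{N}\,e^{-\frac{\mathbf{i}}{2\pi\tau}(\nu-\frac{\eta}{2}-\pi N)\,x}e^{\mathbf{i}Ny}\cdot(\text{column structure}),
\]
where the periodicity conditions in $x$ force the coefficients $C_{N}$ to fit into a single theta-like orbit. I would then form the composition $\psi\cdot\tilde{\phi}=\tilde{\psi}(x,y)\tilde{\phi}(x,y)d\bar{z}$, using the bump-function representative $\tilde{\psi}$ of Figure 1, and repackage the result as a locally defined $(0,1)$-form morphism from $E_{(\frac{1}{2},\frac{\eta}{2})}$ to $E_{(0,\mu)}$ with its own Fourier expansion in $y$.

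Next, I would set up the exactness equation: one seeks $\chi\in DG^{0}_{\check{T}^{2}}(s_{(\frac{1}{2},\frac{\eta}{2})},s_{(0,\mu)})$ with $d_{(\frac{1}{2},0)}(\chi)=\psi\tilde{\phi}$. Expanding both sides in Fourier modes in $y$ turns this into a family of first-order ODEs in $x$ for the coefficients $\chi_{H}(x)$, each with a forcing term that is an $x$-integral of the bump function against a Gaussian-in-$x$ factor coming from $\tilde{\phi}$. The obstruction to solving these ODEs while also matching the required transition behaviour $\chi_{H}(x+2\pi)=\chi_{H+1}(x)$ is a single complex number $\Theta(\eta,\mu,\nu,\tau)$, which one recognises as a theta series in $\eta-\mu-\nu$.

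The main obstacle will be carrying out the theta-function bookkeeping cleanly enough to read off the zero locus of $\Theta$. The bump-function representative collapses the $x$-integration to a delta-like evaluation at the intersection abscissae $x=-2\pi H+p-s$, which converts $\Theta$ into a genuine theta series of characteristic $(\frac{1}{2},\frac{1}{2})$ in the variable $\eta-\mu-\nu$. Since such an odd theta function vanishes precisely on the shifted lattice $\pi+\pi\tau+2\pi(\mathbb{Z}\oplus\tau\mathbb{Z})$, this gives exactly the condition $\eta\equiv\mu+\nu+\pi+\pi\tau\pmod{2\pi(\mathbb{Z}\oplus\tau\mathbb{Z})}$ stated in the theorem. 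Finally I would check both implications: when the congruence holds, the theta zero provides an explicit $\chi$ solving $d_{(\frac{1}{2},0)}\chi=\psi\tilde{\phi}$, so $F(\psi)=0$ and $\dim F(C(\psi))=1$; when it fails, $\Theta\neq 0$ furnishes a non-trivial class, so $F(\psi)\neq 0$ and $F(C(\psi))=0$ by Lemma \ref{lem4.1}.
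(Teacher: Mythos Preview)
Your proposal is correct and is, at the level of computation, the same argument the paper gives, only reorganised through Lemma~\ref{lem4.1}. In the paper the map $\tilde\phi:E_{(\frac{1}{2},\frac{\eta}{2})}\to C(\psi)$ is sought directly as a $2\times 2$ matrix; its bottom row $(\tilde\phi_{21},\tilde\phi_{22})$ is precisely your holomorphic $\tilde\phi:E_{(\frac{1}{2},\frac{\eta}{2})}\to E_{(1,\nu)}$, and its top row $(\tilde\phi_{11},\tilde\phi_{12})$ is precisely your primitive $\chi$ for $\psi\tilde\phi$. The paper's convergence constraint $\lim_{x\to\pm\infty}\tilde\phi_{1j,I}(x)=0$ is your obstruction $\Theta$, and in both cases it is identified with a value of $\vartheta_{\frac{1}{2},\frac{1}{2}}$, giving the stated lattice condition on $\eta$.

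One small imprecision: the bump function does not ``collapse the $x$-integration to a delta-like evaluation''. In the paper the integral of $\psi_H$ against the Gaussian produces a smoothed step function $\theta_\varepsilon(x-(-2\pi H+p-s))$, and the obstruction arises as the $x\to+\infty$ limit of the resulting infinite sum (all steps switched on), which is then recognised as the theta series. Your description of $\Theta$ as ``a single complex number'' obtained from the transition/periodicity constraint is correct in spirit, but the mechanism is this limiting sum rather than a pointwise evaluation. The advantage of your framing is conceptual clarity (the question is visibly an $\mathrm{Ext}^1$ obstruction); the advantage of the paper's is that it simultaneously produces the explicit basis element of $F(C(\psi))$ recorded in Corollary~\ref{cor4.4}, which is needed later for the isomorphism in Theorem~\ref{the4.9}.
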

\begin{proof}
We recall the definition of the mapping cone of $\psi $. It is defined by
\begin{align*}
&C(\psi ):=E_{(0,\mu )}\oplus E_{(1,\nu )},\\
&\tilde{d}:=\left(\begin{array}{ccc}d_0&\psi \\0&d_1\end{array}\right)=\left(\begin{array}{ccc}2\bar{\partial }-\frac{\mathbf{i}}{\pi (\bar{\tau }-\tau )}\mu d\bar{z}&\tilde{\psi } d\bar{z}\\0&2\bar{\partial }-\frac{\mathbf{i}}{\pi (\bar{\tau }-\tau )}(x+\nu )d\bar{z}\end{array}\right).
\end{align*}
Here, the transition functions of $C(\psi )$ on $O_{1j}\cap O_{2j}$, $O_{2j}\cap O_{3j}$, $O_{i1}\cap O_{i2}$, $O_{i2}\cap O_{i3}$ and $O_{i3}\cap O_{i1}$ are trivial, but are non-trivial on $O_{3j}\cap O_{1j}$ ($i,j=1,2,3$). They are expressed as 
\begin{equation}
\left.\left(\begin{array}{ccc}\tilde{s}_0\\\tilde{s}_1\end{array}\right)_{(3,j)} \right|_{O_{3j}\cap O_{1j}}=\left(\begin{array}{ccc}1&0\\0&e^{\mathbf{i}y}\end{array}\right) \left.\left(\begin{array}{ccc}\tilde{s}_0\\\tilde{s}_1\end{array}\right)_{(1,j)} \right|_{O_{3j}\cap O_{1j}}, \label{transition}
\end{equation}
where $(\tilde{s}_0,\tilde{s}_1)^t_{(i,j)} \in \Gamma (C(\psi ))$. Note that $\Gamma (C(\psi ))$ denotes the set of sections of $C(\psi )$. On the other hand, the holomorphic structure of $E_{(\frac{1}{2},\frac{\eta }{2})}\ (\eta =u+v\tau ,u,v\in \mathbb{R})$ is defined by 
\begin{equation*}
d:=\left(\begin{array}{ccc}d_{\frac{1}{2}}&0\\0&d_{\frac{1}{2}}\end{array}\right)=\left(\begin{array}{ccc}2\bar{\partial }-\frac{\mathbf{i}}{\pi (\bar{\tau } -\tau )}(\frac{1}{2}x+\frac{\eta}{2} )d\bar{z}&0\\0&2\bar{\partial }-\frac{\mathbf{i}}{\pi (\bar{\tau } -\tau )}(\frac{1}{2}x+\frac{\eta }{2})d\bar{z}\end{array}\right).
\end{equation*}
The transition functions of $E_{(\frac{1}{2},\frac{\eta }{2})}$ are trivial on $O_{1j}\cap O_{2j}$, $O_{2j}\cap O_{3j}$, $O_{i1}\cap O_{i2}$ and $O_{i2}\cap O_{i3}$, but are non-trivial on $O_{3j}\cap O_{1j}$ and $O_{i3}\cap O_{i1}$. They are expressed as
\begin{align}
&\left.\left(\begin{array}{ccc}s_0\\s_1\end{array}\right)_{(3,j)}\right|_{O_{3j}\cap O_{1j}}=\left(\begin{array}{ccc}0&e^{\frac{\mathbf{i}}{2}y}\\e^{\frac{\mathbf{i}}{2}y}&0\end{array}\right) \left.\left(\begin{array}{ccc}s_0\\s_1\end{array}\right)_{(1,j)}\right|_{O_{3j}\cap O_{1j}}\label{s1},\\
&\left.\left(\begin{array}{ccc}s_0\\s_1\end{array}\right)_{(i,3)}\right|_{O_{i3}\cap O_{i1}}=\left(\begin{array}{ccc}1&0\\0&-1\end{array}\right)\left.\left(\begin{array}{ccc}s_0\\s_1\end{array}\right)_{(i,1)}\right|_{O_{i3}\cap O_{i1}},\label{s2}
\end{align}
where $(s_0,s_1)^t_{(i,j)} \in \Gamma (E_{(\frac{1}{2},\frac{\eta }{2})})$.
By definition, $\mathrm{rank}C(\psi )=\mathrm{rank}E_{(\frac{1}{2},\frac{\eta }{2})}=2$, so $\tilde{\phi }$ is expressed locally as
\begin{equation*}
\tilde{\phi }=\left(\begin{array}{ccc}\tilde{\phi }_{11}(x,y)&\tilde{\phi }_{12}(x,y)\\\tilde{\phi }_{21}(x,y)&\tilde{\phi }_{22}(x,y)\end{array}\right).
\end{equation*}
For the transition function of $C(\psi )$ on $O_{i3}\cap O_{i1}$, we see that
\begin{equation}
\left.\left(\begin{array}{ccc}\tilde{s}_0\\\tilde{s}_1\end{array}\right)_{(i,3)}\right|_{O_{i3}\cap O_{i1}} =\left.\left(\begin{array}{ccc}\tilde{s}_0\\\tilde{s}_1\end{array}\right)_{(i,1)}\right|_{O_{i3}\cap O_{i1}}, \label{trans}
\end{equation}
so by using the relations (\ref{s2}), (\ref{trans}), it can be seen that $\tilde{\phi }_{11}$, $\tilde{\phi }_{12}$, $\tilde{\phi }_{21}$ and $\tilde{\phi }_{22}$ satisfy 
 \begin{gather*}
\tilde{\phi }_{11}(x,y+2\pi )=\tilde{\phi }_{11}(x,y),\ -\tilde{\phi }_{12}(x,y+2\pi )=\tilde{\phi }_{12}(x,y),\\
\tilde{\phi }_{21}(x,y+2\pi )=\tilde{\phi }_{21}(x,y),\ -\tilde{\phi }_{22}(x,y+2\pi )=\tilde{\phi }_{22}(x,y).
\end{gather*}
Hence they can be Fourier-expanded as 
\begin{gather*}
\tilde{\phi  }_{11}(x,y)=\displaystyle{\sum_{I\in \mathbb{Z}}}\tilde{\phi }_{11,I}(x)e^{\mathbf{i}Iy},\ \tilde{\phi }_{12}(x,y)=e^{\frac{\mathbf{i}}{2}y}\displaystyle{\sum_{J\in \mathbb{Z}}}\tilde{\phi }_{12,J}(x)e^{\mathbf{i}Jy},\\
\tilde{\phi }_{21}(x,y)=\displaystyle{\sum_{K\in \mathbb{Z}}}\tilde{\phi }_{21}(x)e^{\mathbf{i}Ky},\ \tilde{\phi }_{22}(x,y)=e^{\frac{\mathbf{i}}{2}y}\displaystyle{\sum_{L\in \mathbb{Z}}}\tilde{\phi }_{22,L}(x)e^{\mathbf{i}Ly}.
\end{gather*}
Furthermore, by using the relations (\ref{transition}), (\ref{s1}), we obtain 
\begin{align*}
&\tilde{\phi }_{11,I}(x+2\pi )=\tilde{\phi }_{12,I}(x),\ \tilde{\phi }_{12,J}(x+2\pi )=\tilde{\phi }_{11,J+1}(x),\\
&\tilde{\phi }_{21,K}(x+2\pi )=\tilde{\phi }_{22,K-1}(x),\ \tilde{\phi }_{22,L}(x+2\pi )=\tilde{\phi }_{21,L}(x). 
\end{align*}
Now $\tilde{\phi }$ belongs to $H^0(E_{(\frac{1}{2},\frac{\eta }{2})},C(\psi ))$, namely, $\tilde{\phi }$ satisfies the differential equation $\tilde{d}\tilde{\phi }-\tilde{\phi }d=0$. We see 
\begin{align*}
&\tilde{d}\tilde{\phi }-\tilde{\phi }d\\
&=\left(\begin{array}{ccc}d_0&\psi \\0&d_1\end{array}\right)\left(\begin{array}{ccc}\tilde{\phi }_{11}&\tilde{\phi }_{12}\\\tilde{\phi }_{21}&\tilde{\phi }_{22}\end{array}\right)-\left(\begin{array}{ccc}\tilde{\phi }_{11}&\tilde{\phi }_{12}\\\tilde{\phi }_{21}&\tilde{\phi }_{22}\end{array}\right)\left(\begin{array}{ccc}d_{\frac{1}{2}}&0\\0&d_{\frac{1}{2}}\end{array}\right)\\
&=\left(\begin{array}{ccc}d_0\tilde{\phi }_{11}-\tilde{\phi }_{11}d_{\frac{1}{2}}+\tilde{\psi } \tilde{\phi }_{21}d\bar{z}&d_0\tilde{\phi }_{12}-\tilde{\phi }_{12}d_{\frac{1}{2}}+\tilde{\psi } \tilde{\phi }_{22}d\bar{z}\\d_1\tilde{\phi }_{21}-\tilde{\phi }_{21}d_{\frac{1}{2}}&d_1\tilde{\phi }_{22}-\tilde{\phi }_{22}d_{\frac{1}{2}}\end{array}\right).
\end{align*}
Each component of this matrix is expressed locally as follows.
\begin{flalign*}
&d_0\tilde{\phi }_{11}-\tilde{\phi }_{11}d_{\frac{1}{2}}+\tilde{\psi } \tilde{\phi }_{21}d\bar{z}&\\
&=\displaystyle{-\frac{2\tau }{\bar{\tau }-\tau }}\displaystyle{\sum_{I\in \mathbb{Z}}}\Bigl\{\frac{d}{dx}\tilde{\phi }_{11,I}(x)-\frac{\mathbf{i}}{\tau }\left(\frac{x}{4\pi }-\frac{p}{2\pi }+\frac{u}{4\pi }-\frac{q}{2\pi }\tau +\frac{v}{4\pi }\tau +I\right)\tilde{\phi }_{11,I}(x)&\\
&\ \ \ \ -\frac{\bar{\tau }-\tau }{2\tau }\displaystyle{\sum_{H\in \mathbb{Z}}}\psi _H(x)\tilde{\phi }_{21,I-H}(x)\Bigr\}e^{\mathbf{i}Iy}d\bar{z},&
\end{flalign*}
\begin{flalign*}
&d_0\tilde{\phi }_{12}-\tilde{\phi }_{12}d_{\frac{1}{2}}+\tilde{\psi } \tilde{\phi }_{22}d\bar{z}&\\
&=-\frac{2\tau }{\bar{\tau }-\tau }e^{\frac{i}{2}y}\displaystyle{\sum_{J\in \mathbb{Z}}}\Bigl\{\frac{d}{dx}\tilde{\phi }_{12,J}(x)&\\
&\ \ \ -\frac{\mathbf{i}}{\tau }\left(\frac{x}{4\pi }-\frac{p}{2\pi }+\frac{u}{4\pi }+\frac{1}{2}-\frac{q}{2\pi }\tau +\frac{v}{4\pi }\tau +J\right)\tilde{\phi }_{12,J}(x)&\\
&\ \ \ -\frac{\bar{\tau }-\tau }{2\tau }\displaystyle{\sum_{H\in \mathbb{Z}}}\psi _H(x)\tilde{\phi }_{22,J-H}(x)\Bigr\}e^{\mathbf{i}Jy}d\bar{z},&\\
&d_1\tilde{\phi }_{21}-\tilde{\phi }_{21}d_{\frac{1}{2}}&\\
&=-\frac{2\tau }{\bar{\tau }-\tau }\displaystyle{\sum_{K\in \mathbb{Z}}}\Bigl\{\frac{d}{dx}\tilde{\phi }_{21,K}(x)&\\
&\ \ \ +\frac{\mathbf{i}}{\tau }\left(\frac{x}{4\pi }+\frac{s}{2\pi }-\frac{u}{4\pi }+\frac{t}{2\pi }\tau -\frac{v}{4\pi }\tau  -K\right)\tilde{\phi }_{21,K}(x)\Bigr\}e^{\mathbf{i}Ky}d\bar{z},&\\
&d_1\tilde{\phi }_{22}-\tilde{\phi }_{22}d_{\frac{1}{2}}&\\
&=-\frac{2\tau }{\bar{\tau }-\tau }e^{\frac{\mathbf{i}}{2}y}\displaystyle{\sum_{L\in \mathbb{Z}}}\Bigl\{\frac{d}{dx}\tilde{\phi }_{22,L}(x)&\\
&\ \ \ +\frac{\mathbf{i}}{\tau }\left(\frac{x}{4\pi }+\frac{s}{2\pi }-\frac{u}{4\pi }-\frac{1}{2}+\frac{t}{2\pi }\tau -\frac{v}{4\pi }\tau -L\right)\tilde{\phi }_{22,L}(x)\Bigr\}e^{\mathbf{i}Ly}d\bar{z}.&
\end{flalign*}
During these calculations, we put $K+H=I$ and $L+H=J$. The solutions of the differential equations for $\tilde{\phi }_{21,K}(x)$ and $\tilde{\phi }_{22,L}(x)$ which satisfy $\tilde{\phi }_{21,K}(x+2\pi )=\tilde{\phi }_{22,K-1}(x)$ and $\tilde{\phi }_{22,L}(x+2\pi )=\tilde{\phi }_{21,L}(x)$ are given by
\begin{align*}
&\tilde{\phi }_{21,K}(x)=\tilde{C}_{21,K}e^{-\frac{\mathbf{i}}{\tau }(\frac{x^2}{8\pi }+(\frac{s}{2\pi }-\frac{u}{4\pi }+\frac{t}{2\pi }\tau -\frac{v}{4\pi }\tau  -K)x)},\\
&\tilde{\phi }_{22,L}(x)=\tilde{C}_{22,L}e^{-\frac{\mathbf{i}}{\tau }(\frac{x^2}{8\pi }+(\frac{s}{2\pi }-\frac{u}{4\pi }-\frac{1}{2}+\frac{t}{2\pi }\tau -\frac{v}{4\pi }\tau  -L)x)}.
\end{align*}
Here $\tilde{C}_{21,K}$ and $\tilde{C}_{22,L}$ are arbitrary constants and they satisfy 
\begin{align*}
&\tilde{C}_{21,K}e^{\frac{\mathbf{i}}{\tau }(2\pi K-s+\frac{u}{2}-\frac{\pi }{2}-t\tau +\frac{v}{2}\tau )}=\tilde{C}_{22,K-1},\\
&\tilde{C}_{22,L}e^{\frac{\mathbf{i}}{\tau }(2\pi L-s+\frac{u}{2}+\frac{\pi }{2}-t\tau +\frac{v}{2}\tau )}=\tilde{C}_{21,L}. 
\end{align*}
Hence, $\tilde{C}_{21,K}$ and $\tilde{C}_{22,L}$ are given by
 \begin{align*}
&\tilde{C}_{21,K}=e^{-\frac{\mathbf{i}}{\tau }(2\pi K^2-(2s-u+2t\tau -v\tau )K)},\\
&\tilde{C}_{22,L}=e^{-\frac{\mathbf{i}}{\tau }(2\pi L^2-(2s-u-2\pi +2t\tau -v\tau  )L-s+\frac{u}{2}+\frac{\pi }{2}-t\tau +\frac{v}{2}\tau )}.
\end{align*}
Thus, $\tilde{\phi }_{21,K}(x)$ and $\tilde{\phi }_{22,L}(x)$ are expressed locally as
\begin{align*}
&\tilde{\phi }_{21,K}(x)=e^{-\frac{\mathbf{i}}{4\pi }(2t-v)x+\frac{\mathbf{i}}{8\pi \tau }(u-2s)^2+\mathbf{i}(2t-v)K-\frac{\mathbf{i}}{8\pi \tau }(x-4\pi K+2s-u)^2},\\
&\tilde{\phi }_{22,L}(x)=e^{-\frac{\mathbf{i}}{4\pi }(2t-v)x+\frac{\mathbf{i}}{8\pi \tau }(u-2s)^2+\frac{\mathbf{i}}{2}(2t-v)+\mathbf{i}(2t-v)L-\frac{\mathbf{i}}{8\pi \tau }(x-4\pi L+2s-u-2\pi )^2},
\end{align*}
so $\tilde{\phi }_{21}(x,y)$ and $\tilde{\phi }_{22}(x,y)$ are expressed locally as follows.
 \begin{align*}
&\tilde{\phi }_{21}(x,y)=e^{-\frac{\mathbf{i}}{4\pi }(2t-v)x+\frac{\mathbf{i}}{8\pi \tau }(u-2s)^2}\displaystyle{\sum_{K\in \mathbb{Z}}}e^{\mathbf{i}(2t-v)K-\frac{\mathbf{i}}{8\pi \tau }(x-4\pi K+2s-u)^2}e^{\mathbf{i}Ky},\\
&\tilde{\phi }_{22}(x,y)=e^{-\frac{\mathbf{i}}{4\pi }(2t-v)x+\frac{\mathbf{i}}{2}y+\frac{\mathbf{i}}{8\pi \tau }(u-2s)^2+\frac{\mathbf{i}}{2}(2t-v)}\\
&\hspace{17mm}\times \displaystyle{\sum_{L\in \mathbb{Z}}}e^{\mathbf{i}(2t-v)L-\frac{\mathbf{i}}{8\pi \tau }(x-4\pi L+2s-u-2\pi )^2}e^{\mathbf{i}Ly}.
\end{align*}
The solutions of the differential equations for $\tilde{\phi }_{11,I}(x)$ and $\tilde{\phi }_{12,J}(x)$ are given by 
\begin{align*}
&\tilde{\phi }_{11,I}(x)=\Bigl\{\frac{\bar{\tau }-\tau }{2\tau }\displaystyle{\sum_{H\in \mathbb{Z}}}\tilde{C}_{21,I-H}\displaystyle{\int_{-\infty }^xe^{-\frac{\mathbf{i}}{\tau }(\frac{x^2}{4\pi }+(-\frac{p}{2\pi }+\frac{s}{2\pi }-\frac{q}{2\pi }\tau +\frac{t}{2\pi }\tau +H)x)}\psi _H(x)dx}\\
&\hspace{17mm}+\tilde{C}_{11,I}\Bigr\}e^{\frac{\mathbf{i}}{\tau }(\frac{x^2}{8\pi }+(-\frac{p}{2\pi }+\frac{u}{4\pi }-\frac{q}{2\pi }\tau +\frac{v}{4\pi }\tau +I)x)},\\
&\tilde{\phi }_{12,J}(x)=\Bigl\{\frac{\bar{\tau }-\tau }{2\tau }\displaystyle{\sum_{H\in \mathbb{Z}}}\tilde{C}_{22,J-H}\displaystyle{\int_{-\infty }^xe^{-\frac{\mathbf{i}}{\tau }(\frac{x^2}{4\pi }+(-\frac{p}{2\pi }+\frac{s}{2\pi }-\frac{q}{2\pi }\tau +\frac{t}{2\pi }\tau +H)x)}\psi _H(x)dx}\\
&\hspace{17mm}+\tilde{C}_{12,J}\Bigr\}e^{\frac{\mathbf{i}}{\tau }(\frac{x^2}{8\pi }+(-\frac{p}{2\pi }+\frac{u}{4\pi }+\frac{1}{2}-\frac{q}{2\pi }\tau +\frac{v}{4\pi }\tau +J)x)},
\end{align*}
where $\tilde{C}_{11,I}$ and $\tilde{C}_{12,J}$ are arbitrary constants. Here, we introduce a function $\theta _{\varepsilon }(x-(-2\pi H+p-s))$ by
 \begin{equation*}
\lambda _{\tau ,H}\theta _{\varepsilon }(x-(-2\pi H+p-s)):=\displaystyle{\int_{-\infty }^xe^{-\frac{\mathbf{i}}{\tau }(\frac{x^2}{4\pi }+(-\frac{p}{2\pi }+\frac{s}{2\pi }-\frac{q}{2\pi }\tau +\frac{t}{2\pi }\tau +H)x)}\psi _H(x)dx},
\end{equation*}
where $\varepsilon \in \mathbb{R}$ satisfies $0<\varepsilon <\pi $ and $\lambda _{\tau ,H}\in \mathbb{C}$. We assume that for any $x\geq -2\pi H+p-s+\varepsilon $, $\theta _{\varepsilon }(x-(-2\pi H+p-s))=1$. Then from the periodicity constraints $\tilde{\phi }_{11,I}(x+2\pi )=\tilde{\phi }_{12,I}(x)$ and $\tilde{\phi }_{12,J}(x+2\pi )=\tilde{\phi }_{11,J+1}(x)$, we obtain the following conditions.
\begin{align*}
&\lambda _{\tau ,H}=e^{\frac{\mathbf{i}}{\tau }(2\pi H-p+s-\pi -q\tau +t\tau )}\lambda _{\tau ,H-1},\\
&\tilde{C}_{11,I}e^{\frac{\mathbf{i}}{\tau }(2\pi I-p+2u+\frac{\pi }{2}-q\tau +2v\tau )}=\tilde{C}_{12,I},\\
&\tilde{C}_{12,J}e^{\frac{\mathbf{i}}{\tau }(2\pi J-p+2u+\frac{3}{2}\pi -q\tau +2v\tau )}=\tilde{C}_{11,J+1}.
\end{align*}
Hence, it can be seen that $\lambda _{\tau ,H}=e^{\frac{\mathbf{i}}{\tau }(\pi H^2+(-p+s-q\tau +t\tau )H+\lambda )}$, where $\lambda \in \mathbb{C}$. Thus, we obtain the following formula.
\begin{align*}
&\displaystyle{\int_{-\infty }^xe^{-\frac{\mathbf{i}}{\tau }(\frac{x^2}{4\pi }+(-\frac{p}{2\pi }+\frac{s}{2\pi }-\frac{q}{2\pi }\tau +\frac{t}{2\pi }\tau +H)x)}\psi _H(x)dx}\\
&=e^{\frac{\mathbf{i}}{\tau }(\pi H^2+(-p+s-q\tau +t\tau )H+\lambda )}\theta _{\varepsilon }(x-(-2\pi H+p-s)).
\end{align*}
We put $\lambda =\frac{q}{2}\tau +\frac{t}{2}\tau -\frac{v}{2}\tau -\frac{\pi }{4}$. The solutions $\tilde{\phi }_{11,I}(x)$ and $\tilde{\phi }_{12,J}(x)$ must satisfy 
\begin{equation}
\displaystyle{\lim_{x\rightarrow \pm \infty }}\tilde{\phi }_{11,I}(x)=0, \displaystyle{\lim_{x\rightarrow \pm \infty }}\tilde{\phi }_{12,J}(x)=0, \label{limit}
\end{equation}
because, for any $(\alpha ,\beta )\in \mathbb{R}^2$, the values of $\tilde{\phi }_{11}(\alpha ,\beta )$ and $\tilde{\phi }_{12}(\alpha ,\beta )$ must not diverge under the conditions $\tilde{\phi }_{11,I}(x+2\pi )=\tilde{\phi }_{12,I}(x)$ and $\tilde{\phi }_{12,J}(x+2\pi )=\tilde{\phi }_{11,J+1}(x)$. We examine when $u$ and $v$ satisfy the convergence conditions (\ref{limit}). First, for $\tilde{\phi }_{11,I}(x)$, one has
\begin{flalign*}
&\tilde{\phi }_{11,I}(x)=\frac{\bar{\tau }-\tau }{2\tau }e^{\frac{\mathbf{i}}{4\pi }(v-2q)x-\mathbf{i}(v-2t)I-\frac{\mathbf{i}}{\tau }(\frac{p^2}{4\pi }-\frac{s^2}{4\pi }-\frac{u^2}{8\pi }-\frac{ps}{2\pi }+\frac{us}{2\pi }+\frac{\pi }{4})}&\\
&\ \ \ \ \ \ \ \ \ \ \ \ \ \ \times \Bigl\{\displaystyle{\sum_{H\in \mathbb{Z}}}e^{\frac{\mathbf{i}}{2}(v-q-t)(2H-1)-\frac{\pi \mathbf{i}}{\tau }(H-2I+\frac{p}{2\pi }+\frac{s}{2\pi }-\frac{u}{2\pi })^2}\theta _{\varepsilon }(x-(-2\pi H+p-s))&\\
&\ \ \ \ \ \ \ \ \ \ \ \ \ \ +\frac{2\tau }{\bar{\tau }-\tau }e^{-\frac{\mathbf{i}}{\tau }\{2\pi I^2+(-2p+u+2t\tau -v\tau )I+\frac{p^2}{4\pi }+\frac{s^2}{4\pi }+\frac{u^2}{4\pi }+\frac{ps}{4\pi }-\frac{pu}{2\pi }-\frac{us}{2\pi }-\frac{\pi }{4}\}}\tilde{C}_{11,I}\Bigr\}&\\
&\ \ \ \ \ \ \ \ \ \ \ \ \ \ \times e^{\frac{\mathbf{i}}{8\pi \tau }(x+4\pi I-2p+u)^2}.&
\end{flalign*}
Here, the function 
\begin{equation*}
\displaystyle{\sum_{H\in \mathbb{Z}}}e^{\frac{\mathbf{i}}{2}(v-q-t)(2H-1)-\frac{\pi \mathbf{i}}{\tau }(H-2I+\frac{p}{2\pi }+\frac{s}{2\pi }-\frac{u}{2\pi })^2}\theta _{\varepsilon }(x-(-2\pi H+p-s))
\end{equation*}
converges to $0$ when $x\rightarrow -\infty $. Thus, it can be seen that $\tilde{C}_{11,I}=0$ for any $I\in \mathbb{Z}$, so we check the conditions for $u$ and $v$ to satisfy 
\begin{equation*}
\displaystyle{\lim_{x\rightarrow \infty }}\displaystyle{\sum_{H\in \mathbb{Z}}}e^{\frac{\mathbf{i}}{2}(v-q-t)(2H-1)-\frac{\pi \mathbf{i}}{\tau }(H-2I+\frac{p}{2\pi }+\frac{s}{2\pi }-\frac{u}{2\pi })^2}\theta _{\varepsilon }(x-(-2\pi H+p-s))=0. 
\end{equation*}
This limiting value is calculated as 
\begin{align*}
&\displaystyle{\lim_{x\rightarrow \infty }}\displaystyle{\sum_{H\in \mathbb{Z}}}e^{\frac{\mathbf{i}}{2}(v-q-t)(2H-1)-\frac{\pi \mathbf{i}}{\tau }(H-2I+\frac{p}{2\pi }+\frac{s}{2\pi }-\frac{u}{2\pi })^2}\theta _{\varepsilon }(x-(-2\pi H+p-s))\\
&=e^{\mathbf{i}(v-q-t)(2I+\frac{1}{2})}\displaystyle{\sum_{n\in \mathbb{Z}}}e^{2\pi \mathbf{i}n(-\frac{q}{2\pi }-\frac{t}{2\pi }+\frac{v}{2\pi })-\frac{\pi \mathbf{i}}{\tau }(n+1+\frac{p}{2\pi }+\frac{s}{2\pi }-\frac{u}{2\pi })^2}\\
&=e^{\mathbf{i}(v-q-t)(2I-\frac{p}{2\pi }-\frac{s}{2\pi }+\frac{u}{2\pi }-\frac{1}{2})}\displaystyle{\sum_{n\in \mathbb{Z}}}e^{-\frac{\pi \mathbf{i}}{\tau }(n+\frac{p}{2\pi }+\frac{s}{2\pi }-\frac{u}{2\pi }+1)^2}\\
&\ \ \ \times e^{2\pi \mathbf{i}(n+\frac{p}{2\pi }+\frac{s}{2\pi }-\frac{u}{2\pi }+1)(-\frac{q}{2\pi }-\frac{t}{2\pi }+\frac{v}{2\pi }-\frac{1}{2}+\frac{1}{2})}\\
&=e^{\mathbf{i}(v-q-t)(2I-\frac{p}{2\pi }-\frac{s}{2\pi }+\frac{u}{2\pi }-\frac{1}{2})}\vartheta _{\frac{1}{2}+(\frac{p}{2\pi }+\frac{s}{2\pi }-\frac{u}{2\pi }+\frac{1}{2}),\frac{1}{2}}\left(-\frac{q}{2\pi }-\frac{t}{2\pi }+\frac{v}{2\pi }-\frac{1}{2},-\frac{1}{\tau }\right)\\
&=e^{\mathbf{i}(v-q-t)(2I-\frac{p}{2\pi }-\frac{s}{2\pi }+\frac{u}{2\pi }-\frac{1}{2})-\frac{\pi \mathbf{i}}{\tau }(\frac{p}{2\pi }+\frac{s}{2\pi }-\frac{u}{2\pi }+\frac{1}{2})^2+2\pi \mathbf{i}(\frac{p}{2\pi }+\frac{s}{2\pi }-\frac{u}{2\pi }+\frac{1}{2})(-\frac{q}{2\pi }-\frac{t}{2\pi }+\frac{v}{2\pi }-\frac{1}{2})}\\
&\ \ \ \times e^{\pi \mathbf{i}(\frac{p}{2\pi }+\frac{s}{2\pi }-\frac{u}{2\pi }+\frac{1}{2})}\\
&\ \ \ \times \vartheta _{\frac{1}{2},\frac{1}{2}}\left(-\frac{q}{2\pi }-\frac{t}{2\pi }+\frac{v}{2\pi }-\frac{1}{2}+\left(\frac{p}{2\pi }+\frac{s}{2\pi }-\frac{u}{2\pi }+\frac{1}{2}\right)\left(-\frac{1}{\tau }\right),-\frac{1}{\tau }\right).
\end{align*}
Here $\vartheta _{\frac{1}{2},\frac{1}{2}}(z,\tau )$ is the theta function. It is known that the zeros of $\vartheta _{\frac{1}{2},\frac{1}{2}}(z,\tau )$ are $z=\alpha \tau +\beta \ (\alpha ,\beta \in \mathbb{Z})$. Thus, the condition 
\begin{equation*}
\displaystyle{\lim_{x\rightarrow \infty }}\displaystyle{\sum_{H\in \mathbb{Z}}}e^{\frac{\mathbf{i}}{2}(v-q-t)(2H-1)-\frac{\pi \mathbf{i}}{\tau }(H-2I+\frac{p}{2\pi }+\frac{s}{2\pi }-\frac{u}{2\pi })^2}\theta _{\varepsilon }(x-(-2\pi H+p-s))=0
\end{equation*}
is equivalent to that $-\frac{q}{2\pi }-\frac{t}{2\pi }+\frac{v}{2\pi }-\frac{1}{2}\in \mathbb{Z}$ and $\frac{p}{2\pi }+\frac{s}{2\pi }-\frac{u}{2\pi }+\frac{1}{2}\in \mathbb{Z}$, namely, $u\equiv p+s+\pi \ (\mathrm{mod} \ 2\pi \mathbb{Z})$ and $v\equiv q+t+\pi \ (\mathrm{mod}\ 2\pi \mathbb{Z})$. To calculate similarly as above, we can check that $\displaystyle{\lim_{x\rightarrow \pm \infty }}\tilde{\phi }_{12,J}(x)=0$ holds true if and only if $\tilde{C}_{12,J}=0$ for any $J\in \mathbb{Z}$ and $u$, $v$ satisfy the relations $u\equiv p+s+\pi \ (\mathrm{mod} \ 2\pi \mathbb{Z})$, $v\equiv q+t+\pi \ (\mathrm{mod} \ 2\pi \mathbb{Z})$. Clearly, for any $I,J\in \mathbb{Z}$, $\tilde{C}_{11,I}e^{\frac{\mathbf{i}}{\tau }(2\pi I-p+2u+\frac{\pi }{2}-q\tau +2v\tau )}=\tilde{C}_{12,I}$ and $\tilde{C}_{12,J}e^{\frac{\mathbf{i}}{\tau }(2\pi J-p+2u+\frac{3}{2}\pi -q\tau +2v\tau )}=\tilde{C}_{11,J+1}$ hold in the case of $\tilde{C}_{11,I}=0$ and $\tilde{C}_{12,J}=0$. Thus, for $E_{(\frac{1}{2},\frac{\eta}{2} )}\ (\eta =u+v\tau )$, $\mathrm{dim}F(C(\psi ))=1$ holds if and only if $u\equiv p+s+\pi \ (\mathrm{mod} \ 2\pi \mathbb{Z})$, $v\equiv q+t+\pi \ (\mathrm{mod} \ 2\pi \mathbb{Z})$, namely, $\eta \equiv \mu +\nu +\pi +\pi \tau \ (\mathrm{mod}\ 2\pi (\mathbb{Z}\oplus \tau \mathbb{Z}))$.
\end{proof}
We obtain the following corollary by setting $u=p+s+\pi $ and $v=q+t+\pi $ in the proof of Theorem \ref{the4.3}.
\begin{corollary}\label{cor4.4}
Under the condition of Theorem \ref{the4.3}, we can take the base of $F(C(\psi ))$ locally expressed as
\begin{equation*}
\tilde{\phi }:=\left(\begin{array}{ccc}\tilde{\phi }_{11}&\tilde{\phi }_{12}\\\tilde{\phi }_{21}&\tilde{\phi }_{22}\end{array}\right)=\left(\begin{array}{ccc}\tilde{\phi }_{11}(x,y)&\tilde{\phi }_{12}(x,y)\\\tilde{\phi }_{21}(x,y)&\tilde{\phi }_{22}(x,y)\end{array}\right),
\end{equation*}
\begin{flalign*}
&\tilde{\phi }_{11}(x,y)&\\
&=-\frac{\bar{\tau }-\tau }{2\tau }\mathbf{i}e^{-\frac{\mathbf{i}}{4\pi }(q-t-\pi )x-\frac{\mathbf{i}}{8\pi \tau }(p-s-\pi )^2}\displaystyle{\sum_{I\in \mathbb{Z}}}\displaystyle{\sum_{H\in \mathbb{Z}}}(-1)^{I+H}e^{-\mathbf{i}(q-t)I-\frac{\pi \mathbf{i}}{\tau }(H-2I-\frac{1}{2})^2}&\\
&\ \ \ \times \theta _{\varepsilon }(x-(-2\pi H+p-s))e^{\frac{\mathbf{i}}{8\pi \tau }(x+4\pi I-p+s+\pi )^2}e^{\mathbf{i}Iy},&\\
&\tilde{\phi }_{12}(x,y)&\\
&=-\frac{\bar{\tau }-\tau }{2\tau }e^{-\frac{\mathbf{i}}{4\pi }(q-t-\pi )x+\frac{\mathbf{i}}{2}y-\frac{\mathbf{i}}{8\pi \tau }(p-s-\pi )^2-\frac{\mathbf{i}}{2}(q-t)}&\\
&\ \ \ \times \displaystyle{\sum_{J\in \mathbb{Z}}}\displaystyle{\sum_{H\in \mathbb{Z}}}(-1)^{J+H}e^{-\mathbf{i}(q-t)J-\frac{\pi \mathbf{i}}{\tau }(H-2J-\frac{3}{2})^2}&\\
&\ \ \ \times \theta _{\varepsilon }(x-(-2\pi H+p-s))e^{\frac{\mathbf{i}}{8\pi \tau }(x+4\pi J-p+s+3\pi )^2}e^{\mathbf{i}Jy},&\\
&\tilde{\phi }_{21}(x,y)&\\
&=e^{\frac{\mathbf{i}}{4\pi }(q-t+\pi )x+\frac{\mathbf{i}}{8\pi \tau }(p-s+\pi )^2}\displaystyle{\sum_{K\in \mathbb{Z}}}(-1)^Ke^{-\mathbf{i}(q-t)K-\frac{\mathbf{i}}{8\pi \tau }(x-4\pi K-p+s-\pi )^2}e^{\mathbf{i}Ky},&\\
&\tilde{\phi }_{22}(x,y)&\\
&=-\mathbf{i}e^{\frac{\mathbf{i}}{4\pi }(q-t+\pi )x+\frac{\mathbf{i}}{2}y+\frac{\mathbf{i}}{8\pi \tau }(p-s+\pi )^2-\frac{\mathbf{i}}{2}(q-t)}&\\
&\ \ \ \times \displaystyle{\sum_{L\in \mathbb{Z}}}(-1)^Le^{-\mathbf{i}(q-t)L-\frac{\mathbf{i}}{8\pi \tau }(x-4\pi L-p+s-3\pi )^2}e^{\mathbf{i}Ly}.& 
\end{flalign*}
\end{corollary}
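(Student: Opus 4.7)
The plan is to extract the explicit basis directly from the computations already carried out inside the proof of Theorem \ref{the4.3}. In that proof, after imposing the holomorphicity equation $\tilde{d}\tilde{\phi}-\tilde{\phi}d=0$ and Fourier-expanding in $y$, each entry $\tilde{\phi}_{ij}$ was reduced to a first-order ODE in $x$ for its Fourier coefficients, and all four coefficient sequences $\tilde{\phi}_{11,I}(x)$, $\tilde{\phi}_{12,J}(x)$, $\tilde{\phi}_{21,K}(x)$, $\tilde{\phi}_{22,L}(x)$ were solved in closed form. The integration constants $\tilde{C}_{21,K}$, $\tilde{C}_{22,L}$ (and the overall normalization $\lambda=\frac{q}{2}\tau+\frac{t}{2}\tau-\frac{v}{2}\tau-\frac{\pi}{4}$ inside the $\theta_\varepsilon$-integral) were fixed by compatibility with the transition functions of $C(\psi)$ and $E_{(\frac{1}{2},\frac{\eta}{2})}$, and the convergence conditions (\ref{limit}) forced $\tilde{C}_{11,I}=\tilde{C}_{12,J}=0$ for all $I,J$. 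Since Theorem \ref{the4.3} identifies $\dim F(C(\psi))=1$ in the hypothesized range of $\eta$, any non-zero solution produced in this way automatically spans $F(C(\psi))$.

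Concretely, I would proceed in three steps. First, fix the representative $u=p+s+\pi$, $v=q+t+\pi$ among the allowed values; second, substitute these values into the closed-form expressions for $\tilde{\phi}_{21,K}(x)$ and $\tilde{\phi}_{22,L}(x)$ obtained in the proof of Theorem \ref{the4.3}, noting that $2t-v$ simplifies to $-(q-t+\pi)$ and that completing the square inside $e^{-\frac{\mathbf{i}}{8\pi\tau}(x-4\pi K+2s-u)^2}$ yields $e^{-\frac{\mathbf{i}}{8\pi\tau}(x-4\pi K-p+s-\pi)^2}$; reassembling the Fourier series in $y$ then gives the stated formulas for $\tilde{\phi}_{21}$ and $\tilde{\phi}_{22}$. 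Third, substitute the same values together with $\tilde{C}_{11,I}=\tilde{C}_{12,J}=0$ and the chosen $\lambda$ into the integral representations of $\tilde{\phi}_{11,I}(x)$ and $\tilde{\phi}_{12,J}(x)$, using the identity
\begin{equation*}
\int_{-\infty}^{x}e^{-\frac{\mathbf{i}}{\tau}(\frac{x^2}{4\pi}+(-\frac{p}{2\pi}+\frac{s}{2\pi}-\frac{q}{2\pi}\tau+\frac{t}{2\pi}\tau+H)x)}\psi_H(x)\,dx=e^{\frac{\mathbf{i}}{\tau}(\pi H^2+(-p+s-q\tau+t\tau)H+\lambda)}\theta_\varepsilon(x-(-2\pi H+p-s))
\end{equation*}
established in the proof of Theorem \ref{the4.3}; again assembling the Fourier series in $y$ reproduces the stated formulas for $\tilde{\phi}_{11}$ and $\tilde{\phi}_{12}$.

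The main obstacle is purely bookkeeping: one must carefully track how the half-integer shifts $\frac{1}{2}$ appearing inside the Fourier exponents for $\tilde{\phi}_{12}$ and $\tilde{\phi}_{22}$ combine with the substitution $u=p+s+\pi$, $v=q+t+\pi$ to produce the factors $(-1)^{I+H}$, $(-1)^{J+H}$, $(-1)^K$, $(-1)^L$ from $e^{\pi\mathbf{i}\cdot(\text{integer})}$, and how the quadratic completion in $x$ rearranges the overall prefactors into the clean form $e^{\pm\frac{\mathbf{i}}{4\pi}(q-t\pm\pi)x\pm\frac{\mathbf{i}}{8\pi\tau}(p-s\pm\pi)^2}$ displayed in the statement. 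Once these phase simplifications are executed consistently, the four formulas are read off directly, and by Theorem \ref{the4.3} the resulting non-zero $\tilde{\phi}$ is automatically a basis of the one-dimensional space $F(C(\psi))$.
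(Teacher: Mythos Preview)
Your proposal is correct and matches the paper's own argument: the paper simply states that the corollary is obtained by setting $u=p+s+\pi$ and $v=q+t+\pi$ in the explicit solutions produced during the proof of Theorem~\ref{the4.3}. Your three-step outline (fix the representative, substitute into the closed forms for $\tilde\phi_{21,K},\tilde\phi_{22,L}$, then into the integral representations of $\tilde\phi_{11,I},\tilde\phi_{12,J}$ with $\tilde C_{11,I}=\tilde C_{12,J}=0$) is exactly this specialization, and your remarks on the bookkeeping of the $(-1)$-phases and the square-completion are accurate descriptions of the algebra that the paper leaves implicit.
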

Similarly, for $\phi : C(\psi )\rightarrow E_{(\frac{1}{2},\frac{\eta}{2} )}$, we obtain the followings by setting $u=p+s+\pi $ and $v=q+t+\pi $.
\begin{theo}\label{the4.5}
For $E_{(\frac{1}{2},\frac{\eta}{2} )}$, $\mathrm{dim}G(C(\psi ))=1$ holds if and only if $\eta \equiv \mu +\nu +\pi +\pi \tau \ (\mathrm{mod}\ 2\pi (\mathbb{Z}\oplus \tau \mathbb{Z}))$. 
\end{theo}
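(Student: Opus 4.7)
The plan is to adapt the argument of Theorem \ref{the4.3} to the contravariant side, essentially by interchanging the roles of source and target. A non-trivial $\phi \in G(C(\psi))$ is locally a $2\times 2$ matrix $\phi = \begin{pmatrix}\phi_{11} & \phi_{12} \\ \phi_{21} & \phi_{22}\end{pmatrix}$ satisfying the holomorphicity equation $d\phi - \phi\tilde{d} = 0$, where $\tilde{d}=\begin{pmatrix}d_0 & \psi \\ 0 & d_1\end{pmatrix}$ is the differential of $C(\psi)$ and $d = d_{\frac{1}{2}} I_2$ is the differential on $E_{(\frac{1}{2},\frac{\eta}{2})}$. Expanding component-wise, the equation splits into two homogeneous ODEs $d_{\frac{1}{2}}\phi_{i1} - \phi_{i1} d_0 = 0$ $(i=1,2)$ describing the first column as a morphism $E_{(0,\mu)} \to E_{(\frac{1}{2},\frac{\eta}{2})}$, together with two inhomogeneous ODEs $d_{\frac{1}{2}}\phi_{i2} - \phi_{i2} d_1 = \phi_{i1}\psi$ for the second column, driven by the first through $\psi$.

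First I would use the transition functions (\ref{transition}), (\ref{s1}), (\ref{s2}) in the form $\phi|_{O_{3j}} = g^E_{31}\phi|_{O_{1j}}(g^C_{31})^{-1}$ and $\phi|_{O_{i3}} = g^E_{i3}\phi|_{O_{i1}}$ to Fourier-expand each $\phi_{ij}$ in $y$ and to derive the $x$-quasi-periodicities $\phi_{11,I}(x+2\pi) = \phi_{21,I-1}(x)$, $\phi_{21,K}(x+2\pi) = \phi_{11,K}(x)$, $\phi_{12,J}(x+2\pi) = \phi_{22,J}(x)$, $\phi_{22,L}(x+2\pi) = \phi_{12,L+1}(x)$. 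The homogeneous first-column ODEs yield Gaussian-type explicit solutions for $\phi_{11,I}(x)$ and $\phi_{21,K}(x)$ with arbitrary constants $C_{11,I}$, $C_{21,K}$ linked by the above quasi-periodicities; this reflects $\mathrm{dim}\,H^0(E_{(0,\mu)}, E_{(\frac{1}{2},\frac{\eta}{2})}) = 1$ (by Proposition \ref{pro3.3}) and leaves a one-parameter family.

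Plugging these into the second column and solving by variation of parameters produces particular solutions for $\phi_{12,J}$, $\phi_{22,L}$ involving exactly the same integrals $\int_{-\infty}^x e^{-\frac{\mathbf{i}}{\tau}(\frac{x^2}{4\pi} + (\cdots))}\psi_H(x)\,dx$ that arose in the proof of Theorem \ref{the4.3}, hence the same function $\theta_\varepsilon(x-(-2\pi H+p-s))$ and the same multiplier $\lambda_{\tau,H}$. The convergence conditions $\lim_{x\to \pm \infty}\phi_{12,J}(x) = \lim_{x\to \pm\infty}\phi_{22,L}(x) = 0$ are forced because $H^0(E_{(1,\nu)}, E_{(\frac{1}{2},\frac{\eta}{2})}) = 0$ by Proposition \ref{pro3.3} and because the homogeneous Gaussian factor $e^{\frac{\mathbf{i}}{\tau}\frac{x^2}{8\pi}}$ diverges since $\mathrm{Re}(\mathbf{i}/\tau) = \mathrm{Im}\,\tau/|\tau|^2 > 0$. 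The $x\to -\infty$ limit first kills the homogeneous arbitrary constants in the second column, and the $x\to +\infty$ limit reduces to the very same $\vartheta_{\frac{1}{2},\frac{1}{2}}(\,\cdot\,,-1/\tau)$-sum appearing in Theorem \ref{the4.3}, which vanishes precisely when $u \equiv p+s+\pi$ and $v \equiv q+t+\pi$ modulo $2\pi\mathbb{Z}$, i.e., $\eta \equiv \mu+\nu+\pi+\pi\tau \pmod{2\pi(\mathbb{Z}\oplus\tau\mathbb{Z})}$.

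The hard part will be the bookkeeping of signs, indices, and phase factors through the dual column-wise ODE system: one has to verify that the swapping of rows and columns relative to Theorem \ref{the4.3} still produces exactly the theta-function condition of that theorem, and in particular that the same choice $\lambda = \frac{q}{2}\tau + \frac{t}{2}\tau - \frac{v}{2}\tau - \frac{\pi}{4}$ remains consistent with the new quasi-periodicities derived here. Once this is verified, the single surviving first-column constant gives $\mathrm{dim}\,G(C(\psi)) = 1$ precisely under the congruence condition on $\eta$, matching Theorem \ref{the4.3}.
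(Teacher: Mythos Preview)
Your proposal is correct and follows exactly the route the paper intends: the paper states Theorem \ref{the4.5} with the words ``Similarly as above'' and records the resulting basis in Corollary \ref{cor4.6}, whose structure (first column $\phi_{11},\phi_{21}$ explicit Gaussians, second column $\phi_{12},\phi_{22}$ involving $\theta_\varepsilon$) confirms your identification of which pair of equations is homogeneous and which is inhomogeneous. The convergence condition on the second column produces the same $\vartheta_{\frac{1}{2},\frac{1}{2}}(\,\cdot\,,-1/\tau)$-vanishing as in Theorem \ref{the4.3}, exactly as you outline.
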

\begin{corollary}\label{cor4.6}
Under the condition of Theorem \ref{the4.5}, we can take the base of $G(C(\psi ))$ locally expressed as 
\begin{equation*}
\phi :=\left(\begin{array}{ccc}\phi _{11}&\phi _{12}\\\phi _{21}&\phi _{22}\end{array}\right)=\left(\begin{array}{ccc}\phi _{11}(x,y)&\phi _{12}(x,y)\\\phi _{21}(x,y)&\phi _{22}(x,y)\end{array}\right),
\end{equation*}
\begin{flalign*}
&\phi _{11}(x,y)&\\
&=e^{\frac{\mathbf{i}}{4\pi }(q-t-\pi )x+\frac{\mathbf{i}}{8\pi \tau }(p-s-\pi )^2}\displaystyle{\sum_{M\in \mathbb{Z}}}(-1)^Me^{-\mathbf{i}(q-t)M-\frac{\mathbf{i}}{8\pi \tau }(x-4\pi M-p+s+\pi )^2}e^{\mathbf{i}My},&\\
&\phi _{12}(x,y)&\\
&=\frac{\bar{\tau }-\tau }{2\tau }\mathbf{i}e^{-\frac{\mathbf{i}}{4\pi }(q-t+\pi )x-\frac{\mathbf{i}}{8\pi \tau }(p-s+\pi )^2}\displaystyle{\sum_{N\in \mathbb{Z}}}\displaystyle{\sum_{H\in \mathbb{Z}}}(-1)^{N+H}e^{-\mathbf{i}(q-t)N-\frac{\pi \mathbf{i}}{\tau }(H-2N+\frac{1}{2})^2}&\\
&\ \ \ \times \theta _{\varepsilon }(x-(-2\pi H+p-s))e^{\frac{\mathbf{i}}{8\pi \tau }(x+4\pi N-p+s-\pi )^2}e^{\mathbf{i}Ny},&\\
&\phi _{21}(x,y)&\\
&=\mathbf{i}e^{\frac{\mathbf{i}}{4\pi }(q-t+\pi )x+\frac{\mathbf{i}}{2}y+\frac{\mathbf{i}}{8\pi \tau }(p-s-\pi )^2-\frac{\mathbf{i}}{2}(q-t)}&\\
&\ \ \ \times \displaystyle{\sum_{P\in \mathbb{Z}}}(-1)^Pe^{-\mathbf{i}(q-t)P-\frac{\mathbf{i}}{8\pi \tau }(x-4\pi P-p+s-\pi )^2}e^{\mathbf{i}Py},&\\
&\phi _{22}(x,y)&\\
&=-\frac{\bar{\tau }-\tau }{2\tau }e^{-\frac{\mathbf{i}}{4\pi }(q-t+\pi )x+\frac{\mathbf{i}}{2}y-\frac{\mathbf{i}}{8\pi \tau }(p-s+\pi )^2-\frac{\mathbf{i}}{2}(q-t)}&\\
&\ \ \ \times \displaystyle{\sum_{Q\in \mathbb{Z}}}\displaystyle{\sum_{H\in \mathbb{Z}}}(-1)^{Q+H}e^{-\mathbf{i}(q-t)Q-\frac{\pi \mathbf{i}}{\tau }(H-2Q-\frac{1}{2})^2}&\\
&\ \ \ \times \theta _{\varepsilon }(x-(-2\pi H+p-s))e^{\frac{\mathbf{i}}{8\pi \tau }(x+4\pi Q-p+s+\pi )^2}e^{\mathbf{i}Qy}.&
\end{flalign*}
\end{corollary}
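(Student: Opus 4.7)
The plan is to obtain these formulas by mimicking the computation in the proof of Theorem \ref{the4.3} in the reverse direction $\phi : C(\psi ) \to E_{(\frac{1}{2},\frac{\eta }{2})}$, writing down the general solution of the ODE $d_{\frac{1}{2}}\phi - \phi \tilde d = 0$, and then specializing to $u=p+s+\pi$, $v=q+t+\pi$. Since this is exactly the condition pinned down by Theorem \ref{the4.5}, the corollary is essentially the bookkeeping that extracts the explicit base from that proof.

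I would begin by writing $\phi = (\phi _{ij})$ as a $2\times 2$ matrix of local functions. The transition functions of $C(\psi )$ (diagonal, with $e^{\mathbf{i}y}$ in the second slot on $O_{3j}\cap O_{1j}$ and trivial on $O_{i3}\cap O_{i1}$) together with those of $E_{(\frac{1}{2},\frac{\eta }{2})}$ (the off-diagonal $e^{\frac{\mathbf{i}}{2}y}$ swap on $O_{3j}\cap O_{1j}$ and the sign flip on $O_{i3}\cap O_{i1}$) fix the Fourier expansions: $\phi _{11},\phi _{21}$ expand with integer $y$-frequencies, while $\phi _{12},\phi _{22}$ carry a half-integer $e^{\frac{\mathbf{i}}{2}y}$ prefactor, and the $x$-shift relations couple their Fourier coefficients pairwise. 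Next, computing $d_{\frac{1}{2}}\phi - \phi \tilde d$ component-wise, the off-diagonal $\tilde\psi \, d\bar z$ entry of $\tilde d$ now sits to the \emph{right} of $\phi$, so it enters the second column of the equation, contributing source terms $\phi _{11}\tilde\psi $ and $\phi _{21}\tilde\psi $ to the $(1,2)$ and $(2,2)$ components respectively. Consequently, $\phi _{11,M}(x)$ and $\phi _{21,P}(x)$ satisfy source-free first-order linear ODEs with Gaussian solutions, while $\phi _{12,N}(x)$ and $\phi _{22,Q}(x)$ require primitives of the form $\int \psi _H(x)\cdot(\text{Gaussian})\,dx$, producing the same $\theta _{\varepsilon }$ kernels as in the proof of Theorem \ref{the4.3}, but attached to different matrix entries.

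Imposing $\lim_{x\to \pm \infty}\phi _{12,N}(x)=\lim_{x\to \pm \infty}\phi _{22,Q}(x)=0$ kills the free Gaussian constants in those two components and, by the zero locus of $\vartheta _{\frac{1}{2},\frac{1}{2}}$, forces $\eta \equiv \mu +\nu +\pi +\pi \tau \pmod{2\pi (\mathbb{Z}\oplus \tau \mathbb{Z})}$, thus re-deriving Theorem \ref{the4.5}. Specializing $u=p+s+\pi$, $v=q+t+\pi$ in the remaining one-parameter family of Gaussian constants, and fixing an overall normalization that selects one base vector of $G(C(\psi ))$, collapses the four components to the stated closed forms. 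The main obstacle I anticipate is not conceptual but the careful tracking of signs, phases, half-integer shifts, and Gaussian centers needed to match the author's specific normalization, exactly the sort of delicate bookkeeping already encountered in Corollary \ref{cor4.4}; beyond that, the computation is structurally identical to the ``Hom in the opposite direction'' of Theorem \ref{the4.3}.
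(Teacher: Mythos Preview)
Your approach is exactly the paper's: Theorem \ref{the4.5} and Corollary \ref{cor4.6} are obtained ``similarly'' to Theorem \ref{the4.3} and Corollary \ref{cor4.4}, by running the same ODE-and-convergence argument for $\phi \in H^0(C(\psi),E_{(\frac{1}{2},\frac{\eta}{2})})$ and then specializing to $u=p+s+\pi$, $v=q+t+\pi$. One small slip to fix before you compute: the $y$-periodicity conditions give integer frequencies on the \emph{first row} ($\phi_{11},\phi_{12}$) and the $e^{\frac{\mathbf{i}}{2}y}$ prefactor on the \emph{second row} ($\phi_{21},\phi_{22}$), not by column as you wrote---this is because $\phi$ is multiplied on the left by the target transition $\mathrm{diag}(1,-1)$ and on the right by the (trivial) source transition, so the sign flip hits rows; you can also read this off directly from the stated formulas in the corollary.
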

The holomorphic maps obtained by Theorem \ref{the4.3} (Corollary \ref{cor4.4}) and Theorem \ref{the4.5} (Corollary \ref{cor4.6}) actually satisfy $\phi \tilde{\phi }=c_{\tau }I_2\ (c_{\tau }\not=0,\ c_{\tau }\in \mathbb{C})$. Namely, $\frac{1}{\sqrt{c_{\tau }}}\phi $ and $\frac{1}{\sqrt{c_{\tau }}}\tilde{\phi } $ give the isomorphism $C(\psi )\cong E_{(\frac{1}{2},\frac{\eta }{2})}$. In order to show this fact, we propose the following lemmas. Note that $\tilde{\phi }_{11}(x,y)$ can be written as 
\begin{align*}
&\displaystyle{\sum_{H\in \mathbb{Z}}}(-1)^He^{-\frac{\pi \mathbf{i}}{\tau }(H-2I-\frac{1}{2})^2}\theta _{\varepsilon }(x-(-2\pi H+p-s))\\
&=\displaystyle{\sum_{H\in \mathbb{Z}}}(-1)^He^{-\frac{\pi \mathbf{i}}{\tau }(H-2I-\frac{1}{2})^2}\\
&\ \ \ \times \Bigl\{\theta _{\varepsilon }(x-(-2\pi H+p-s))-\theta _{\varepsilon }(x-(-4\pi I+p-s-\pi ))\Bigr\},
\end{align*}
since $\displaystyle{\sum_{H\in \mathbb{Z}}}(-1)^He^{-\frac{\pi \mathbf{i}}{\tau }(H-2I-\frac{1}{2})^2}=0$. Similar facts hold true also for $\tilde{\phi }_{12}(x,y)$, $\phi _{12}(x,y)$ and $\phi _{22}(x,y)$.
\begin{lemma}
Let $\mathbf{a}$ be a integer and $\mathbf{a}\not=0$. Then the following identity holds.
\begin{align}
&\displaystyle{\sum_{k\in \mathbb{Z}}}e^{\frac{2\pi \mathbf{i}k\mathbf{a}}{\tau }}\displaystyle{\sum_{l\in \mathbb{Z}}}(-1)^le^{-\frac{\pi \mathbf{i}}{\tau }(l+\frac{1}{2})^2} \notag \\
&\times \Bigl\{\theta _{\varepsilon }(x-(-2\pi k-2\pi l+p-s))-\theta _{\varepsilon }(x-(-2\pi k+p-s+\pi ))\Bigr\}=0. \label{id}
\end{align}
\end{lemma}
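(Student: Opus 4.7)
The plan is to split the LHS as $S_1 - S_2$ according to the two $\theta_\varepsilon$-terms, and to prove both $S_1=0$ and $S_2=0$ separately. The one analytic input throughout is the Jacobi identity
\begin{equation*}
\sum_{n\in\mathbb{Z}}(-1)^{n}e^{-\pi\mathbf{i}(n+\frac{1}{2})^{2}/\tau}=0,
\end{equation*}
which is the vanishing of the odd theta constant $\vartheta_{\frac{1}{2},\frac{1}{2}}(0,-\frac{1}{\tau})$ already used implicitly in the proof of Theorem \ref{the4.3}.

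The piece $S_{2}$, corresponding to $\theta_\varepsilon(x-(-2\pi k+p-s+\pi))$, is immediate because this $\theta_\varepsilon$-factor does not depend on $l$: the inner $l$-sum factors out as $\sum_{l}(-1)^{l}e^{-\pi\mathbf{i}(l+\frac{1}{2})^{2}/\tau}=0$, so $S_{2}=0$. For the piece $S_{1}$, corresponding to $\theta_\varepsilon(x-(-2\pi k-2\pi l+p-s))$, I would first substitute $m:=k+l$, which makes the $\theta_\varepsilon$-factor depend only on $m$, and then interchange the order of summation to obtain
\begin{equation*}
S_{1}=\sum_{m\in\mathbb{Z}}\theta_{\varepsilon}(x-(-2\pi m+p-s))\,e^{2\pi\mathbf{i}ma/\tau}\sum_{n\in\mathbb{Z}}(-1)^{n}e^{-\pi\mathbf{i}[(n+\frac{1}{2})^{2}+2na]/\tau},
\end{equation*}
where $n:=m-k$. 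Completing the square gives $(n+\frac{1}{2})^{2}+2na=(n+a+\frac{1}{2})^{2}-a(a+1)$, and the integer shift $n\mapsto n-a$ (permissible since $a\in\mathbb{Z}$, producing a factor $(-1)^{a}$) reduces the inner sum to $(-1)^{a}e^{\pi\mathbf{i}a(a+1)/\tau}\sum_{n}(-1)^{n}e^{-\pi\mathbf{i}(n+\frac{1}{2})^{2}/\tau}=0$. Hence $S_{1}=0$, and the identity follows.

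The main technical obstacle is justifying the interchange of summation in $S_{1}$, because when $\mathbf{a}\neq 0$ the outer factor $|e^{2\pi\mathbf{i}k\mathbf{a}/\tau}|$ grows exponentially in $|k|$ and naive absolute convergence of the double sum fails. The resolution is that for each fixed $k$ the inner $l$-sum actually has super-Gaussian decay in $|k|$: the Jacobi identity lets one rewrite it as $\sum_{l}(-1)^{l}e^{-\pi\mathbf{i}(l+\frac{1}{2})^{2}/\tau}[\theta_{\varepsilon}(\cdot)-1]$, and for $k$ large positive the bracket is nonzero only for $l<(p-s-x-\varepsilon)/(2\pi)-k$, forcing $|l+\frac{1}{2}|$ to be of order $|k|$; the Gaussian weight $|e^{-\pi\mathbf{i}(l+\frac{1}{2})^{2}/\tau}|=e^{-\pi\tau_{2}(l+\frac{1}{2})^{2}/|\tau|^{2}}$ (with $\tau_{2}=\mathrm{Im}\,\tau>0$) then bounds the inner sum by $Ce^{-ck^{2}}$. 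For $k$ large negative the same bound holds from the original unsubtracted form, in which the nonzero $l$'s satisfy $l\geq -k+O(1)$. This super-Gaussian decay dominates the exponential growth of the outer factor, makes the iterated sum absolutely convergent, and so legitimizes the reordering used above.
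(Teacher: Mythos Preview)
Your approach is genuinely different from the paper's. The paper never manipulates the double sum directly; instead it observes that $\det\tilde\phi=\tilde\phi_{11}\tilde\phi_{22}-\tilde\phi_{12}\tilde\phi_{21}$ is a smooth section satisfying the scalar equation
\[
2\bar\partial(\det\tilde\phi)+\tfrac{\mathbf{i}}{\pi(\bar\tau-\tau)}(\pi+\pi\tau)\det\tilde\phi\,d\bar z=0,
\]
solves this ODE mode-by-mode, and uses the periodicity $\tilde\Phi_{\mathbf a}(x+2\pi)=-\tilde\Phi_{\mathbf a}(x)$ to force all Fourier coefficients $\tilde C_{\mathbf a}$ with $\mathbf a\neq-1$ to vanish. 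The identity then drops out by comparing with the explicit series from Corollary~\ref{cor4.4}. This route bypasses all reordering questions because one is always computing Fourier coefficients of an a~priori smooth function. Your route, by contrast, is elementary and self-contained: it needs only the vanishing of the odd theta constant, and it makes transparent \emph{why} the identity holds term-by-term.

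There is, however, a genuine gap in your convergence paragraph. You correctly show that the inner $l$-sum, as a function of $k$, decays like $e^{-ck^{2}}$, so that $\sum_{k}|\,\text{inner}(k)\,|<\infty$. But absolute convergence of an \emph{iterated} sum does not by itself legitimize reordering; one needs absolute convergence of the \emph{double} sum, and as you note this fails for the raw expression. The fix is easy and uses exactly your ingredients: do not mix the two forms depending on $k$, but choose one form globally according to the sign of $\mathbf a$. For $\mathbf a>0$ work with the subtracted form $[\theta_{\varepsilon}-1]$ throughout; then the support constraint $k+l<C$ combined with the Gaussian in $l$ gives
\[
\sum_{k,l}\bigl|e^{2\pi\mathbf{i}k\mathbf a/\tau}\bigr|\,e^{-\pi\tau_{2}(l+\frac12)^{2}/|\tau|^{2}}\,\bigl|\theta_{\varepsilon}-1\bigr|
=\sum_{m<C}\sum_{k}e^{\pi c[2k\mathbf a-(m-k+\frac12)^{2}]}
=C'\sum_{m<C}e^{2\pi m\mathbf a c}<\infty,
\]
a convergent geometric tail; for $\mathbf a<0$ the unsubtracted form gives the analogous bound with $m>C'$. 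In either case Fubini applies, the inner $n$-sum after your change of variables is exactly $(-1)^{\mathbf a}e^{\pi\mathbf{i}\mathbf a(\mathbf a+1)/\tau}\vartheta_{\frac12,\frac12}(0,-1/\tau)=0$, and $S_{1}=0$ follows rigorously. With this correction your argument is complete and arguably cleaner than the paper's.
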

\begin{proof}
The holomorphic map $\tilde{\phi } : E_{(\frac{1}{2},\frac{\eta}{2} )}\rightarrow C(\psi )$ satisfies the following differential equations.
\begin{align*}
&d_0\tilde{\phi }_{11}-\tilde{\phi }_{11}d_{\frac{1}{2}}+\psi \tilde{\phi }_{21}d\bar{z}=0,\\
&d_0\tilde{\phi }_{12}-\tilde{\phi }_{12}d_{\frac{1}{2}}+\psi \tilde{\phi }_{22}d\bar{z}=0,\\
&d_1\tilde{\phi }_{21}-\tilde{\phi }_{21}d_{\frac{1}{2}}=0,\\
&d_1\tilde{\phi }_{22}-\tilde{\phi }_{22}d_{\frac{1}{2}}=0.
\end{align*}
They are expressed locally as follows by using $d_0=2\bar{\partial }-\frac{\mathbf{i}}{\pi (\bar{\tau }-\tau )}\mu d\bar{z}$, $d_1=2\bar{\partial }-\frac{\mathbf{i}}{\pi (\bar{\tau }-\tau )}(x+\nu )d\bar{z}$ and $d_{\frac{1}{2}}=2\bar{\partial }-\frac{\mathbf{i}}{\pi (\bar{\tau }-\tau )}(\frac{1}{2}x+\frac{\eta }{2})d\bar{z}\ (\mu =p+q\tau ,\ \nu =s+t\tau ,\ \eta =u+v\tau ,\ u=p+s+\pi ,\ v=q+t+\pi )$.
\begin{align}
&2\bar{\partial }(\tilde{\phi }_{11})-\frac{\mathbf{i}}{\pi (\bar{\tau }-\tau )}\left(-\frac{1}{2}x+\mu -\frac{\eta }{2}\right)\tilde{\phi }_{11}d\bar{z}+\psi \tilde{\phi }_{21}d\bar{z}=0,\label{d1}\\ 
&2\bar{\partial }(\tilde{\phi }_{12})-\frac{\mathbf{i}}{\pi (\bar{\tau }-\tau )}\left(-\frac{1}{2}x+\mu -\frac{\eta }{2}\right)\tilde{\phi }_{12}d\bar{z}+\psi \tilde{\phi }_{22}d\bar{z}=0,\label{d2}\\ 
&2\bar{\partial }(\tilde{\phi }_{21})-\frac{\mathbf{i}}{\pi (\bar{\tau }-\tau )}\left(\frac{1}{2}x+\nu -\frac{\eta }{2}\right)\tilde{\phi }_{21}d\bar{z}=0,\label{d3}\\ 
&2\bar{\partial }(\tilde{\phi }_{22})-\frac{\mathbf{i}}{\pi (\bar{\tau }-\tau )}\left(\frac{1}{2}x+\nu -\frac{\eta }{2}\right)\tilde{\phi }_{22}d\bar{z}=0.\label{d4}
\end{align}
We obtain the following differential equations by multiplying $\tilde{\phi }_{22}$, $\tilde{\phi }_{21}$, $\tilde{\phi }_{12}$, $\tilde{\phi }_{11}$ to the differential equations (\ref{d1}), (\ref{d2}), (\ref{d3}), (\ref{d4}), respectively.
\begin{align}
&\tilde{\phi }_{22}2\bar{\partial }(\tilde{\phi }_{11})-\frac{\mathbf{i}}{\pi (\bar{\tau }-\tau )}\left(-\frac{1}{2}x+\mu -\frac{\eta }{2}\right)\tilde{\phi }_{11}\tilde{\phi }_{22}d\bar{z}+\psi \tilde{\phi }_{21}\tilde{\phi }_{22}d\bar{z}=0,\label{d5}\\ 
&\tilde{\phi }_{21}2\bar{\partial }(\tilde{\phi }_{12})-\frac{\mathbf{i}}{\pi (\bar{\tau }-\tau )}\left(-\frac{1}{2}x+\mu -\frac{\eta }{2}\right)\tilde{\phi }_{12}\tilde{\phi }_{21}d\bar{z}+\psi \tilde{\phi }_{21}\tilde{\phi }_{22}d\bar{z}=0,\label{d6}\\ 
&\tilde{\phi }_{12}2\bar{\partial }(\tilde{\phi }_{21})-\frac{\mathbf{i}}{\pi (\bar{\tau }-\tau )}\left(\frac{1}{2}x+\nu -\frac{\eta }{2}\right)\tilde{\phi }_{12}\tilde{\phi }_{21}d\bar{z}=0,\label{d7}\\ 
&\tilde{\phi }_{11}2\bar{\partial }(\tilde{\phi }_{22})-\frac{\mathbf{i}}{\pi (\bar{\tau }-\tau )}\left(\frac{1}{2}x+\nu -\frac{\eta }{2}\right)\tilde{\phi }_{11}\tilde{\phi }_{22}d\bar{z}=0.\label{d8}
\end{align}
The sum $(\ref{d5})+(\ref{d8})-(\ref{d6})-(\ref{d7})$ turns out to be the following differential equation.
\begin{equation}
2\bar{\partial }(\tilde{\phi }_{11}\tilde{\phi }_{22}-\tilde{\phi }_{12}\tilde{\phi }_{21})+\frac{\mathbf{i}}{\pi (\bar{\tau }-\tau )}(\pi +\pi \tau )(\tilde{\phi }_{11}\tilde{\phi }_{22}-\tilde{\phi }_{12}\tilde{\phi }_{21})d\bar{z}=0. \label{diff}
\end{equation}
We Fourier-expand $\tilde{\phi }_{11}$, $\tilde{\phi }_{12}$, $\tilde{\phi }_{21}$, $\tilde{\phi }_{22}$ as 
 \begin{align*}
&\tilde{\phi }_{11}=\displaystyle{\sum_{I\in \mathbb{Z}}}\tilde{\phi }_{11,I}(x)e^{\mathbf{i}Iy},\ \tilde{\phi }_{12}=e^{\frac{\mathbf{i}}{2}y}\displaystyle{\sum_{J\in \mathbb{Z}}}\tilde{\phi }_{12,J}(x)e^{\mathbf{i}Jy},\\
&\tilde{\phi }_{21}=\displaystyle{\sum_{K\in \mathbb{Z}}}\tilde{\phi }_{21,K}(x)e^{\mathbf{i}Ky},\ \tilde{\phi }_{22}=e^{\frac{\mathbf{i}}{2}y}\displaystyle{\sum_{L\in \mathbb{Z}}}\tilde{\phi }_{22,L}(x)e^{\mathbf{i}Ly},
\end{align*}
and calculate $\mathrm{det}\tilde{\phi }=\tilde{\phi }_{11}\tilde{\phi }_{22}-\tilde{\phi }_{12}\tilde{\phi }_{21}$.
\begin{flalign*}
&\mathrm{det}\tilde{\phi }&\\
&=\Bigl(\displaystyle{\sum_{I\in \mathbb{Z}}}\tilde{\phi }_{11,I}(x)e^{\mathbf{i}Iy}\Bigr)\Bigl(e^{\frac{\mathbf{i}}{2}y}\displaystyle{\sum_{J\in \mathbb{Z}}}\tilde{\phi }_{12,J}(x)e^{\mathbf{i}Jy}\Bigr)&\\
&\ \ \ -\Bigl(\displaystyle{\sum_{K\in \mathbb{Z}}}\tilde{\phi }_{21,K}(x)e^{\mathbf{i}Ky}\Bigr)\Bigl(e^{\frac{\mathbf{i}}{2}y}\displaystyle{\sum_{L\in \mathbb{Z}}}\tilde{\phi }_{22,L}(x)e^{\mathbf{i}Ly}\Bigr)&\\
&=e^{\frac{\mathbf{i}}{2}y}\displaystyle{\sum_{I\in \mathbb{Z}}\sum_{L\in \mathbb{Z}}}\tilde{\phi }_{11,I}(x)\tilde{\phi }_{22,L}(x)e^{\mathbf{i}(I+L)y}-e^{\frac{\mathbf{i}}{2}y}\displaystyle{\sum_{J\in \mathbb{Z}}\sum_{K\in \mathbb{Z}}}\tilde{\phi }_{12,J}(x)\tilde{\phi }_{21,K}(x)e^{\mathbf{i}(J+K)y}&\\
&=e^{\frac{\mathbf{i}}{2}y}\displaystyle{\sum_{\mathbf{a}\in \mathbb{Z}}\sum_{I\in \mathbb{Z}}}\tilde{\phi }_{11,I}(x)\tilde{\phi }_{22,\mathbf{a} -I}(x)e^{\mathbf{i}\mathbf{a}y}-e^{\frac{\mathbf{i}}{2}y}\displaystyle{\sum_{\mathbf{a}\in \mathbb{Z}}\sum_{J\in \mathbb{Z}}}\tilde{\phi }_{12,J}(x)\tilde{\phi }_{21,\mathbf{a}-J}(x)e^{\mathbf{i}\mathbf{a}y}&\\
&=e^{\frac{\mathbf{i}}{2}y}\displaystyle{\sum_{\mathbf{a}\in \mathbb{Z}}}\Bigl\{\displaystyle{\sum_{I\in \mathbb{Z}}}\tilde{\phi }_{11,I}(x)\tilde{\phi }_{22,\mathbf{a}-I}(x)-\displaystyle{\sum_{J\in \mathbb{Z}}}\tilde{\phi }_{12,J}(x)\tilde{\phi }_{21,\mathbf{a}-J}(x)\Bigr\}e^{\mathbf{i}\mathbf{a}y}.&
\end{flalign*}
In the third equality, we put $I+L=J+K=\mathbf{a}$. Here, we define the function $\tilde{\Phi }_{\mathbf{a}}(x)$ as follows.
\begin{equation*}
\tilde{\Phi }_{\mathbf{a} }(x):=\displaystyle{\sum_{I\in \mathbb{Z}}}\tilde{\phi }_{11,I}(x)\tilde{\phi }_{22,\mathbf{a} -I}(x)-\displaystyle{\sum_{J\in \mathbb{Z}}}\tilde{\phi }_{12,J}(x)\tilde{\phi }_{21,\mathbf{a} -J}(x).
\end{equation*}
Namely, $\mathrm{det}\tilde{\phi }$ is given by $e^{\frac{\mathbf{i}}{2}y}\displaystyle{\sum_{\mathbf{a} \in \mathbb{Z}}}\tilde{\Phi }_{\mathbf{a} }(x)e^{\mathbf{i}\mathbf{a} y}$. By substituting $\tilde{\phi }_{11}\tilde{\phi }_{22}-\tilde{\phi }_{12}\tilde{\phi }_{21}=e^{\frac{\mathbf{i}}{2}y}\displaystyle{\sum_{\mathbf{a} \in \mathbb{Z}}}\tilde{\Phi }_{\mathbf{a} }(x)e^{\mathbf{i}\mathbf{a} y}$ into the differential equation (\ref{diff}), one obtains
\begin{equation*}
-\frac{2\tau }{\bar{\tau }-\tau }e^{\frac{\mathbf{i}}{2}y}\displaystyle{\sum_{\mathbf{a} \in \mathbb{Z}}}\Bigl\{\frac{d}{dx}\tilde{\Phi }_{\mathbf{a}}(x)-\frac{\mathbf{i}}{\tau }\left(\mathbf{a} +1+\frac{\tau }{2}\right)\tilde{\Phi }_{\mathbf{a} }(x)\Bigr\}e^{\mathbf{i}\mathbf{a} y}d\bar{z}=0.
\end{equation*}
Thus, we solve the differential equation
\begin{equation*}
\frac{d}{dx}\tilde{\Phi }_{\mathbf{a} }(x)-\frac{\mathbf{i}}{\tau }\left(\mathbf{a}+1+\frac{\tau }{2}\right)\tilde{\Phi }_{\mathbf{a} }(x)=0.
\end{equation*}
The general solution of this differential equation is expressed as
\begin{equation*}
\tilde{\Phi }_{\mathbf{a}}(x)=\tilde{C}_{\mathbf{a}}e^{\frac{\mathbf{i}}{\tau }(\mathbf{a}+1+\frac{\tau }{2})x},
\end{equation*}
where $\tilde{C}_{\mathbf{a}}$ is an arbitrary constant. Recall that $\tilde{\phi }_{11,I}(x)$, $\tilde{\phi }_{12,J}(x)$, $\tilde{\phi }_{21,K}(x)$ and $\tilde{\phi }_{22,L}(x)$ satisfy the relations
\begin{align*}
&\tilde{\phi }_{11,I}(x+2\pi )=\tilde{\phi }_{12,I}(x),\ \tilde{\phi }_{12,J}(x+2\pi )=\tilde{\phi }_{11,J+1}(x),\\
&\tilde{\phi }_{21,K}(x+2\pi )=\tilde{\phi }_{22,K-1}(x),\ \tilde{\phi }_{22,L}(x+2\pi )=\tilde{\phi }_{21,L}(x). 
\end{align*}
Therefore, 
\begin{align*}
\tilde{\Phi }_{\mathbf{a}}(x+2\pi )&=\displaystyle{\sum_{I\in \mathbb{Z}}}\tilde{\phi }_{11,I}(x+2\pi )\tilde{\phi }_{22,\mathbf{a}-I}(x+2\pi )-\displaystyle{\sum_{J\in \mathbb{Z}}}\tilde{\phi }_{12,J}(x+2\pi )\tilde{\phi }_{21,\mathbf{a}-J}(x+2\pi )\\
&=\displaystyle{\sum_{I\in \mathbb{Z}}}\tilde{\phi }_{12,I}(x)\tilde{\phi }_{21,\mathbf{a} -I}(x)-\displaystyle{\sum_{J\in \mathbb{Z}}}\tilde{\phi }_{11,J+1}(x)\tilde{\phi }_{22,\mathbf{a} -J-1}(x)\\
&=\displaystyle{\sum_{I\in \mathbb{Z}}}\tilde{\phi }_{12,I}(x)\tilde{\phi }_{21,\mathbf{a} -I}(x)-\displaystyle{\sum_{J\in \mathbb{Z}}}\tilde{\phi }_{11,J}(x)\tilde{\phi }_{22,\mathbf{a} -J}(x)\\
&=-\Bigl(\displaystyle{\sum_{J\in \mathbb{Z}}}\tilde{\phi }_{11,J}(x)\tilde{\phi }_{22,\mathbf{a} -J}(x)-\displaystyle{\sum_{I\in \mathbb{Z}}}\tilde{\phi }_{12,I}(x)\tilde{\phi }_{21,\mathbf{a} -I}(x)\Bigr)\\
&=-\tilde{\Phi }_{\mathbf{a}}(x).
\end{align*}
Since
\begin{align*}
\tilde{\Phi }_{\mathbf{a}}(x+2\pi )&=\tilde{C}_{\mathbf{a}}e^{\frac{\mathbf{i}}{\tau }(\mathbf{a}+1+\frac{\tau }{2})(x+2\pi )}\\
&=\tilde{C}_{\mathbf{a}}e^{\frac{\mathbf{i}}{\tau }(\mathbf{a}+1+\frac{\tau }{2})x}(-e^{\frac{\mathbf{i}}{\tau }(2\pi \mathbf{a}+2\pi )})\\
&=-e^{\frac{2\pi \mathbf{i}}{\tau }(\mathbf{a}+1)}\tilde{\Phi }_{\mathbf{a}}(x),
\end{align*}
one has $e^{\frac{2\pi \mathbf{i}}{\tau }(\mathbf{a}+1)}=1$, namely, $\mathbf{a}=-1$. Therefore, $\tilde{C}_{\mathbf{a}}$ satisfies $\tilde{C}_{\mathbf{a}}=0\ (\mathbf{a}\not=-1)$. By Corollary \ref{cor4.4}, $\mathrm{det}\tilde{\phi }$ is expressed locally as
\begin{align*}
&\mathrm{det}\tilde{\phi }=\frac{\bar{\tau }-\tau }{2\tau }e^{\frac{\mathbf{i}}{2}y-\frac{\mathbf{i}}{2\tau }(p-s+2\pi )-\frac{\mathbf{i}}{2}(q-t)}\displaystyle{\sum_{\mathbf{a}\in \mathbb{Z}}}\displaystyle{\sum_{k\in \mathbb{Z}}}\displaystyle{\sum_{l\in \mathbb{Z}}}(-1)^{\mathbf{a}+l}e^{-\mathbf{i}(q-t)\mathbf{a}-\frac{\pi \mathbf{i}}{\tau }(l+\frac{1}{2})^2}&\\
&\ \ \ \ \ \ \ \ \ \times \Bigl\{\theta _{\varepsilon }(x-(-2\pi k-2\pi l+p-s))-\theta _{\varepsilon }(x-(-2\pi k+p-s+\pi ))\Bigr\}&\\
&\ \ \ \ \ \ \ \ \ \times e^{\frac{\mathbf{i}}{\tau }((\mathbf{a}+1+\frac{\tau }{2})x-2\pi \mathbf{a}^2+2\pi k\mathbf{a}-(p-s+5\pi )\mathbf{a}+2\pi k-2\pi )}e^{\mathbf{i}\mathbf{a}y},&\\
&\tilde{C}_{\mathbf{a}}=\frac{\bar{\tau }-\tau }{2\tau }e^{-\frac{\mathbf{i}}{2\tau }(p-s+2\pi )-\frac{\mathbf{i}}{2}(q-t)}\displaystyle{\sum_{k\in \mathbb{Z}}}\displaystyle{\sum_{l\in \mathbb{Z}}}(-1)^{\mathbf{a}+l}e^{-\mathbf{i}(q-t)\mathbf{a}-\frac{\pi \mathbf{i}}{\tau }(l+\frac{1}{2})^2}&\\
&\ \ \ \ \ \ \ \ \ \times \Bigl\{\theta _{\varepsilon }(x-(-2\pi k-2\pi l+p-s))-\theta _{\varepsilon }(x-(-2\pi k+p-s+\pi ))\Bigr\}&\\
&\ \ \ \ \ \ \ \ \ \times e^{\frac{\mathbf{i}}{\tau }(-2\pi \mathbf{a}^2+2\pi k\mathbf{a}-(p-s+5\pi )\mathbf{a}+2\pi k-2\pi )}.
\end{align*}
Clearly, the formula $\mathrm{det}\tilde{\phi }=e^{\frac{\mathbf{i}}{2}y}\displaystyle{\sum_{\mathbf{a}\in \mathbb{Z}}}\tilde{C}_{\mathbf{a}}e^{\frac{\mathbf{i}}{\tau }(\mathbf{a}+1+\frac{\tau }{2})x}e^{\mathbf{i}\mathbf{a}y},\ \tilde{C}_{\mathbf{a}}=0\ (\mathbf{a}\not=-1)$ is equivalent to the formula $\mathrm{det}\tilde{\phi }=e^{\frac{\mathbf{i}}{2}y}\displaystyle{\sum_{\mathbf{a}\in \mathbb{Z}}}\tilde{C}_{\mathbf{a}-1}e^{\frac{\mathbf{i}}{\tau }(\mathbf{a}+\frac{\tau }{2})x}e^{\mathbf{i}(\mathbf{a}-1)y},\ \tilde{C}_{\mathbf{a}-1}=0\ (\mathbf{a}\not=0)$, and $\tilde{C}_{\mathbf{a}-1}$ is given by
\begin{align*}
&\tilde{C}_{\mathbf{a}-1}=-\frac{\bar{\tau }-\tau }{2\tau }(-1)^{\mathbf{a}}e^{\frac{\mathbf{i}}{2\tau }(p-s)+\frac{\mathbf{i}}{2}(q-t)+\frac{\mathbf{i}}{\tau }(-2\pi \mathbf{a}^2-(p-s+\pi )\mathbf{a})-\mathbf{i}(q-t)\mathbf{a}}&\\
&\ \ \ \ \ \ \ \ \ \ \times \displaystyle{\sum_{k\in \mathbb{Z}}}e^{\frac{2\pi \mathbf{i}k\mathbf{a}}{\tau }}\displaystyle{\sum_{l\in \mathbb{Z}}}(-1)^le^{-\frac{\pi \mathbf{i}}{\tau }(l+\frac{1}{2})^2}&\\
&\ \ \ \ \ \ \ \ \ \ \times \Bigl\{\theta _{\varepsilon }(x-(-2\pi k-2\pi l+p-s))-\theta _{\varepsilon }(x-(-2\pi k+p-s+\pi ))\Bigr\}.&
\end{align*}
Thus, the condition $\tilde{C}_{\mathbf{a}-1}=0\ (\mathbf{a}\not=0)$ implies the identity (\ref{id}).
\end{proof}
\begin{lemma}\label{lem4.8}
$\displaystyle{\sum_{l\in \mathbb{Z}}}(-1)^l(2l+1)e^{-\frac{\pi \mathbf{i}}{\tau }(l+\frac{1}{2})^2}\not=0.$
\end{lemma}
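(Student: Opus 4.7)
The plan is to identify the sum as a nonzero multiple of the derivative at the origin of the odd Jacobi theta function $\vartheta_{\frac{1}{2},\frac{1}{2}}$, which has already been used in the proof of Theorem \ref{the4.3}. Concretely, I would set $\tau':=-\frac{1}{\tau}$, which lies in $\mathbb{H}$ whenever $\tau$ does, and compute $\partial_{z}\vartheta_{\frac{1}{2},\frac{1}{2}}(z,\tau')$ by termwise differentiation of the defining series. Using the identity $e^{\pi\mathbf{i}(n+\frac{1}{2})}=(-1)^{n}\mathbf{i}$ to simplify the exponential factor evaluated at $z=0$, one obtains
\begin{equation*}
\partial_{z}\vartheta_{\frac{1}{2},\frac{1}{2}}(0,\tau')=-\pi\sum_{n\in\mathbb{Z}}(-1)^{n}(2n+1)\,e^{\pi\mathbf{i}(n+\frac{1}{2})^{2}\tau'}.
\end{equation*}
Substituting $\tau'=-1/\tau$ identifies the sum in Lemma \ref{lem4.8} with $-\frac{1}{\pi}\partial_{z}\vartheta_{\frac{1}{2},\frac{1}{2}}(0,-\tfrac{1}{\tau})$.

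The problem thereby reduces to the classical statement that $\partial_{z}\vartheta_{\frac{1}{2},\frac{1}{2}}(0,\tau')\neq 0$ for all $\tau'\in\mathbb{H}$. I would deduce this from the fact, already recalled in the proof of Theorem \ref{the4.3}, that the zero locus of $\vartheta_{\frac{1}{2},\frac{1}{2}}(z,\tau')$ is exactly the lattice $\mathbb{Z}+\tau'\mathbb{Z}$; together with the oddness $\vartheta_{\frac{1}{2},\frac{1}{2}}(-z,\tau')=-\vartheta_{\frac{1}{2},\frac{1}{2}}(z,\tau')$ and the standard fact that these zeros are simple, this forces the simple zero at $z=0$ and hence a nonzero derivative there. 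If more explicit justification is desired, the Jacobi triple product
\begin{equation*}
\vartheta_{\frac{1}{2},\frac{1}{2}}(z,\tau')=-2q^{1/8}\sin(\pi z)\prod_{n=1}^{\infty}(1-q^{n})(1-q^{n}e^{2\pi\mathbf{i}z})(1-q^{n}e^{-2\pi\mathbf{i}z}),\qquad q=e^{2\pi\mathbf{i}\tau'},
\end{equation*}
exhibits the simple zero at $z=0$ via the $\sin(\pi z)$ factor, while the remaining infinite product converges and is nonzero at $z=0$ since $|q|<1$.

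There is no serious obstacle. The only point requiring care is to perform the series manipulation in a manner consistent with the precise normalization of $\vartheta_{\frac{1}{2},\frac{1}{2}}$ introduced in the proof of Theorem \ref{the4.3}, so that the identification of the sum with $\partial_{z}\vartheta_{\frac{1}{2},\frac{1}{2}}(0,-\tfrac{1}{\tau})$ is unambiguous; once that identification is in hand, the non-vanishing is immediate from classical theta theory.
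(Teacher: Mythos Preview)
Your proposal is correct. The first step---identifying the sum with $-\tfrac{1}{\pi}\,\partial_{z}\vartheta_{\frac{1}{2},\frac{1}{2}}(0,-\tfrac{1}{\tau})$ via termwise differentiation---coincides exactly with what the paper does. The only divergence is in establishing the nonvanishing of this derivative: the paper invokes Jacobi's derivative formula
\[
\left.\partial_{\beta}\vartheta_{\frac{1}{2},\frac{1}{2}}\bigl(\beta,-\tfrac{1}{\tau}\bigr)\right|_{\beta=0}=-\pi\,\vartheta_{0,0}\bigl(0,-\tfrac{1}{\tau}\bigr)\,\vartheta_{0,\frac{1}{2}}\bigl(0,-\tfrac{1}{\tau}\bigr)\,\vartheta_{\frac{1}{2},0}\bigl(0,-\tfrac{1}{\tau}\bigr)
\]
and then checks that none of the three theta nullwerte vanish by inspecting their zero loci. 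Your route---arguing that $z=0$ is a \emph{simple} zero of $\vartheta_{\frac{1}{2},\frac{1}{2}}(\,\cdot\,,\tau')$, either from the known lattice of zeros plus oddness or directly from the Jacobi triple product---is equally valid and arguably more self-contained, since it avoids importing the derivative identity and needing to know the zero sets of the three auxiliary thetas. Both arguments sit at the same level of classical theta theory; neither is more elementary in any substantial sense.
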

\begin{proof}
We consider the following theta function, where $\beta \in \mathbb{C}$.
\begin{equation*}
\vartheta _{\frac{1}{2},\frac{1}{2}}\left(\beta ,-\frac{1}{\tau } \right)=\displaystyle{\sum_{l\in \mathbb{Z}}}e^{\pi \mathbf{i}(l+\frac{1}{2})^2(-\frac{1}{\tau }) +2\pi \mathbf{i}(l+\frac{1}{2})(\beta +\frac{1}{2})}.
\end{equation*}
We differentiate it with respect to $\beta $.
\begin{align*}
\frac{\partial }{\partial \beta }\vartheta _{\frac{1}{2},\frac{1}{2}}\left(\beta ,-\frac{1}{\tau } \right)&=\displaystyle{\sum_{l\in \mathbb{Z}}}2\pi \mathbf{i}\left(l+\frac{1}{2}\right)e^{\pi \mathbf{i}(l+\frac{1}{2})^2(-\frac{1}{\tau }) +2\pi \mathbf{i}(l+\frac{1}{2})(\beta +\frac{1}{2})}\\
&=\pi \mathbf{i}\displaystyle{\sum_{l\in \mathbb{Z}}}(2l+1)e^{\pi \mathbf{i}(l+\frac{1}{2})^2(-\frac{1}{\tau }) +2\pi \mathbf{i}(l+\frac{1}{2})(\beta +\frac{1}{2})}.
\end{align*}
Hence, it can be seen that 
\begin{align*}
\left.\frac{\partial }{\partial \beta }\vartheta _{\frac{1}{2},\frac{1}{2}}\left(\beta ,-\frac{1}{\tau } \right)\right|_{\beta =0}&=\pi \mathbf{i}\displaystyle{\sum_{l\in \mathbb{Z}}}(2l+1)e^{\pi \mathbf{i}(l+\frac{1}{2})^2(-\frac{1}{\tau }) +\pi \mathbf{i}(l+\frac{1}{2})}\\
&=-\pi \displaystyle{\sum_{l\in \mathbb{Z}}}(-1)^l(2l+1)e^{\pi \mathbf{i}(l+\frac{1}{2})^2(-\frac{1}{\tau }) }.
\end{align*}
Thus, we can prove this lemma if we show $\left.\frac{\partial }{\partial \beta }\vartheta _{\frac{1}{2},\frac{1}{2}}(\beta ,-\frac{1}{\tau } )\right|_{\beta =0}\not=0$. By the Jacobi's differential formula (see \cite{t}, $\mathrm{p}$.64),
\begin{equation*}
\left.\frac{\partial }{\partial \beta }\vartheta _{\frac{1}{2},\frac{1}{2}}\left(\beta ,-\frac{1}{\tau } \right)\right|_{\beta =0}=-\pi \vartheta _{0,0}\left(0,-\frac{1}{\tau } \right)\vartheta _{0,\frac{1}{2}}\left(0,-\frac{1}{\tau } \right)\vartheta _{\frac{1}{2},0}\left(0,-\frac{1}{\tau } \right)
\end{equation*}
holds. Since the zeros of $\vartheta _{0,0}(\beta ,\tau )$, $\vartheta _{0,\frac{1}{2}}(\beta ,\tau )$ and $\vartheta _{\frac{1}{2},0}(\beta ,\tau )$ are 
\begin{align*}
&\text{The zeros of}\ \vartheta _{0,0}(\beta ,\tau )\ \ \ :\ \ \ \biggl\{\left(\left(p+\frac{1}{2}\right)\tau +\left(q+\frac{1}{2}\right),\tau \right)\in \mathbb{C}\times \mathbb{H}\biggr\},\\
&\text{The zeros of}\ \vartheta _{0,\frac{1}{2}}(\beta ,\tau )\ \ \ :\ \ \ \biggl\{\left(\left(p+\frac{1}{2}\right)\tau +q,\tau \right)\in \mathbb{C}\times \mathbb{H}\biggr\},\\
&\text{The zeros of}\ \vartheta _{\frac{1}{2},0}(\beta ,\tau )\ \ \ :\ \ \ \biggl\{\left(p\tau +\left(q+\frac{1}{2}\right),\tau \right)\in \mathbb{C}\times \mathbb{H}\biggr\},
\end{align*}
where $p$, $q\in \mathbb{Z}$, we see $\vartheta _{0,0}(0,-\frac{1}{\tau })\not=0$, $\vartheta _{0,\frac{1}{2}}(0,-\frac{1}{\tau })\not=0$ and $\vartheta _{\frac{1}{2},0}(0,-\frac{1}{\tau })\not=0$, which implies $\left.\frac{\partial }{\partial \beta }\vartheta _{\frac{1}{2},\frac{1}{2}}(\beta ,-\frac{1}{\tau } )\right|_{\beta =0}\not=0$.
\end{proof}
Now, we consider the composition $\phi \tilde{\phi } : E_{(\frac{1}{2},\frac{\eta }{2})}\rightarrow E_{(\frac{1}{2},\frac{\eta }{2})}$. By Corollary \ref{cor3.2}, the diagonal components of $\phi \tilde{\phi }$ are same, and the off-diagonal components are 0. Thus, we calculate the (1,1) component of $\phi \tilde{\phi } $. Using the identity $(\ref{id})$, it turns out to be
\begin{flalign*}
&\phi _{11}(x,y)\tilde{\phi }_{11}(x,y)+\phi _{12}(x,y)\tilde{\phi }_{21}(x,y)&\\
&=\biggl(e^{\frac{\mathbf{i}}{4\pi }(q-t-\pi )x+\frac{\mathbf{i}}{8\pi \tau }(p-s-\pi )^2}\displaystyle{\sum_{M\in \mathbb{Z}}}(-1)^Me^{-\mathbf{i}(q-t)M-\frac{\mathbf{i}}{8\pi \tau }(x-4\pi M-p+s+\pi )^2}e^{\mathbf{i}My}\biggr)&\\
&\ \ \ \times \biggl(-\frac{\bar{\tau }-\tau }{2\tau }\mathbf{i}e^{-\frac{\mathbf{i}}{4\pi }(q-t-\pi )x-\frac{\mathbf{i}}{8\pi \tau }(p-s-\pi )^2}\displaystyle{\sum_{I\in \mathbb{Z}}}\displaystyle{\sum_{H\in \mathbb{Z}}}(-1)^{I+H}e^{-\mathbf{i}(q-t)I-\frac{\pi \mathbf{i}}{\tau }(H-2I-\frac{1}{2})^2}&\\
&\ \ \ \times \Bigl\{\theta _{\varepsilon }(x-(-2\pi H+p-s))-\vartheta _{\varepsilon }(x-(-4\pi I+p-s-\pi ))\Bigr\}e^{\frac{\mathbf{i}}{8\pi \tau }(x+4\pi I-p+s+\pi )^2}e^{\mathbf{i}Iy}\biggr)&\\
&\ \ \ +\biggl(\frac{\bar{\tau }-\tau }{2\tau }\mathbf{i}e^{-\frac{\mathbf{i}}{4\pi }(q-t+\pi )x-\frac{\mathbf{i}}{8\pi \tau }(p-s+\pi )^2}\displaystyle{\sum_{N\in \mathbb{Z}}}\displaystyle{\sum_{H\in \mathbb{Z}}}(-1)^{N+H}e^{-\mathbf{i}(q-t)N-\frac{\pi \mathbf{i}}{\tau }(H-2N+\frac{1}{2})^2}&\\
&\ \ \ \times \Bigl\{\theta _{\varepsilon }(x-(-2\pi H+p-s))-\vartheta _{\varepsilon }(x-(-4\pi N+p-s+\pi ))\Bigr\}e^{\frac{\mathbf{i}}{8\pi \tau }(x+4\pi N-p+s-\pi )^2}e^{\mathbf{i}Ny}\biggr)&\\
&\ \ \ \times \biggl(e^{\frac{\mathbf{i}}{4\pi }(q-t+\pi )x+\frac{\mathbf{i}}{8\pi \tau }(p-s+\pi )^2}\displaystyle{\sum_{K\in \mathbb{Z}}}(-1)^Ke^{-\mathbf{i}(q-t)K-\frac{\mathbf{i}}{8\pi \tau }(x-4\pi K-p+s-\pi )^2}e^{\mathbf{i}Ky}\biggr)&\\
&=-\frac{\bar{\tau }-\tau }{2\tau }\mathbf{i}\displaystyle{\sum_{\mathbf{a}\in \mathbb{Z}}}\displaystyle{\sum_{I\in \mathbb{Z}}}\displaystyle{\sum_{H\in \mathbb{Z}}}(-1)^{\mathbf{a}+H}e^{-\mathbf{i}(q-t)\mathbf{a}-\frac{\pi \mathbf{i}}{\tau }(H-2I-\frac{1}{2})^2}&\\
&\ \ \ \times \Bigl\{\theta _{\varepsilon }(x-(-2\pi H+p-s))-\theta _{\varepsilon }(x-(-4\pi I+p-s-\pi ))\Bigr\}&\\
&\ \ \ \times e^{\frac{\mathbf{i}}{\tau }(\mathbf{a}x-2\pi \mathbf{a}^2+4\pi I\mathbf{a}+(-p+s+\pi )\mathbf{a})}e^{\mathbf{i}\mathbf{a}y}&\\
&\ \ \ +\frac{\bar{\tau }-\tau }{2\tau }\mathbf{i}\displaystyle{\sum_{\mathbf{a}\in \mathbb{Z}}}\displaystyle{\sum_{N\in \mathbb{Z}}}\displaystyle{\sum_{H\in \mathbb{Z}}}(-1)^{\mathbf{a}+H}e^{-\mathbf{i}(q-t)\mathbf{a}-\frac{\pi \mathbf{i}}{\tau }(H-2N+\frac{1}{2})^2}&\\
&\ \ \ \times \Bigl\{\theta _{\varepsilon }(x-(-2\pi H+p-s))-\theta _{\varepsilon }(x-(-4\pi N+p-s+\pi ))\Bigr\}&\\
&\ \ \ \times e^{\frac{\mathbf{i}}{\tau }(\mathbf{a}x-2\pi \mathbf{a}^2+4\pi N\mathbf{a}+(-p+s-\pi )\mathbf{a})}e^{\mathbf{i}\mathbf{a}y}&\\
&=\frac{\bar{\tau }-\tau }{2\tau }\mathbf{i}\displaystyle{\sum_{\mathbf{a}\in \mathbb{Z}}}\displaystyle{\sum_{k\in \mathbb{Z}}}\displaystyle{\sum_{l\in \mathbb{Z}}}(-1)^{\mathbf{a}+l}e^{-\mathbf{i}(q-t)\mathbf{a}-\frac{\pi \mathbf{i}}{\tau }(l+\frac{1}{2})^2}&\\
&\ \ \ \times \Bigl\{\theta _{\varepsilon }(x-(-2\pi k-2\pi l+p-s))-\theta _{\varepsilon }(x-(-2\pi k+p-s+\pi ))\Bigr\}&\\
&\ \ \ \times e^{\frac{\mathbf{i}}{\tau }(\mathbf{a}x-2\pi \mathbf{a}^2+2\pi k\mathbf{a}+(-p+s-\pi )\mathbf{a})}e^{\mathbf{i}\mathbf{a}y}&\\
&=\frac{\bar{\tau }-\tau }{2\tau }\mathbf{i}\displaystyle{\sum_{k\in \mathbb{Z}}}\displaystyle{\sum_{l\in \mathbb{Z}}}(-1)^le^{-\frac{\pi \mathbf{i}}{\tau }(l+\frac{1}{2})^2}&\\
&\ \ \ \times \Bigl\{\theta _{\varepsilon }(x-(-2\pi k-2\pi l+p-s))-\theta _{\varepsilon }(x-(-2\pi k+p-s+\pi ))\Bigr\}&\\
&=\frac{\bar{\tau }-\tau }{4\tau }\mathbf{i}\displaystyle{\sum_{l\in \mathbb{Z}}}(-1)^l(2l+1)e^{-\frac{\pi \mathbf{i}}{\tau }(l+\frac{1}{2})^2}.
\end{flalign*}
This value is not zero by Lemma \ref{lem4.8}. We denote this value by $c_{\tau }$. Thus, we obtain the following theorem.
\begin{theo}\label{the4.9}
The mapping cone $C(\psi )$ is isomorphic to $E_{(\frac{1}{2},\frac{\eta }{2})}$ with $\eta \equiv \mu +\nu +\pi +\pi \tau \ (\mathrm{mod}\ 2\pi (\mathbb{Z}\oplus \tau \mathbb{Z}))$, where $\frac{1}{\sqrt{c_{\tau }}}\phi $ and $\frac{1}{\sqrt{c_{\tau }}}\tilde{\phi } $ give the isomorphism. 
\end{theo}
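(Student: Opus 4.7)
The plan is to combine the results already in hand: Theorems \ref{the4.3} and \ref{the4.5} identify the unique class of $\eta$ for which both $F(C(\psi))$ and $G(C(\psi))$ are one-dimensional, so non-trivial morphisms $\tilde\phi : E_{(\frac{1}{2},\frac{\eta}{2})} \to C(\psi)$ and $\phi : C(\psi) \to E_{(\frac{1}{2},\frac{\eta}{2})}$ exist exactly when $\eta \equiv \mu+\nu+\pi+\pi\tau \pmod{2\pi(\mathbb{Z}\oplus\tau\mathbb{Z})}$, and Corollaries \ref{cor4.4} and \ref{cor4.6} pin down explicit local representatives. The isomorphism assertion therefore reduces to checking that the compositions $\phi\tilde\phi$ and $\tilde\phi\phi$ are non-zero scalar multiples of the identity; rescaling by $1/\sqrt{c_\tau}$ then produces mutually inverse isomorphisms.

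First I would focus on $\phi\tilde\phi \in H^0(E_{(\frac{1}{2},\frac{\eta}{2})},E_{(\frac{1}{2},\frac{\eta}{2})})$. By Corollary \ref{cor3.2} every holomorphic endomorphism of $E_{(\frac{1}{2},\frac{\eta}{2})}$ is a scalar multiple of $I_2$, so it suffices to evaluate a single diagonal entry, say $(\phi\tilde\phi)_{11} = \phi_{11}\tilde\phi_{11} + \phi_{12}\tilde\phi_{21}$, locally. Plugging in the explicit Fourier–theta expressions from Corollaries \ref{cor4.4} and \ref{cor4.6}, expanding the products, and relabelling summation indices (collecting coefficients of $e^{\mathbf{i}\mathbf{a}y}$ with $\mathbf{a} = I+M = N+K$), the quadratic exponents in $x$ from $\phi$ and $\tilde\phi$ cancel in each term, leaving a sum over $(\mathbf{a},k,l) \in \mathbb{Z}^3$ of exponentials times the bump-function differences $\theta_\varepsilon(x-(-2\pi k-2\pi l+p-s)) - \theta_\varepsilon(x-(-2\pi k+p-s+\pi))$.

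The key simplification is to apply the identity (\ref{id}) established in the preceding lemma: it forces every term with $\mathbf{a}\neq 0$ in the triple sum to vanish, so the $(1,1)$ component reduces to the $\mathbf{a}=0$ contribution, which is $x$- and $y$-independent. After rearranging the remaining double sum over $(k,l)$ and noticing the telescoping pattern of the two $\theta_\varepsilon$ terms across shifts in $k$, the sum over $k$ collapses to a single counting factor $(2l+1)$ in front of $e^{-\frac{\pi\mathbf{i}}{\tau}(l+\frac{1}{2})^2}$. The result is the constant
\begin{equation*}
c_\tau = \frac{\bar\tau-\tau}{4\tau}\mathbf{i}\sum_{l\in\mathbb{Z}}(-1)^l(2l+1)e^{-\frac{\pi\mathbf{i}}{\tau}(l+\frac{1}{2})^2},
\end{equation*}
which is non-zero by Lemma \ref{lem4.8} (which rewrites the series as $\partial_\beta \vartheta_{\frac{1}{2},\frac{1}{2}}(\beta,-\frac{1}{\tau})|_{\beta=0}$ and invokes the Jacobi derivative formula together with the non-vanishing of the three even theta constants at the origin).

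Finally, the reverse composition $\tilde\phi\phi \in H^0(C(\psi),C(\psi))$ is handled analogously: although $C(\psi)$ is not known a priori to have scalar endomorphisms, once the map $\frac{1}{\sqrt{c_\tau}}\phi$ is shown to be a left inverse of $\frac{1}{\sqrt{c_\tau}}\tilde\phi$, the pair of non-trivial morphisms between two rank-$2$ holomorphic vector bundles of the same degree automatically gives mutual isomorphisms, so $\tilde\phi\phi = c_\tau\cdot\mathrm{id}_{C(\psi)}$ follows. The main obstacle is the bookkeeping in the triple sum for $(\phi\tilde\phi)_{11}$: matching the index shifts $H\leftrightarrow 2I+\tfrac{1}{2}\pm\tfrac{1}{2}$ in the two halves so that the opposite-sign theta contributions combine into the single identity (\ref{id}), and then verifying that the telescoped sum has the claimed coefficient $(2l+1)$ — after that, Lemma \ref{lem4.8} closes the argument and Theorem \ref{the4.9} follows.
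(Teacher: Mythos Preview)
Your proposal is correct and follows essentially the same route as the paper: reduce to the $(1,1)$ entry of $\phi\tilde\phi$ via Corollary~\ref{cor3.2}, expand using the explicit bases of Corollaries~\ref{cor4.4} and~\ref{cor4.6}, kill the $\mathbf{a}\neq 0$ Fourier modes with the identity~(\ref{id}), and identify the surviving constant as $c_\tau$, whose non-vanishing is Lemma~\ref{lem4.8}. The only addition is your explicit remark that $\tilde\phi\phi = c_\tau\cdot\mathrm{id}_{C(\psi)}$ follows formally once $\phi\tilde\phi = c_\tau I_2$ is known (since $\tilde\phi$ is then fibrewise injective between bundles of equal rank, hence invertible); the paper leaves this implicit.
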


\section{Geometric interpretation}
In this section, we discuss a geometric interpretation of the mapping cone $C(\psi )$ from the viewpoint of the corresponding symplectic geometry. First we recall the Fukaya category \cite{Fuk} Fuk($M$) following \cite{P}.

Let $(M,\check{M})$ be a mirror pair. The objects of Fuk($M$) are special Lagrangian submanifolds of $M$ endowed with flat local systems. We denote by $\mathcal{U}_i=(\mathcal{L}_i,\mathcal{E}_i,\nabla_{\mathcal{E}_i})$ an object of Fuk($M$) where $\mathcal{L}_i$ denotes a special Lagrangian submanifold and $(\mathcal{E}_i,\nabla_{\mathcal{E}_i})$ denotes a local system. The space of morphisms $\mathcal{C}(\mathcal{U}_i,\mathcal{U}_j)$ are defined as 
\begin{equation*}
\mathcal{C}(\mathcal{U}_i,\mathcal{U}_j):=\mathbb{C}^{\sharp \{\mathcal{L}_i\cap \mathcal{L}_j\}}\otimes \mathrm{Hom}(\mathcal{E}_i,\mathcal{E}_j),
\end{equation*}
where the $\mathrm{Hom}$ represents homomorphisms of vector spaces at the points of intersection. There is a $\mathbb{Z}$-grading on the morphisms by considering the Maslov index at the points of intersection. The degree $r$ part is denoted $\mathcal{C}^r(\mathcal{U}_i,\mathcal{U}_j)$. The $A_{\infty }$ structure $\{m_k\}$ in Fuk($M$) is given by summing over holomorphic maps from the disk $D^2$, which take the components of the boundary $S^1=\partial D^2$ to the special Lagrangian objects. An element $u_j$ of $\mathcal{C}(\mathcal{U}_j,\mathcal{U}_{j+1})$ is represented by a pair $u_j=t_j\cdot a_j$, where $a_j\in \mathcal{L}_j\cap \mathcal{L}_{j+1}$, and $t_j$ is a matrix in $\mathrm{Hom}(\mathcal{E}_j|_{a_j},\mathcal{E}_{j+1}|_{a_j})$. Here, we assume that $\mathcal{L}_j$ and $\mathcal{L}_{j+1}$ are transversal to each other ($j=1,2,\cdots,k$), and $\mathcal{L}_1$ and $\mathcal{L}_{k+1}$ are also transversal to each other. The composition map $m_k : \mathcal{C}(\mathcal{U}_1,\mathcal{U}_2)\otimes \cdots \otimes \mathcal{C}(\mathcal{U}_k,\mathcal{U}_{k+1})\rightarrow \mathcal{C}(\mathcal{U}_1,\mathcal{U}_{k+1})$ is defined by
\begin{equation*}
m_k(u_1\otimes \cdots \otimes u_k):=\displaystyle{\sum_{a_{k+1}\in \mathcal{L}_1\cap \mathcal{L}_{k+1}}}c(u_1, \cdots ,u_k, a_{k+1})\cdot a_{k+1},
\end{equation*}
where (notation explained below)
\begin{equation*}
c(u_1, \cdots ,u_k, a_{k+1})=\displaystyle{\sum_{\phi }}\pm e^{2\pi \mathbf{i}\int\phi ^*\omega }\cdot Pe^{\oint\phi ^*\beta }
\end{equation*}
is a matrix in $\mathrm{Hom}(\mathcal{E}_1|_{a_{k+1}},\mathcal{E}_{k+1}|_{a_{k+1}})$. Here we sum over pseudo holomorphic maps $\phi : D^2\rightarrow M$, up to equivalence, with the following conditions along the boundary : there are $k+1$ points $p_j=e^{2\pi \mathbf{i}\alpha _j}$ such that $\phi (p_j)=a_j$ and $\phi (e^{2\pi \mathbf{i}\alpha })\in \mathcal{L}_j$ for $\alpha \in (\alpha _{j-1},\alpha _j)$ (see also Figure 2). In the above, $\omega $ is the complexified K\"{a}hlar form, and $P$ represents a path-ordered integration, where $\beta $ is the connection of the flat bundle along the local system on the boundary (we omit the expression of the sign). The path-ordered integration is defined by
\begin{equation*}
Pe^{\oint\phi ^*\beta }:=Pe^{\int_{\alpha _k}^{\alpha _{k+1}}\phi ^*\beta _{k+1}d\alpha }\cdot t_k\cdot Pe^{\int_{\alpha _{k-1}}^{\alpha _k}\phi ^*\beta _kd\alpha }\cdot t_{k-1}\cdot \cdots \cdot t_1\cdot Pe^{\int_{\alpha _{k+1}}^{\alpha _1}\phi ^*\beta _1d\alpha }.
\end{equation*}
\begin{figure}
\begin{center}
{\unitlength 0.1in%
\begin{picture}(46.3500,23.4000)(1.9500,-33.4500)%
%
\special{pn 8}%
\special{ar 1260 2410 656 656 0.0000000 6.2831853}%
%
\special{pn 8}%
\special{pa 3190 1930}%
\special{pa 3220 1918}%
\special{pa 3249 1907}%
\special{pa 3279 1895}%
\special{pa 3308 1883}%
\special{pa 3338 1870}%
\special{pa 3396 1842}%
\special{pa 3425 1827}%
\special{pa 3453 1811}%
\special{pa 3509 1777}%
\special{pa 3563 1739}%
\special{pa 3589 1719}%
\special{pa 3615 1698}%
\special{pa 3640 1676}%
\special{pa 3664 1654}%
\special{pa 3688 1631}%
\special{pa 3710 1607}%
\special{pa 3733 1583}%
\special{pa 3754 1558}%
\special{pa 3774 1532}%
\special{pa 3793 1506}%
\special{pa 3812 1479}%
\special{pa 3829 1452}%
\special{pa 3845 1424}%
\special{pa 3860 1395}%
\special{pa 3874 1367}%
\special{pa 3887 1337}%
\special{pa 3899 1307}%
\special{pa 3909 1277}%
\special{pa 3918 1247}%
\special{pa 3936 1185}%
\special{pa 3940 1170}%
\special{fp}%
%
\special{pn 8}%
\special{pa 3450 1600}%
\special{pa 3449 1632}%
\special{pa 3449 1664}%
\special{pa 3447 1728}%
\special{pa 3445 1760}%
\special{pa 3444 1792}%
\special{pa 3441 1823}%
\special{pa 3439 1855}%
\special{pa 3436 1887}%
\special{pa 3428 1951}%
\special{pa 3423 1983}%
\special{pa 3411 2047}%
\special{pa 3403 2078}%
\special{pa 3395 2110}%
\special{pa 3386 2141}%
\special{pa 3376 2172}%
\special{pa 3365 2203}%
\special{pa 3353 2233}%
\special{pa 3340 2263}%
\special{pa 3326 2292}%
\special{pa 3311 2321}%
\special{pa 3295 2349}%
\special{pa 3278 2376}%
\special{pa 3259 2402}%
\special{pa 3240 2427}%
\special{pa 3220 2452}%
\special{pa 3199 2475}%
\special{pa 3176 2498}%
\special{pa 3153 2520}%
\special{pa 3105 2562}%
\special{pa 3079 2582}%
\special{pa 3054 2602}%
\special{pa 3028 2621}%
\special{pa 3002 2641}%
\special{pa 2990 2650}%
\special{fp}%
%
\special{pn 8}%
\special{pa 3030 2410}%
\special{pa 3059 2423}%
\special{pa 3088 2437}%
\special{pa 3118 2450}%
\special{pa 3146 2464}%
\special{pa 3175 2479}%
\special{pa 3203 2494}%
\special{pa 3257 2528}%
\special{pa 3309 2566}%
\special{pa 3334 2587}%
\special{pa 3358 2609}%
\special{pa 3381 2632}%
\special{pa 3403 2656}%
\special{pa 3445 2706}%
\special{pa 3464 2733}%
\special{pa 3482 2760}%
\special{pa 3499 2787}%
\special{pa 3515 2816}%
\special{pa 3530 2844}%
\special{pa 3543 2874}%
\special{pa 3555 2903}%
\special{pa 3566 2933}%
\special{pa 3576 2964}%
\special{pa 3585 2995}%
\special{pa 3593 3026}%
\special{pa 3600 3057}%
\special{pa 3606 3088}%
\special{pa 3612 3120}%
\special{pa 3618 3151}%
\special{pa 3628 3215}%
\special{pa 3630 3230}%
\special{fp}%
%
\special{pn 8}%
\special{pa 3440 3040}%
\special{pa 3488 2998}%
\special{pa 3513 2977}%
\special{pa 3537 2957}%
\special{pa 3587 2917}%
\special{pa 3613 2897}%
\special{pa 3638 2878}%
\special{pa 3665 2860}%
\special{pa 3691 2842}%
\special{pa 3718 2825}%
\special{pa 3774 2793}%
\special{pa 3802 2778}%
\special{pa 3831 2763}%
\special{pa 3860 2750}%
\special{pa 3890 2737}%
\special{pa 3920 2725}%
\special{pa 3980 2703}%
\special{pa 4042 2685}%
\special{pa 4073 2677}%
\special{pa 4105 2670}%
\special{pa 4136 2663}%
\special{pa 4200 2653}%
\special{pa 4296 2644}%
\special{pa 4329 2643}%
\special{pa 4393 2643}%
\special{pa 4425 2644}%
\special{pa 4457 2646}%
\special{pa 4521 2652}%
\special{pa 4553 2657}%
\special{pa 4584 2662}%
\special{pa 4616 2667}%
\special{pa 4647 2673}%
\special{pa 4679 2680}%
\special{pa 4710 2686}%
\special{pa 4730 2690}%
\special{fp}%
%
\special{pn 8}%
\special{pa 4520 2880}%
\special{pa 4435 2745}%
\special{pa 4419 2717}%
\special{pa 4403 2690}%
\special{pa 4387 2662}%
\special{pa 4357 2604}%
\special{pa 4329 2546}%
\special{pa 4316 2516}%
\special{pa 4294 2456}%
\special{pa 4284 2425}%
\special{pa 4276 2394}%
\special{pa 4269 2363}%
\special{pa 4264 2331}%
\special{pa 4260 2299}%
\special{pa 4258 2267}%
\special{pa 4258 2235}%
\special{pa 4259 2203}%
\special{pa 4262 2171}%
\special{pa 4266 2138}%
\special{pa 4272 2106}%
\special{pa 4280 2075}%
\special{pa 4288 2043}%
\special{pa 4298 2012}%
\special{pa 4310 1981}%
\special{pa 4336 1921}%
\special{pa 4351 1892}%
\special{pa 4367 1864}%
\special{pa 4403 1810}%
\special{pa 4422 1785}%
\special{pa 4441 1759}%
\special{pa 4504 1687}%
\special{pa 4526 1664}%
\special{pa 4549 1641}%
\special{pa 4571 1618}%
\special{pa 4594 1595}%
\special{pa 4617 1573}%
\special{pa 4620 1570}%
\special{fp}%
%
\special{pn 8}%
\special{pa 3660 1250}%
\special{pa 3673 1279}%
\special{pa 3701 1337}%
\special{pa 3716 1365}%
\special{pa 3732 1393}%
\special{pa 3766 1447}%
\special{pa 3785 1474}%
\special{pa 3805 1499}%
\special{pa 3826 1524}%
\special{pa 3848 1548}%
\special{pa 3870 1571}%
\special{pa 3894 1594}%
\special{pa 3918 1616}%
\special{pa 3943 1636}%
\special{pa 3969 1656}%
\special{pa 3996 1675}%
\special{pa 4023 1692}%
\special{pa 4051 1709}%
\special{pa 4079 1725}%
\special{pa 4109 1739}%
\special{pa 4138 1752}%
\special{pa 4168 1764}%
\special{pa 4199 1775}%
\special{pa 4230 1785}%
\special{pa 4292 1801}%
\special{pa 4324 1807}%
\special{pa 4388 1817}%
\special{pa 4421 1820}%
\special{pa 4453 1822}%
\special{pa 4485 1823}%
\special{pa 4518 1823}%
\special{pa 4550 1822}%
\special{pa 4582 1820}%
\special{pa 4614 1817}%
\special{pa 4646 1813}%
\special{pa 4678 1808}%
\special{pa 4709 1803}%
\special{pa 4741 1798}%
\special{pa 4772 1792}%
\special{pa 4803 1785}%
\special{pa 4830 1780}%
\special{fp}%
%
\special{pn 4}%
\special{sh 1}%
\special{ar 1260 1750 16 16 0 6.2831853}%
\special{sh 1}%
\special{ar 1260 1750 16 16 0 6.2831853}%
%
\special{pn 4}%
\special{sh 1}%
\special{ar 640 2170 16 16 0 6.2831853}%
\special{sh 1}%
\special{ar 640 2170 16 16 0 6.2831853}%
%
\special{pn 4}%
\special{sh 1}%
\special{ar 1870 2170 16 16 0 6.2831853}%
\special{sh 1}%
\special{ar 1870 2170 16 16 0 6.2831853}%
%
\special{pn 4}%
\special{sh 1}%
\special{ar 640 2670 16 16 0 6.2831853}%
\special{sh 1}%
\special{ar 640 2670 16 16 0 6.2831853}%
%
\special{pn 4}%
\special{sh 1}%
\special{ar 1880 2670 16 16 0 6.2831853}%
\special{sh 1}%
\special{ar 1880 2670 16 16 0 6.2831853}%
%
\special{pn 4}%
\special{sh 1}%
\special{ar 1250 3080 16 16 0 6.2831853}%
\special{sh 1}%
\special{ar 1250 3080 16 16 0 6.2831853}%
%
\special{pn 4}%
\special{sh 1}%
\special{ar 3440 1810 16 16 0 6.2831853}%
\special{sh 1}%
\special{ar 3440 1810 16 16 0 6.2831853}%
%
\special{pn 4}%
\special{sh 1}%
\special{ar 3800 1490 16 16 0 6.2831853}%
\special{sh 1}%
\special{ar 3800 1490 16 16 0 6.2831853}%
%
\special{pn 8}%
\special{pa 2290 3270}%
\special{pa 2900 3270}%
\special{fp}%
\special{sh 1}%
\special{pa 2900 3270}%
\special{pa 2833 3250}%
\special{pa 2847 3270}%
\special{pa 2833 3290}%
\special{pa 2900 3270}%
\special{fp}%
%
\special{pn 13}%
\special{pa 650 2160}%
\special{pa 663 2131}%
\special{pa 677 2101}%
\special{pa 691 2073}%
\special{pa 707 2045}%
\special{pa 724 2019}%
\special{pa 744 1993}%
\special{pa 765 1969}%
\special{pa 788 1946}%
\special{pa 812 1924}%
\special{pa 837 1904}%
\special{pa 863 1885}%
\special{pa 891 1867}%
\special{pa 919 1851}%
\special{pa 947 1836}%
\special{pa 977 1822}%
\special{pa 1006 1810}%
\special{pa 1036 1799}%
\special{pa 1067 1789}%
\special{pa 1098 1780}%
\special{pa 1129 1772}%
\special{pa 1160 1766}%
\special{pa 1192 1760}%
\special{pa 1223 1755}%
\special{pa 1255 1751}%
\special{pa 1260 1750}%
\special{fp}%
%
\special{pn 13}%
\special{pa 3450 1820}%
\special{pa 3476 1802}%
\special{pa 3503 1783}%
\special{pa 3529 1765}%
\special{pa 3555 1746}%
\special{pa 3580 1727}%
\special{pa 3606 1707}%
\special{pa 3654 1665}%
\special{pa 3677 1644}%
\special{pa 3700 1621}%
\special{pa 3722 1598}%
\special{pa 3744 1574}%
\special{pa 3800 1510}%
\special{fp}%
%
\special{pn 8}%
\special{pa 1320 1710}%
\special{pa 1352 1707}%
\special{pa 1416 1699}%
\special{pa 1448 1696}%
\special{pa 1479 1691}%
\special{pa 1511 1687}%
\special{pa 1543 1682}%
\special{pa 1574 1677}%
\special{pa 1606 1671}%
\special{pa 1637 1665}%
\special{pa 1699 1651}%
\special{pa 1730 1642}%
\special{pa 1760 1634}%
\special{pa 1791 1624}%
\special{pa 1821 1615}%
\special{pa 1852 1604}%
\special{pa 1882 1594}%
\special{pa 1942 1572}%
\special{pa 1972 1560}%
\special{pa 2002 1549}%
\special{pa 2032 1537}%
\special{pa 2062 1526}%
\special{pa 2152 1490}%
\special{pa 2302 1435}%
\special{pa 2332 1425}%
\special{pa 2363 1415}%
\special{pa 2393 1406}%
\special{pa 2424 1396}%
\special{pa 2454 1387}%
\special{pa 2578 1355}%
\special{pa 2609 1348}%
\special{pa 2641 1341}%
\special{pa 2672 1334}%
\special{pa 2736 1322}%
\special{pa 2832 1307}%
\special{pa 2864 1303}%
\special{pa 2897 1299}%
\special{pa 2929 1296}%
\special{pa 2962 1294}%
\special{pa 2995 1291}%
\special{pa 3027 1290}%
\special{pa 3060 1289}%
\special{pa 3125 1289}%
\special{pa 3221 1295}%
\special{pa 3285 1303}%
\special{pa 3347 1315}%
\special{pa 3378 1322}%
\special{pa 3409 1331}%
\special{pa 3439 1340}%
\special{pa 3470 1350}%
\special{pa 3499 1361}%
\special{pa 3529 1373}%
\special{pa 3587 1399}%
\special{pa 3645 1427}%
\special{pa 3674 1442}%
\special{pa 3703 1456}%
\special{pa 3710 1460}%
\special{fp}%
%
\special{pn 8}%
\special{pa 700 2230}%
\special{pa 732 2227}%
\special{pa 764 2223}%
\special{pa 924 2208}%
\special{pa 955 2206}%
\special{pa 1019 2202}%
\special{pa 1083 2200}%
\special{pa 1178 2200}%
\special{pa 1242 2202}%
\special{pa 1274 2204}%
\special{pa 1305 2206}%
\special{pa 1337 2208}%
\special{pa 1369 2211}%
\special{pa 1401 2213}%
\special{pa 1432 2216}%
\special{pa 1464 2219}%
\special{pa 1496 2223}%
\special{pa 1560 2229}%
\special{pa 1624 2237}%
\special{pa 1656 2240}%
\special{pa 1688 2244}%
\special{pa 1752 2250}%
\special{pa 1784 2254}%
\special{pa 1848 2260}%
\special{pa 1881 2263}%
\special{pa 1913 2265}%
\special{pa 1946 2268}%
\special{pa 1978 2270}%
\special{pa 2011 2272}%
\special{pa 2043 2273}%
\special{pa 2076 2275}%
\special{pa 2108 2276}%
\special{pa 2141 2276}%
\special{pa 2173 2277}%
\special{pa 2206 2277}%
\special{pa 2238 2276}%
\special{pa 2270 2276}%
\special{pa 2303 2274}%
\special{pa 2335 2273}%
\special{pa 2367 2271}%
\special{pa 2399 2268}%
\special{pa 2431 2266}%
\special{pa 2527 2254}%
\special{pa 2589 2244}%
\special{pa 2621 2238}%
\special{pa 2683 2224}%
\special{pa 2713 2216}%
\special{pa 2744 2208}%
\special{pa 2774 2199}%
\special{pa 2834 2179}%
\special{pa 2864 2168}%
\special{pa 2894 2156}%
\special{pa 2952 2132}%
\special{pa 2982 2119}%
\special{pa 3011 2106}%
\special{pa 3039 2092}%
\special{pa 3068 2078}%
\special{pa 3097 2063}%
\special{pa 3125 2049}%
\special{pa 3154 2033}%
\special{pa 3210 2003}%
\special{pa 3239 1987}%
\special{pa 3323 1939}%
\special{pa 3351 1922}%
\special{pa 3379 1906}%
\special{pa 3390 1900}%
\special{fp}%
%
\special{pn 8}%
\special{pa 3710 1460}%
\special{pa 3670 1400}%
\special{fp}%
%
\special{pn 8}%
\special{pa 3720 1460}%
\special{pa 3650 1490}%
\special{fp}%
%
\special{pn 8}%
\special{pa 3390 1900}%
\special{pa 3300 1920}%
\special{fp}%
%
\special{pn 8}%
\special{pa 3400 1900}%
\special{pa 3320 1990}%
\special{fp}%
\put(12.6000,-32.7000){\makebox(0,0){$D^2$}}%
\put(26.1000,-34.1000){\makebox(0,0){$\phi$}}%
\put(4.3000,-21.8000){\makebox(0,0){$p_{j-1}$}}%
\put(12.0000,-15.9000){\makebox(0,0){$p_j$}}%
\put(32.3000,-17.3000){\makebox(0,0){$a_{j-1}$}}%
\put(40.0000,-14.9000){\makebox(0,0){$a_j$}}%
\put(40.4000,-32.7000){\makebox(0,0){$M$}}%
\put(40.0000,-10.7000){\makebox(0,0){$\mathcal{L}_j$}}%
\put(50.8000,-17.1000){\makebox(0,0){$\mathcal{L}_{j+1}$}}%
\put(28.3000,-27.4000){\makebox(0,0){$\mathcal{L}_{j-1}$}}%
%
\special{pn 4}%
\special{sh 1}%
\special{ar 4400 1820 16 16 0 6.2831853}%
\special{sh 1}%
\special{ar 4400 1820 16 16 0 6.2831853}%
%
\special{pn 4}%
\special{sh 1}%
\special{ar 4380 2640 16 16 0 6.2831853}%
\special{sh 1}%
\special{ar 4380 2640 16 16 0 6.2831853}%
%
\special{pn 4}%
\special{sh 1}%
\special{ar 3560 2950 16 16 0 6.2831853}%
\special{sh 1}%
\special{ar 3560 2950 16 16 0 6.2831853}%
%
\special{pn 4}%
\special{sh 1}%
\special{ar 3190 2490 16 16 0 6.2831853}%
\special{sh 1}%
\special{ar 3190 2490 16 16 0 6.2831853}%
\end{picture}}%
\caption{A rough picture of a pseudo holomorphic map $\phi : D^2\rightarrow M$}
\end{center}
\end{figure}
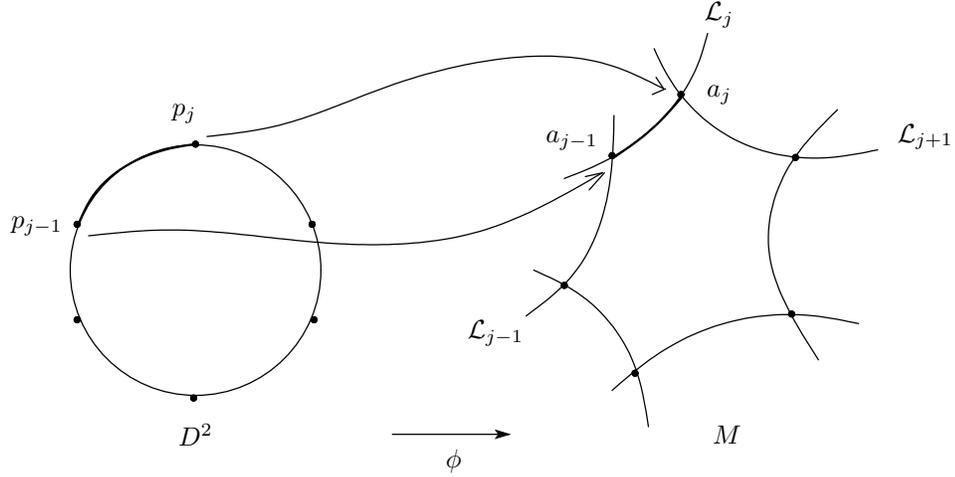

Now we turn to the case $M=(T^2,\omega )$ and consider the Fukaya category Fuk($T^2,\omega $). The Lagrangian submanifolds defined in section 2 are actually special Lagrangian submanifolds of $(T^2,\omega )$.

As we saw Lemma \ref{lem4.1}, the isomorphism $\tilde{\phi } : E_{(\frac{1}{2},\frac{\eta }{2})}\rightarrow C(\psi )$ exists if and only if $F(\psi )=0$, where $F(\psi )$ is the composition of the morphism $E_{(\frac{1}{2},\frac{\eta }{2})}\rightarrow E_{(1,\nu )}$ and $\psi : E_{(1,\nu )}\rightarrow TE_{(0,\mu )}$. So, let us first consider the corresponding product $m_2 : \mathcal{C}^0(s_{(\frac{1}{2},\frac{\eta }{2})},s_{(1,\nu )})\otimes \mathcal{C}^1(s_{(1,\nu )},s_{(0,\mu )})\rightarrow \mathcal{C}^1(s_{(\frac{1}{2},\frac{\eta }{2})},s_{(0,\mu )})$, where 
\begin{align*}
&s_{(\frac{1}{2},\frac{\eta }{2})}=(\mathcal{L}_{(\frac{1}{2},\frac{u}{2})},\mathscr{L}_{(\frac{1}{2},\frac{\eta }{2})},\nabla_{\mathscr{L}_{(\frac{1}{2},\frac{\eta }{2})}}), \ s_{(1,\nu )}=(\mathcal{L}_{(1,s)},\mathscr{L}_{(1,\nu )},\nabla_{\mathscr{L}_{(1,\nu )}}), \\
&s_{(0,\mu )}=(\mathcal{L}_{(0,p)},\mathscr{L}_{(0,\mu )},\nabla_{\mathscr{L}_{(0,\mu )}}),
\end{align*}
and give a geometric interpretation of it. Here, note that the elements of $\mathcal{L}_{(\frac{1}{2},\frac{u}{2})}\cap \mathcal{L}_{(1,s)}$, $\mathcal{L}_{(1,s)}\cap \mathcal{L}_{(0,p)}$ and $\mathcal{L}_{(\frac{1}{2},\frac{u}{2})}\cap \mathcal{L}_{(0,p)}$ are only one point, respectively. We consider the following setting in the fundamental domain [0,2$\pi $]$\times$[0,2$\pi $] $\subset\mathbb{R}^2$ of $T^2$. Let $e_1$ be the intersection point of $\mathcal{L}_{(\frac{1}{2},\frac{u}{2})}$ and $\mathcal{L}_{(1,s)}$. Similarly, let $e_2$, $e_3$ be the intersection points of $\mathcal{L}_{(1,s)}$ and $\mathcal{L}_{(0,p)}$, $\mathcal{L}_{(\frac{1}{2},\frac{u}{2})}$ and $\mathcal{L}_{(0,p)}$, respectively. We regard $e_1$, $e_2$ and $e_3$ as elements of $\mathcal{C}^0(s_{(\frac{1}{2},\frac{\eta }{2})},s_{(1,\nu )})$, $\mathcal{C}^1(s_{(1,\nu )},s_{(0,\mu )})$ and $\mathcal{C}^1(s_{(\frac{1}{2},\frac{\eta }{2})},s_{(0,\mu )})$, respectively. Then, the product $m_2 : \mathcal{C}^0(s_{(\frac{1}{2},\frac{\eta }{2})},s_{(1,\nu )})\otimes \mathcal{C}^1(s_{(1,\nu )},s_{(0,\mu )})\rightarrow \mathcal{C}^1(s_{(\frac{1}{2},\frac{\eta }{2})},s_{(0,\mu )})$ turns out to be
\begin{align}
&m_2(e_1\otimes e_2) \notag \\
&=c(e_1,e_2,e_3)e_3 \notag \\ 
&=\pm \displaystyle{\sum_{n\in \mathbb{Z}}}e^{\frac{\mathbf{i}}{2\pi }\cdot (-\frac{1}{\tau })\cdot \frac{\pi ^2}{2}(2n+1)^2+\frac{\mathbf{i}}{2}(q+t+\pi )(2n+1)-\frac{\mathbf{i}}{2}t(2n+1)-\frac{\mathbf{i}}{2}q(2n+1)}e_3. \label{m2}
\end{align}
In the formula (\ref{m2}), the values $-\frac{1}{\tau }\cdot \frac{\pi ^2}{2}(2n+1)^2$ ($n\in \mathbb{Z}$) are the symplectic areas of the triangles surrounded by the Lagrangian submanifolds $\mathcal{L}_{(0,p)}$, $\mathcal{L}_{(1,s)}$, $\mathcal{L}_{(\frac{1}{2},\frac{u}{2})}$ and their copies in the covering space $\mathbb{R}^2$ of $T^2$ with the complexified symplectic form $\omega =-\frac{1}{\tau }dx\wedge dy$. For example, in Figure 3, where $p=\pi $, $s=0$, $u=0$, $v\equiv q+t+\pi \ (\mathrm{mod}\ 2\pi \mathbb{Z})$, both areas of triangles represented by shaded areas are $\frac{\pi ^2}{2}$, and these values correspond to $\frac{\pi ^2}{2}(2n+1)^2$ with $n=0,-1$. Similarly, both areas of triangles surrounded by the bold lines are $\frac{9}{2}\pi ^2$, and these values correspond to $\frac{\pi ^2}{2}(2n+1)^2$ with $n=1,-2$. The remaining part 
\begin{equation*}
e^{\frac{\mathbf{i}}{2}(q+t+\pi )(2n+1)-\frac{\mathbf{i}}{2}t(2n+1)-\frac{\mathbf{i}}{2}q(2n+1)}\ (n\in \mathbb{Z})
\end{equation*}
in the formula (\ref{m2}) are the results of the calculations of the path-ordered integrations along the boundary of those triangles with the connections $\nabla_{\mathscr{L}_{(0,\mu )}}$, $\nabla_{\mathscr{L}_{(1,\nu )}}$, $\nabla_{\mathscr{L}_{(\frac{1}{2},\frac{\eta }{2})}}$ ($\eta \equiv \mu +\nu +\pi +\pi \tau \ (\mathrm{mod}\ 2\pi (\mathbb{Z}\oplus \tau \mathbb{Z}))$). Here, by comparing the formula (\ref{m2}) to the theta function 
\begin{equation*}
\vartheta _{\frac{1}{2},\frac{1}{2}}(z,\tau )=\sum_{n\in \mathbb{Z}} e^{\pi \mathbf{i}(n+\frac{1}{2})^2\tau +2\pi \mathbf{i}(n+\frac{1}{2})(z+\frac{1}{2})},
\end{equation*}
we see that the structure constant in the formula (\ref{m2}) turns out to be 
\begin{equation*}
\pm \sum_{n\in \mathbb{Z}}e^{\pi \mathbf{i}(n+\frac{1}{2})^2\cdot (-\frac{1}{\tau })+\pi \mathbf{i}(n+\frac{1}{2})}=\pm \vartheta _{\frac{1}{2},\frac{1}{2}} \left( 0,-\frac{1}{\tau } \right) =0.
\end{equation*}
This fact can also be understood in the Fukaya category Fuk($T^2,\omega $) as the cancellation of the signed sum of the exponentials of the symplectic areas of those triangles. Actually, for two symmetric triangles described in Figure 3, different signs are assigned, respectively. Thus, $m_2(e_1\otimes e_2)=0$. 
\begin{figure}
\begin{center}
\begin{center}
{\unitlength 0.1in%
\begin{picture}(28.2000,17.7500)(3.9000,-20.0000)%
%
\special{pn 8}%
\special{pa 400 1200}%
\special{pa 3190 1200}%
\special{fp}%
\special{sh 1}%
\special{pa 3190 1200}%
\special{pa 3123 1180}%
\special{pa 3137 1200}%
\special{pa 3123 1220}%
\special{pa 3190 1200}%
\special{fp}%
%
\special{pn 8}%
\special{pa 1800 2000}%
\special{pa 1800 400}%
\special{fp}%
\special{sh 1}%
\special{pa 1800 400}%
\special{pa 1780 467}%
\special{pa 1800 453}%
\special{pa 1820 467}%
\special{pa 1800 400}%
\special{fp}%
%
\special{pn 8}%
\special{pa 400 1900}%
\special{pa 3200 500}%
\special{fp}%
%
\special{pn 8}%
\special{pa 1000 2000}%
\special{pa 2600 400}%
\special{fp}%
%
\special{pn 8}%
\special{pa 390 1000}%
\special{pa 3200 1000}%
\special{fp}%
%
\special{pn 8}%
\special{pa 400 1400}%
\special{pa 3210 1400}%
\special{fp}%
%
\special{pn 8}%
\special{pa 390 600}%
\special{pa 3200 600}%
\special{fp}%
%
\special{pn 8}%
\special{pa 400 1800}%
\special{pa 3210 1800}%
\special{fp}%
%
\special{pn 4}%
\special{pa 2040 1080}%
\special{pa 1980 1020}%
\special{fp}%
\special{pa 2080 1060}%
\special{pa 2020 1000}%
\special{fp}%
\special{pa 2120 1040}%
\special{pa 2080 1000}%
\special{fp}%
\special{pa 2000 1100}%
\special{pa 1950 1050}%
\special{fp}%
\special{pa 1960 1120}%
\special{pa 1920 1080}%
\special{fp}%
\special{pa 1920 1140}%
\special{pa 1890 1110}%
\special{fp}%
%
\special{pn 4}%
\special{pa 1620 1380}%
\special{pa 1560 1320}%
\special{fp}%
\special{pa 1650 1350}%
\special{pa 1600 1300}%
\special{fp}%
\special{pa 1680 1320}%
\special{pa 1640 1280}%
\special{fp}%
\special{pa 1710 1290}%
\special{pa 1680 1260}%
\special{fp}%
\special{pa 1740 1260}%
\special{pa 1720 1240}%
\special{fp}%
\special{pa 1580 1400}%
\special{pa 1520 1340}%
\special{fp}%
\special{pa 1520 1400}%
\special{pa 1480 1360}%
\special{fp}%
\put(32.7000,-12.0000){\makebox(0,0){$x$}}%
\put(18.0000,-3.0000){\makebox(0,0){$y$}}%
%
\special{pn 20}%
\special{pa 600 1800}%
\special{pa 2990 600}%
\special{fp}%
%
\special{pn 20}%
\special{pa 3000 600}%
\special{pa 2400 600}%
\special{fp}%
%
\special{pn 20}%
\special{pa 2400 600}%
\special{pa 1200 1800}%
\special{fp}%
%
\special{pn 20}%
\special{pa 1200 1800}%
\special{pa 600 1800}%
\special{fp}%
\put(18.9000,-12.9000){\makebox(0,0){$O$}}%
\put(17.2000,-9.3000){\makebox(0,0){$\pi$}}%
\put(16.8000,-5.1000){\makebox(0,0){$3\pi$}}%
\put(16.5000,-15.1000){\makebox(0,0){$-\pi$}}%
\put(16.2000,-18.9000){\makebox(0,0){$-3\pi$}}%
\put(34.4000,-10.0000){\makebox(0,0){$\mathcal{L}_{(0,\pi)}$}}%
\put(26.5000,-2.9000){\makebox(0,0){$\mathcal{L}_{(1,0)}$}}%
\put(33.9000,-4.2000){\makebox(0,0){$\mathcal{L}_{(\frac{1}{2},0)}$}}%
\put(23.4000,-15.6000){\makebox(0,0){$\frac{\pi^2}{2}$}}%
\put(10.0000,-8.0000){\makebox(0,0){$\frac{9}{2}\pi^2$}}%
%
\special{pn 8}%
\special{pa 1010 890}%
\special{pa 1010 1690}%
\special{dt 0.045}%
\special{sh 1}%
\special{pa 1010 1690}%
\special{pa 1030 1623}%
\special{pa 1010 1637}%
\special{pa 990 1623}%
\special{pa 1010 1690}%
\special{fp}%
%
\special{pn 8}%
\special{pa 2200 1510}%
\special{pa 1700 1350}%
\special{dt 0.045}%
\special{sh 1}%
\special{pa 1700 1350}%
\special{pa 1757 1389}%
\special{pa 1751 1366}%
\special{pa 1770 1351}%
\special{pa 1700 1350}%
\special{fp}%
%
\special{pn 8}%
\special{pa 2270 1510}%
\special{pa 2080 1110}%
\special{dt 0.045}%
\special{sh 1}%
\special{pa 2080 1110}%
\special{pa 2091 1179}%
\special{pa 2103 1158}%
\special{pa 2127 1162}%
\special{pa 2080 1110}%
\special{fp}%
%
\special{pn 8}%
\special{pa 1170 800}%
\special{pa 2360 800}%
\special{dt 0.045}%
\special{sh 1}%
\special{pa 2360 800}%
\special{pa 2293 780}%
\special{pa 2307 800}%
\special{pa 2293 820}%
\special{pa 2360 800}%
\special{fp}%
\end{picture}}%
\end{center}
\caption{$p=\pi $, $s=0$, $u=0$, $v\equiv q+t+\pi \ (\mathrm{mod}\ 2\pi \mathbb{Z})$}
\end{center}
\end{figure}
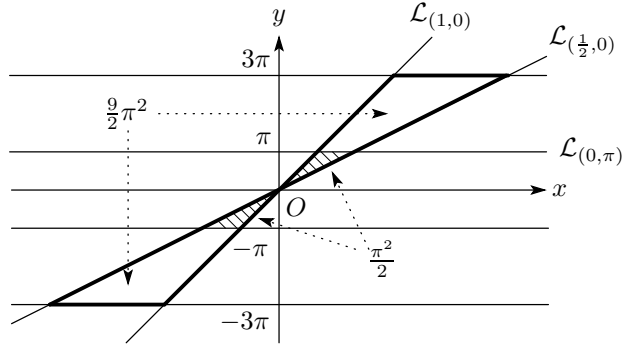

Next, we consider an interpretation for the value $c_{\tau }$. Recall that the holomorphic maps $\tilde{\phi } : E_{(\frac{1}{2},\frac{\eta }{2})}\rightarrow C(\psi )$ and $\phi : C(\psi )\rightarrow E_{(\frac{1}{2},\frac{\eta }{2})}$ satisfy the relation $\phi \tilde{\phi } =c_{\tau }I_2$, where
\begin{align*}
c_{\tau }&=\frac{\bar{\tau } -\tau }{4\pi \tau }\mathbf{i}\displaystyle{\sum_{l\in \mathbb{Z}}}(-1)^l \cdot \pi \left( 2l+1 \right) e^{\frac{\mathbf{i}}{2\pi }\cdot (-\frac{1}{\tau })\cdot \frac{\pi ^2}{2}(2l+1)^2}\\
&=\left.-\frac{\bar{\tau } -\tau }{4\pi \tau }\mathbf{i}\frac{\partial }{\partial \beta }\vartheta _{\frac{1}{2},\frac{1}{2}}\left(\beta ,-\frac{1}{\tau }\right)\right|_{\beta =0}.
\end{align*}
Similarly as in the case of the product $m_2(e_1\otimes e_2)$, the values $-\frac{1}{\tau }\cdot \frac{\pi ^2}{2}(2l+1)^2\ (l\in \mathbb{Z})$ are the symplectic areas of the triangles surrounded by the Lagrangian submanifolds $\mathcal{L}_{(0,p)}$, $\mathcal{L}_{(1,s)}$, $\mathcal{L}_{(\frac{1}{2},\frac{u}{2})}$ and their copies in the covering space $\mathbb{R}^2$ of $T^2$ with the complexified symplectic form $\omega =-\frac{1}{\tau }dx\wedge dy$. We explain the values $\pi (2l+1)\ (l\in \mathbb{Z})$ by using Figure 3. In Figure 3, both length of edges which are parallel to $x$-axis in the triangles represented by shaded areas are $\pi $, and these values correspond to $|\pi (2l+1)|$ with $l=0,-1$. Similarly, both length of edges which are parallel to $x$-axis in the triangles surrounded by the bold lines are $3\pi $, and these values correspond to $|\pi (2l+1)|$ with $l=1,-2$. Thus, the values $\pi (2l+1)$ ($l\in \mathbb{Z}$) give the information about the length of edges which correspond to self intersecting Lagrangian submanifolds. In fact, the value 
\begin{equation*}
\displaystyle{\sum_{l\in \mathbb{Z}}}(-1)^l \cdot \pi \left( 2l+1 \right) e^{\frac{\mathbf{i}}{2\pi }\cdot (-\frac{1}{\tau })\cdot \frac{\pi ^2}{2}(2l+1)^2}=\left.-\frac{\partial }{\partial \beta }\vartheta _{\frac{1}{2},\frac{1}{2}}\left(\beta ,-\frac{1}{\tau }\right)\right|_{\beta =0}
\end{equation*}
in $c_{\tau }$ is the structure constant of the non-transversal $A_{\infty }$ product $m_3$ of $e_1$, $e_2$, $e_3$ in \cite{Kaj} (eq.(19) in the case of $n=2$ and $b=1$). 

\section{The $SL(2;\mathbb{Z})$ action}
In section 4, we mentioned that the exact triangle 
\begin{equation*}
\begin{CD}
\cdots E_{(\frac{a}{n},\frac{\mu }{n})} @>>> C(\psi ') @>>> E_{(\frac{b}{m},\frac{\nu }{m})} @>\psi '\not=0 >> TE_{(\frac{a}{n},\frac{\mu }{n})} \cdots ,
\end{CD}
\end{equation*}
where $\frac{a}{n}<\frac{b}{m}$, i.e., $bn-am>0$ and dim$\mathrm{Ext}^1(E_{(\frac{b}{m},\frac{\nu }{m})},E_{(\frac{a}{n},\frac{\mu }{n})})=bn-am=1$ becomes the exact triangle
\begin{equation*}
\begin{CD}
\cdots E_{(0,\mu )} @>>> C(\psi ) @>>> E_{(1,\nu )} @> \psi \not=0 >> TE_{(0,\mu )} \cdots
\end{CD}
\end{equation*}
by considering the $SL(2;\mathbb{Z})$ action on $(T^2,\omega )$. In this section, we explain this fact by using the homological mirror symmetry of two tori, and check that $C(\psi ')\cong E_{(\frac{a+b}{m+n},\frac{\eta }{m+n})}$ if and only if $\eta \equiv \mu +\nu +\pi +\pi \tau \ (\mathrm{mod}\ 2\pi (\mathbb{Z}\oplus \tau \mathbb{Z}))$.

We explain the $SL(2;\mathbb{Z})$ action on $(T^2,\omega )$. For an element 
\begin{equation*}
\left( \begin{array}{ccc} g_{11}&g_{12}\\g_{21}&g_{22} \end{array} \right) \ (g_{11}, \ g_{12}, \ g_{21}, \ g_{22}\in \mathbb{Z}, \ g_{11}g_{22}-g_{12}g_{21}=1)
\end{equation*}
in $SL(2;\mathbb{Z})$, the $SL(2;\mathbb{Z})$ action on $\mathbb{R}^2$ is defined by
\begin{equation*}
\left( \begin{array}{ccc} x\\y \end{array} \right) \in \mathbb{R}^2 \longmapsto \left( \begin{array}{ccc} g_{11}&g_{12}\\g_{21}&g_{22} \end{array} \right) \left( \begin{array}{ccc} x\\y \end{array} \right) \in \mathbb{R}^2.
\end{equation*}
This action induces the $SL(2;\mathbb{Z})$ action on $T^2\cong \mathbb{R}^2/\mathbb{Z}^2$. By using the above matrix, we define an automorphism $\varphi : (T^2,\omega )\rightarrow (T^2,\omega )$ by
\begin{equation*}
\varphi \left( \begin{array}{ccc} x\\y \end{array} \right) = \left( \begin{array}{ccc} g_{11}&g_{12}\\g_{21}&g_{22} \end{array} \right) \left( \begin{array}{ccc} x\\y \end{array} \right).
\end{equation*}
Then, we can check easily that the automorphism $\varphi $ preserves the symplectic structure $\omega $, i.e., $\varphi ^*\omega =\omega $. Therefore, the automorphism $\varphi $ is a symplectic automorphism. Moreover, by using this symplectic automorphism $\varphi $, we can transform a pair $(L,\mathscr{E})$ of a Lagrangian submanifold $L$ in $(T^2,\omega )$ and a (flat) vector bundle $\mathscr{E}\rightarrow L$ as follows. Now $\varphi $ is invertible and $\varphi ^{-1}(L)$ is also a Lagrangian submanifold in $(T^2,\omega )$. Then, $\varphi $ induces a vector bundle $\varphi ^*\mathscr{E}\rightarrow \varphi ^{-1}(L)$. 
\begin{equation*}
\begin{CD}
\varphi ^* \mathscr{E} @>>> \mathscr{E}\\
@VVV                                 @VVV\\
\varphi ^{-1}(L) @>>\varphi > L\ .
\end{CD}
\end{equation*}
Therefore, by considering the symplectic automorphism $\varphi $, the pair $(L,\mathscr{E})$ is mapped to the pair $(\varphi ^{-1}(L),\varphi ^*\mathscr{E})$. 

Let us consider the matrix 
\begin{equation*}
\left(\begin{array}{ccc}n&m-n\\a&b-a\end{array}\right).
\end{equation*}
Since we assume $bn-am=1$, we see that this matrix is an element in $SL(2;\mathbb{Z})$. By using this matrix, we define a symplectic automorphism $\varphi : (T^2,\omega )\rightarrow (T^2,\omega )$ by 
\begin{equation*}
\varphi \left( \begin{array}{ccc}x\\y\end{array} \right) = \left( \begin{array}{ccc}n&m-n\\a&b-a\end{array} \right) \left( \begin{array}{ccc}x\\y\end{array} \right).
\end{equation*}
By using this symplectic automorphism $\varphi : (T^2,\omega )\rightarrow (T^2,\omega )$, the objects 
\begin{equation*}
s_{(\frac{a}{n},\frac{\mu }{n})}=(\mathcal{L}_{(\frac{a}{n},\frac{p}{n})},\mathscr{L}_{(\frac{a}{n},\frac{\mu }{n})},\nabla_{\mathscr{L}_{(\frac{a}{n},\frac{\mu }{n})}}),\ s_{(\frac{b}{m},\frac{\nu }{m})}=(\mathcal{L}_{(\frac{b}{m},\frac{s}{m})},\mathscr{L}_{(\frac{b}{m},\frac{\nu }{m})},\nabla_{\mathscr{L}_{(\frac{b}{m},\frac{\nu }{m})}})
\end{equation*}
of the Fukaya category Fuk($T^2,\omega $) are mapped to the objects 
\begin{equation*}
s_{(0,\mu )}=(\mathcal{L}_{(0,p)},\mathscr{L}_{(0,\mu )},\nabla_{\mathscr{L}_{(0,\mu )}}),\ s_{(1,\nu )}=(\mathcal{L}_{(1,s)},\mathscr{L}_{(1,\nu )},\nabla_{\mathscr{L}_{(1,\nu )}}),
\end{equation*}
respectively. Hence, in the triangulated category $Tr(\mathrm{Fuk}(T^2,\omega ))$ obtained by the $A_{\infty }$ category Fuk$(T^2,\omega )$, the exact triangle
\begin{equation*}
\begin{CD}
\cdots s_{(\frac{a}{n},\frac{\mu }{n})} @>>> C(\Psi ') @>>> s_{(\frac{b}{m},\frac{\nu }{m})} @> \Psi '\not=0 >> Ts_{(\frac{a}{n},\frac{\mu }{n})} \cdots
\end{CD}
\end{equation*}
becomes the exact triangle
\begin{equation*}
\begin{CD}
\cdots s_{(0,\mu )} @>>> C(\Psi ) @>>> s_{(1,\nu )} @> \Psi \not=0 >> Ts_{(0,\mu )} \cdots
\end{CD}
\end{equation*}
by this $SL(2;\mathbb{Z})$ action. Note that the complex structure of the mirror dual complex torus $\check{T}^2$ is also preserved when we consider the $SL(2;\mathbb{Z})$ action on $(T^2,\omega )$. Thus, by using the homological mirror symmetry $Tr(\mathrm{Fuk}(T^2,\omega ))\cong Tr(DG_{\check{T}^2})$ (see \cite{P}, \cite{P A}, \cite{abouzaid} etc.), the exact triangle
\begin{equation*}
\begin{CD}
\cdots E_{(\frac{a}{n},\frac{\mu }{n})} @>>> C(\psi ') @>>> E_{(\frac{b}{m},\frac{\nu }{m})} @>\psi '\not=0 >> TE_{(\frac{a}{n},\frac{\mu }{n})} \cdots 
\end{CD}
\end{equation*}
becomes the exact triangle
\begin{equation*}
\begin{CD}
\cdots E_{(0,\mu )} @>>> C(\psi ) @>>> E_{(1,\nu )} @>\psi \not=0 >> TE_{(0,\mu )} \cdots 
\end{CD}
\end{equation*}
in $Tr(DG_{\check{T}^2})$. This fact implies that $C(\psi ')\cong E_{(\frac{a+b}{m+n},\frac{\eta }{m+n})}$ if and only if $\eta \equiv \mu +\nu +\pi +\pi \tau \ (\mathrm{mod}\ 2\pi (\mathbb{Z}\oplus \tau \mathbb{Z}))$. 

\section*{Acknowledgment}
I would like to thank Hiroshige Kajiura for various advices in writing this paper. I am also grateful to referees for useful suggestions.

\end{document}